\newtheorem{thm}{Theorem}[subsection]
\newtheorem{cor}[thm]{Corollary}
\newtheorem{prop}[thm]{Proposition}
\newtheorem{lem}[thm]{Lemma}
\newtheorem{conj}[thm]{Conjecture}
\theoremstyle{definition}
\newtheorem{defn}[thm]{Definition}
\theoremstyle{remark}
\newtheorem{rem}[thm]{Remark}
\let\c@equation\c@thm
\numberwithin{equation}{section}
\begin{document}

\title{Fields of Rationality of Automorphic Representations: the Case of Unitary Groups}

\author{John Binder}
\email{binderj@math.mit.edu}
\address{
	Department of Mathematics\\Massachusetts Institute of Technology\\77 Massachusetts Avenue\\Cambridge, MA, USA}
	

\newcommand{\NN}{\mathscr{N}}
\newcommand{\CC}{\mathbb{C}}
\newcommand{\DDD}{\mathscr{D}}
\newcommand{\HH}{\mathcal{H}}
\newcommand{\RR}{\mathbb{R}}
\newcommand{\RRR}{\mathscr{R}}
\newcommand{\FF}{\mathbb{F}}
\newcommand{\KK}{\mathscr{K}}
\newcommand{\UU}{\mathscr{U}}
\newcommand{\II}{\mathscr{I}}
\newcommand{\EE}{\mathscr{E}}
\newcommand{\GG}{\mathscr{G}}
\newcommand{\PZ}{\mathbb{P}_{\math    bb{Z}}}
\newcommand{\ZZ}{\mathbb{Z}}
\newcommand{\PP}{\mathscr{P}}
\newcommand{\PPP}{\mathscr{P}}
\newcommand{\SSS}{\mathcal{S}}
\newcommand{\LL}{\mathcal{L}}
\newcommand{\MM}{\mathscr{M}}
\newcommand{\AAA}{\mathbb{A}}
\newcommand{\GGG}{\mathscr{G}}
\newcommand{\AAAA}{\AAA}
\newcommand{\Df}{\mathcal{D}_F}
\newcommand{\QQ}{\mathbb{Q}}
\newcommand{\QQQ}{\mathscr{Q}}
\newcommand{\DD}{\mathscr{D}}
\newcommand{\OO}{\mathcal{O}}
\newcommand{\VV}{\mathscr{V}}
\newcommand{\pp}{\mathfrak{p}}
\newcommand{\qq}{\mathfrak{q}}
\newcommand{\faa}{\mathfrak{a}}
\newcommand{\mm}{\mathfrak{m}}
\newcommand{\IIII}{\mathcal{I}}
\newcommand{\JJ}{\mathscr{J}}
\newcommand{\weak}{\rightharpoonup}
\newcommand{\weaks}{\rightharpoonup^*}
\newcommand{\simga}{\sigma}
\newcommand{\linf}[2]{\left\langle {#1},\, {#2}\right\rangle}
\newcommand{\into}{\hookrightarrow}
\newcommand{\im}{\text{im}}
\newcommand{\lists}[3]{{#1}_1{#2}\ldots {#2}{#1}_{#3}}
\newcommand{\qr}[2]{\left(\frac{#1}{#2}\right)}
\newcolumntype{M}{>{$}c<{$}}
\newcommand{\alg}{\text{alg}}
\newcommand{\Spec}{\mathop{\mathrm{Spec}}\nolimits}
\newcommand{\Proj}{\mathop{\mathrm{Proj}}\nolimits}
\newcommand{\Tr}{\mathop{\mathrm{Tr}}\nolimits}
\newcommand{\Hom}{\mathop{\mathrm{Hom}}\nolimits}
\newcommand{\spa}{\mathop{\mathrm{sp}}\nolimits}
\newcommand{\rank}{\mathop{\mathrm{rank}}\nolimits}
\newcommand{\Pic}{\mathop{\mathrm{Pic}}\nolimits}
\newcommand{\image}{\mathop{\mathrm{Im}}\nolimits}
\newcommand{\tors}{\mathop{\mathrm{tors}}\nolimits}
\newcommand{\ord}{\mathop{\mathrm{ord}}\nolimits}
\newcommand{\Imag}{\mathop{\mathrm{Im}}\nolimits}
\newcommand{\trdeg}{\mathop{\mathrm{trdeg}}\nolimits}
\newcommand{\codim}{\mathop{\mathrm{codim}}\nolimits}
\newcommand{\hei}{\mathop{\mathrm{ht}}\nolimits}
\newcommand{\sgn}{\mathop{\mathrm{sgn}}\nolimits}
\newcommand{\Gal}{\mathop{\mathrm{Gal}}\nolimits}
\newcommand{\supp}{\mathop{\mathrm{supp}}\nolimits}
\newcommand{\Cl}{\mathop{\mathrm{Cl}}\nolimits}
\newcommand{\CaCl}{\mathop{\mathrm{Ca\,Cl}}\nolimits}
\newcommand{\Div}{\mathop{\mathrm{Div}}\nolimits}
\newcommand{\Sym}{\mathop{\mathrm{Sym}}\nolimits}
\newcommand{\coker}{\mathop{\mathrm{coker}}\nolimits}
\newcommand{\imag}{\mathop{\mathrm{im}}\nolimits}
\newcommand{\End}{\mathop{\mathrm{End}}\nolimits}
\newcommand{\Frob}{\mathop{\mathrm{Frob}}\nolimits}
\newcommand{\Ann}{\mathop{\mathrm{Ann}}\nolimits}
\newcommand{\Art}{\mathop{\mathrm{Art}}\nolimits}
\newcommand{\rec}{\mathop{\mathrm{rec}}\nolimits}
\newcommand{\Aut}{\mathop{\mathrm{Aut}}\nolimits}
\newcommand{\Ad}{\mathop{\mathrm{Ad}}\nolimits}
\newcommand{\nr}{\mathop{\mathrm{nr}}\nolimits}
\newcommand{\cond}{\mathop{\mathrm{cond}}\nolimits}

\newcommand{\Ind}{\mathop{\mathrm{Ind}}\nolimits}
\newcommand{\spann}{\mathop{\mathrm{span}}\nolimits}
\newcommand{\Vol}{\mathop{\mathrm{Vol}}\nolimits}
\newcommand{\Irr}{\mathop{\mathrm{Irr}}\nolimits}
\newcommand{\Res}{\mathop{\mathrm{Res}}\nolimits}
\newcommand{\genus}{\mathop{\mathrm{genus}}\nolimits}
\newcommand{\scusp}{\mathop{\mathrm{scusp}}\nolimits}
\newcommand{\PProj}{\mathbb{P}\mathop{\mathrm{roj}}\nolimits}
\newcommand{\cones}[3]{\langle v_{#1},\, v_{#2},\, v_{#3}\rangle}
\newcommand{\Rsheaf}{\mathscr{R}}
\newcommand{\Qsheaf}{\mathscr{Q}}
\newcommand{\Ksheaf}{\mathscr{K}}
\newcommand{\Hsheaf}{\mathscr{H}}
\newcommand{\Msheaf}{\mathscr{M}}
\newcommand{\Rei}{\mathcal{R}}
\newcommand{\Rie}{\Rei}
\newcommand{\hol}{\text{hol}}
\newcommand{\Nsheaf}{\mathscr{N}}
\newcommand{\unr}{\text{unr}}
\newcommand{\sHom}{\mathcal{H}om}
\newcommand{\smallmat}[4]{\left(\begin{smallmatrix}{#1} & {#2} \\ {#3} & {#4} \end{smallmatrix} \right)}
\newcommand{\twomat}[4]{\begin{pmatrix}{#1} & {#2} \\ {#3} & {#4} \end{pmatrix}}
\newcommand{\Proh}{\Proj}
\newcommand{\jj}{\mathfrak{j}}
\newcommand{\old}{\text{old}}
\newcommand{\bs}{\backslash}
\newcommand{\diam}[1]{\langle {#1} \rangle}
\newcommand{\Alg}{\textbf{Alg\,}}
\newcommand{\BB}{\mathcal{B}}
\newcommand{\Detla}{\Delta}
\newcommand{\iso}{\xrightarrow{\sim}}
\newcommand{\dep}{\mathop{\mathrm{dep}}\nolimits}
\newcommand{\ind}{\mathop{\mathrm{ind}}\nolimits}
\newcommand{\vol}{\mathop{\mathrm{vol}}\nolimits}
\newcommand{\tr}{\mathop{\mathrm{tr}}\nolimits}
\newcommand{\Stab}{\mathop{\mathrm{Stab}}\nolimits}
\newcommand{\St}{\mathop{\mathrm{St}}\nolimits}
\newcommand{\meas}{\mathop{\mathrm{meas}}\nolimits}
\newcommand{\disc}{\mathop{\mathrm{disc}}\nolimits}
\newcommand{\cusp}{\mathop{\mathrm{cusp}}\nolimits}
\newcommand{\spec}{\mathop{\mathrm{spec}}\nolimits}
\newcommand{\geom}{\mathop{\mathrm{geom}}\nolimits}
\newcommand{\pl}{\mathop{\mathrm{pl}}\nolimits}
\newcommand{\new}{\mathop{\mathrm{new}}\nolimits}

\newcommand{\Lie}{\mathop{\mathrm{Lie}}\nolimits}

\newcommand{\trace}{\mathop{\mathrm{trace}}\nolimits}
\newcommand{\mupl}{\widehat \mu^{\mathop{\mathrm{pl}}}}
\newcommand{\nupl}{\widehat \nu^{\mathop{\mathrm{pl}}}}
\newcommand{\mucusp}{\widehat \mu^{\cusp}}
\newcommand{\mudisc}{\widehat \mu^{\disc}}
\newcommand{\GL}{\mathop{\mathrm{GL}}\nolimits}
\newcommand{\PGL}{\mathop{\mathrm{PGL}}\nolimits}
\newcommand{\SL}{\mathop{\mathrm{SL}}\nolimits}
\newcommand{\EP}{\mathop{\mathrm{EP}}\nolimits}
\newcommand{\Sp}{\mathop{\mathrm{Sp}}\nolimits}
\newcommand{\ad}{\mathop{\mathrm{ad}}\nolimits}
\newcommand{\reg}{\mathop{\mathrm{reg}}\nolimits}

\newcommand{\one}{\mathbf{1}}

\newcommand{\n}{\mathfrak{n}}
\newcommand{\nn}{\mathfrak{n}}
\newcommand{\ff}{\mathfrak{f}}
\newcommand{\oo}{\mathfrak{o}}
\newcommand{\ft}{\mathfrak{t}}
\newcommand{\dd}{\mathfrak{d}}
\newcommand{\fa}{\mathfrak{a}}
\newcommand{\XX}{\mathfrak{X}}
\newcommand{\fg}{\mathfrak{g}}

\newcommand{\wh}{\widehat}
\newcommand{\Fam}{\mathcal{F}}
\newcommand{\K}{\mathbf{K}}

\begin{abstract} This paper examines fields of rationality in families of cuspidal automorphic representations of unitary groups.  Specifically, for a fixed $A$ and a sufficiently large family $\Fam$, a small proportion of representations $\pi\in \Fam$ will satisfy $[\QQ(\pi):\QQ] \leq A$.  Like earlier work of Shin and Templier, the result depends on a Plancherel equidistribution result for the local components of representations in families.  An innovation of our work is an upper bound on the number of discrete series $\GL_n(L)$ representations with small field of rationality, counted with appropriate multiplicity, which in turn depends upon an asymptotic character expansion of Murnaghan and formal degree computations of Aubert and Plymen.
\end{abstract}

\maketitle

\section{Introduction}

Let $F$ be a totally real field and $E/F$ be a totally imaginary quadratic extension. For $n \geq 2$, let $U_{E/F}(n)$ denote the quasi-split unitary group in $n$ variables corresponding to $E/F$.  It is our goal to examine the fields of rationality of cuspidal automorphic $U(n)$ representations in families.

We start with a definition, following \cite{Wal85}:
\begin{defn} Let $G$ be a topological group, let $V$ be a complex vector space and $\pi: G\to\Aut_\CC(V)$ be a smooth $G$ representation (so every $v\in V$ has an open stabilizer).

Given $\sigma \in \Aut(\CC)$, let $V_\sigma$ be a complex vector space equipped with a $\sigma$-linear isomorphism let $t_\sigma: V \to V_\sigma$ (so that $t_\sigma(cv) = \sigma(c) t_\sigma(v)$).  We define $^\sigma\pi: G \to \Aut(V_\sigma)$ via $^\sigma \pi(g) = t_\sigma \circ \pi(g) \circ t_\sigma^{-1}$.

Let
$$\Stab(\pi) = \{\sigma\in \Aut(\CC): \,^\sigma \pi \cong \pi\}.$$
The \emph{field of rationality} $\QQ(\pi)$ is the subfield of $\CC$ fixed by $\Stab(\pi)$.

If $G/F$ is a connected reductive group and $\pi$ is an automorphic $G(\AAA_F)$ representation, the field of rationality $\QQ(\pi)$ is the compositum of the fields $\QQ(\pi_\pp)$ over the finite places $\pp$ of $F$.
\end{defn}

The question of fields of rationality in families was first studied by Serre in \cite{Ser97} in the language of classical cusp forms.  Specifically, let $k \geq 2$ be an even weight, let $N \geq 1$ be a level, and define $B_k(\Gamma_0(N))$ to be the standard basis of Hecke eigenforms of weight $k$ and level $\Gamma_0(N)$ (the existence of such a `standard basis' is guaranteed by \cite{AL70}).  If $f \in B_k(\Gamma_0(N))$, then $f$ has a \emph{$q$-expansion} 
$$f(z) = q + a_2q^2 + a_3q^3 + \ldots \,\,\,\,\,\,\,\,\, \text{for } q = e^{2\pi i z}.$$

We define the \emph{field of rationality} $\QQ(f) = \QQ(a_2,\, a_3,\, \ldots)$; this is a finite extension of $\QQ$.  Serre proved:
\begin{thm} For $A \geq 1$, let
$$B_k^{\leq A}(\Gamma_0(N)) =  \big\{f\in B_k(\Gamma_0(N)) \,\big| \,[\QQ(f):\QQ] \leq A\big\}.$$

Fix an auxiliary prime $p_0$ and let $\{N_\lambda\}$ be a sequence of levels such that $(N_\lambda,\, p_0) = 1$ and $N_\lambda \to \infty$.  Then
$$\lim_{\lambda \to \infty} 
	\frac{|B_k^{\leq A}(\Gamma_0(N_\lambda))|} 
		{|B_k(\Gamma_0(N_\lambda))|}
	= 0.$$
\end{thm}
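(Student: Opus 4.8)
The plan is to work entirely with the single auxiliary prime $p_0$. The key point is that if $[\QQ(f):\QQ]\le A$ then the $p_0$-th Hecke eigenvalue $a_{p_0}(f)$ can take only finitely many values, with a level-independent bound on the number of possibilities, and a vertical equidistribution theorem then forces such forms to be asymptotically negligible. Concretely, $S_k(\Gamma_0(N))$ has a basis of forms with integral $q$-expansions, so $\Aut(\CC)$ acts on it and permutes the eigenbasis $B_k(\Gamma_0(N))$, with $a_n({}^\sigma f)=\sigma(a_n(f))$; hence $\QQ(a_{p_0}(f))\subseteq\QQ(f)$, so $a_{p_0}(f)$ is an algebraic integer of degree at most $A$, and each of its Galois conjugates has the form $a_{p_0}({}^\sigma f)$ with ${}^\sigma f\in B_k(\Gamma_0(N))$, hence absolute value at most $2p_0^{(k-1)/2}$ by Deligne's bound. (A soft bound of size $O_k(p_0^{k/2})$, uniform in $N$, would serve equally well.) Counting monic integer polynomials of degree $\le A$ all of whose roots lie in a fixed disc, we conclude that $a_{p_0}(f)$ lies in a finite set $S=S(p_0,k,A)$ of cardinality independent of $N$; equivalently the normalized eigenvalue $a_{p_0}(f)\,p_0^{-(k-1)/2}$ lies in a finite set $S'\subseteq[-2,2]$.

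Next I would invoke Serre's equidistribution theorem for Hecke eigenvalues (see \cite{Ser97}): with $k$ fixed and $N\to\infty$ subject to $(N,p_0)=1$, the multiset $\{\,a_{p_0}(f)\,p_0^{-(k-1)/2}:f\in B_k(\Gamma_0(N))\,\}$ equidistributes in $[-2,2]$ with respect to the $p_0$-adic Plancherel measure
$$\mu_{p_0}\;=\;\frac{p_0+1}{(p_0^{1/2}+p_0^{-1/2})^2-x^2}\cdot\frac{1}{\pi}\sqrt{1-\tfrac{x^2}{4}}\;dx,$$
which is established by using the Eichler--Selberg trace formula to compute $\tr\big(T_{p_0}^m\mid S_k(\Gamma_0(N))\big)$ for all $m\ge 0$ and letting $N\to\infty$; the only features used below are that $\mu_{p_0}$ is a probability measure and that it has no atoms. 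In particular $|B_k(\Gamma_0(N))|\to\infty$.

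To finish, take $p=p_0$, which is legitimate exactly because $(N_\lambda,p_0)=1$, so that $p_0$ is unramified for every level in the family. Since $S'$ is a finite, hence $\mu_{p_0}$-null, subset of $[-2,2]$, the equidistribution statement yields
$$\lim_{\lambda\to\infty}\frac{\#\{\,f\in B_k(\Gamma_0(N_\lambda)):a_{p_0}(f)\,p_0^{-(k-1)/2}\in S'\,\}}{|B_k(\Gamma_0(N_\lambda))|}\;=\;\mu_{p_0}(S')\;=\;0.$$
Because $B_k^{\le A}(\Gamma_0(N_\lambda))$ is contained in the set counted in the numerator, the stated limit follows.

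I expect the only genuinely nontrivial step to be the equidistribution theorem itself; everything else is elementary bookkeeping with algebraic integers. It is precisely this two-part structure --- a Plancherel-type equidistribution statement for the local components, together with a level-independent finiteness bound on the local parameters of small degree --- that the remainder of the paper transplants to unitary groups, and there the finiteness bound, now a count of discrete series representations of $\GL_n$ with small field of rationality (weighted by formal degree), is the delicate point.
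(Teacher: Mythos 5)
Your proof is correct and is essentially Serre's original argument, which the paper simply cites from \cite{Ser97} rather than reproving: a level-uniform finiteness bound on the possible values of $a_{p_0}(f)$ when $[\QQ(f):\QQ]\le A$ (degree $\le A$ algebraic integers with all conjugates bounded via Deligne, hence a finite set independent of $N$), combined with vertical equidistribution of the normalized $T_{p_0}$-eigenvalues with respect to the atomless measure $\mu_{p_0}$, proved through the Eichler--Selberg trace formula. This is also precisely the two-step template --- a Plancherel-type equidistribution statement plus a finiteness bound on local data with small field of rationality --- that the paper transplants to unitary groups, as you note.
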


It is enlightening to reformulate the problem in terms of cuspidal automorphic representations.  There is classical construction associating to each holomorphic Hecke eigenform $f$ a cuspidal automorphic $\GL_2(\AAA)$ representation $\pi_f$.  If $f$ is a form of level $\Gamma_0(N)$, then $\pi_f$ has trivial central character and a nonzero $\Gamma_0(N)$-fixed vector; the fixed vector is unique (up to scalar multiple) if $f$ is a newform of level $\Gamma_0(N)$. The weight $k$ of 
$f$ determines the infinite component $\pi_{f,\infty}$.  Finally, it follows from strong multiplicity one that $\QQ(f) = \QQ(\pi_f)$.  

Serre posited that his theorem could be extended to arbitrary sequences of levels; this was completed by the author in \cite{Bin15}, following work of Shin and Templier in \cite{ST13}.  In their paper, Shin and Templier considered fields of rationality of representations of classical groups in families more generally.  In particular, they considered representations of classical groups in \emph{level families}.  Given a classical group $G$ over a totally real field $F$, let $\xi$ be an irreducible, algebraic, finite-dimensional $G(F_\infty) = G(F\otimes_{\QQ} \RR)$ representation.  Assume for simplicity that the highest weight of $\xi$ is regular. If $\nn$ is an ideal of $F$, let $\Gamma(\nn) \leq G(\AAA_F^\infty)$ denote the principal congruence subgroup of level $\nn$.  Let $\Fam(\xi,\, \Gamma(\nn))$ denote the family of cuspidal automorphic representations $\pi$ such that $\pi_{\infty}$ is $\xi$-cohomological and $\pi^{\infty}$ has a $\Gamma(\nn)$-fixed vector. For technical reasons the representations in $\Fam$ are counted `with multiplicity'; we invite the reader to see (\ref{multiplicity}) for the definition.  Let $\Fam^{\leq A}$ denote the subfamily consisting of those representations $\pi$ such that $[\QQ(\pi):\QQ] \leq A$.  Then Shin and Templier prove:
\begin{thm} Assume $G/F$ is an orthogonal, unitary, or symplectic group.  Let $\nn_\lambda$ be a sequence of levels with $N(\nn_\lambda) \to \infty$.  Let $\pp_0$ be a finite place of $F$ such that either:
\begin{enumerate}[(i)]
	\item $\nn_\lambda$ is coprime to $\pp_0$ for all $\lambda$, or
	\item $\ord_{\pp_0}(\nn_\lambda) \to \infty$ as $\lambda \to \infty$.
\end{enumerate}
Then $|\Fam^{\leq A}(\xi,\,\Gamma(\nn_\lambda))|/|\Fam(\xi,\,\Gamma(\nn_\lambda))| \to 0$ as $\lambda \to \infty$.
\end{thm}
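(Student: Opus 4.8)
The plan is to reduce the statement to a purely local problem at the distinguished place $\pp_0$ and then to invoke an automorphic Plancherel equidistribution theorem together with a counting bound for local representations of small field of rationality. Since $\QQ(\pi)$ is the compositum of the $\QQ(\pi_\pp)$ over finite places, we have $\QQ(\pi_{\pp_0}) \subseteq \QQ(\pi)$; thus if $\pi \in \Fam^{\leq A}(\xi, \Gamma(\nn_\lambda))$ then $[\QQ(\pi_{\pp_0}):\QQ] \leq A$, and it suffices to bound $\#\{\pi \in \Fam(\xi, \Gamma(\nn_\lambda)) : [\QQ(\pi_{\pp_0}):\QQ] \leq A\}$, counted with the appropriate multiplicity. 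Writing $m_\lambda = \ord_{\pp_0}(\nn_\lambda)$ and stratifying by the isomorphism class of $\pi_{\pp_0}$, this count equals $\sum_{\pi_0} N_\lambda(\pi_0)$, where $\pi_0$ runs over isomorphism classes of irreducible admissible $G(F_{\pp_0})$-representations with $[\QQ(\pi_0):\QQ] \leq A$ and a nonzero $\Gamma(\pp_0^{m_\lambda})$-fixed vector, and $N_\lambda(\pi_0) = \#\{\pi \in \Fam : \pi_{\pp_0} \cong \pi_0\}$.

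In case (i), $m_\lambda = 0$ for all $\lambda$, so every $\pi_{\pp_0}$ is unramified. Unitarity of $\pi_{\pp_0}$ together with the constraint that $\pi_\infty$ is $\xi$-cohomological confines its Satake parameter to a fixed compact subset of the unramified dual independent of $\lambda$; realizing $\pi$ in the cohomology with $\xi$-coefficients shows the relevant Hecke eigenvalues are algebraic integers. An algebraic integer of degree $\leq A$ and bounded absolute value lies in a finite set, so the $\pi_0$ that occur lie in a fixed finite set $S = S(\xi, \pp_0, A)$. By the automorphic Plancherel equidistribution theorem, the measures $|\Fam(\xi, \Gamma(\nn_\lambda))|^{-1}\sum_{\pi \in \Fam}\delta_{\pi_{\pp_0}}$ converge weakly to the Plancherel measure on the unramified unitary dual, which has no atoms; hence $N_\lambda(\pi_0) = o(|\Fam(\xi, \Gamma(\nn_\lambda))|)$ for each $\pi_0 \in S$, and summing over the finite set $S$ proves the claim.

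In case (ii), $m_\lambda \to \infty$, so the set of possible $\pi_0$ grows and a quantitative argument is needed. Running the trace formula with test function at $\pp_0$ equal to the normalized idempotent of $\Gamma(\pp_0^{m_\lambda})$, the distribution of $\pi_{\pp_0}$ weighted by $\dim \pi_{\pp_0}^{\Gamma(\pp_0^{m_\lambda})}$ agrees, up to an error negligible against the total mass, with the level-$\pp_0^{m_\lambda}$ Plancherel probability measure $\widehat\mu^{\pl}_{\pp_0, m} = \vol(\Gamma(\pp_0^{m})) \sum_{\pi_0} \dim(\pi_0^{\Gamma(\pp_0^{m})})\, d(\pi_0)\, \delta_{\pi_0}$, with $d(\pi_0)$ the Plancherel density (the formal degree for discrete series). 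Consequently
\[
\sum_{\pi_0} N_\lambda(\pi_0) \ll |\Fam(\xi, \Gamma(\nn_\lambda))| \cdot \Big(\widehat\mu^{\pl}_{\pp_0, m_\lambda}\big(\{\pi_0 : [\QQ(\pi_0):\QQ]\leq A\}\big) + o(1)\Big),
\]
and it remains to prove that $\widehat\mu^{\pl}_{\pp_0, m}(\{\pi_0 : [\QQ(\pi_0):\QQ] \leq A\}) \to 0$ as $m \to \infty$. The dominant contribution is from discrete series, for which one bounds $\dim \pi_0^{\Gamma(\pp_0^m)}$ and $d(\pi_0)$ in terms of the conductor and shows that, at each fixed conductor, only few discrete series have $[\QQ(\pi_0):\QQ] \leq A$: small field of rationality forces $\pi_0$ to be $\sigma$-fixed for a large subgroup of $\Aut(\CC)$, constraining its central and infinitesimal character and its parametrization. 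Since $\sum_{\pi_0}\dim(\pi_0^{\Gamma(\pp_0^m)})d(\pi_0) = \vol(\Gamma(\pp_0^m))^{-1}$ grows on the order of $N(\pp_0)^{m\dim G}$ while the small-field subsum grows strictly slower, the limit follows.

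The main obstacle is the local estimate in case (ii): showing that ramified representations --- principally discrete series --- of $G(F_{\pp_0})$ with growing conductor and bounded-degree field of rationality are negligible when weighted by $\dim(\text{fixed vectors})\times(\text{formal degree})$. This requires, in tandem, sharp lower bounds on the total weighted mass (from the Plancherel formula and formal-degree asymptotics) and upper bounds on the subsum over small field of rationality, the latter resting on an analysis of how $\Aut(\CC)$ permutes such representations --- exactly where the asymptotic character expansions and the formal-degree computations mentioned in the abstract enter. Establishing the automorphic Plancherel equidistribution itself, with the uniform control of orbital integrals and of the archimedean test functions required by the trace formula, is also substantial but available in the literature.
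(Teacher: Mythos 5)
Your case (i) argument is essentially the standard one: a nonzero $\K_{\pp_0}$-fixed vector plus $[\QQ(\pi_{\pp_0}):\QQ]\le A$ and the archimedean constraint confine $\pi_{\pp_0}$ to a finite set of spherical (hence non-square-integrable) representations, each of Plancherel measure zero, and Plancherel equidistribution at the fixed place $\pp_0$ then gives the claim. That matches the route in \cite{ST13} and its echo in Section 4 here.

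The gap is in case (ii). You reduce it to the local assertion that the level-$\pp_0^m$ Plancherel mass (formal degree times $\dim\pi_0^{\Gamma(\pp_0^m)}$) carried by representations with $[\QQ(\pi_0):\QQ]\le A$ is $o\bigl(\vol(\Gamma(\pp_0^m))^{-1}\bigr)$ as $m\to\infty$, and you explicitly leave this as ``the main obstacle.'' But that assertion is not an available ingredient: it is precisely the content of Proposition \ref{DiscreteSeriesSmallCor}, which is established in this paper only for $\GL_n$ (hence only at split places of unitary groups), via Murnaghan's asymptotic character expansions and the Aubert--Plymen formal degree formulas, and which for orthogonal and symplectic groups --- covered by the theorem you are proving --- is posed here only as a conjecture (Section 7). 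So as written, your case (ii) rests on an unproved, and in this generality open, local input. The theorem does not need it: the actual argument (Theorem 6.1(ii) of \cite{ST13}, reproduced with a correction as Proposition \ref{LargePPower}) first notes that the set of potentially $\xi$-cohomological $\pi_{\pp_0}$ with $[\QQ(\pi_{\pp_0}):\QQ]\le A$ is \emph{finite independently of the level} (Proposition \ref{Finiteness}), contrary to your remark that ``the set of possible $\pi_0$ grows''; then, for each fixed $\pi_0$ in this finite set, the Harish-Chandra local character expansion gives $\dim\pi_0^{\Gamma(\pp_0^m)}\sim C_{\pi_0}q^{d_{\pi_0}m}$ with $d_{\pi_0}$ bounded by the maximal dimension of a nilpotent orbit, which is strictly smaller than $\dim G$, while the trace formula with an Euler--Poincar\'e function at infinity gives $|\Fam(\xi,\Gamma(\nn_\lambda))|\gg\vol(\Gamma(\nn_\lambda))^{-1}$, growing like $q^{m\dim G}$ in the $\pp_0$-aspect. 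The ratio therefore tends to zero with no counting of small-field discrete series at growing conductor. A secondary problem with your reduction: comparing the family with $\widehat\mu^{\pl}_{\pp_0,m_\lambda}$ ``up to an error negligible against the total mass'' requires the equidistribution error to be uniform as the test function at $\pp_0$ varies with $\lambda$, which the fixed-$S$ Plancherel equidistribution theorem does not directly provide; the finiteness-plus-growth-rate argument sidesteps this uniformity issue as well.
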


It is our goal to prove an `unconditional' version of this theorem by removing the hypotheses (i) or (ii) in the case where $G$ is the unitary group $U_{E/F}(n)$.  We note one major distinction between our work and that of Shin-Templier: we will consider families $\Fam$ with fixed central character.  This distinction makes the problem more interesting: if we do not fix the central character, then the central characters $\chi_\pi$ of representations $\pi \in \Fam(\xi,\, \Gamma(\nn))$ are asymptotically equdistributed among all automorphic characters $\chi$ with conductor dividing $\nn$.  If $\nn$ is large, then many of these characters $\chi$ have large field of rationality, and one easily checks that $\QQ(\pi) \supseteq \QQ(\chi_\pi)$.  By fixing the central character, this argument fails and we instead make a deeper argument using $\pp$-adic representation theory following \cite{ST13}.

To state our theorem, let $G = U_{E/F}(n)$ for $n \geq 2$ and a CM extension $E/F$.  Also, fix the following data:
\begin{itemize}
	\item First, an irreducible, finite dimensional algebraic $G(F_\infty)$ representation $\xi$.  We assume the highest weight of $\xi$ is regular.
	\item Second, an automorphic character $\chi: Z(\AAA) \to \CC^\times$ with conductor $\ff$.  We assume $\chi_\infty$ is equal to the central character of $\xi$.
\end{itemize}

Our primary theorem is this:
\begin{thm} \label{MainThmIntro} Let $\nn$ be an ideal of $F$ divisible by $\ff$.

Given the above data, let 
$$\Fam(\xi,\,\chi,\, \Gamma(\nn))$$
denote the family of cuspidal automorphic $U(n,\,\AAA)$ representations $\pi$ such that $\pi_{\infty}$ is $\xi$-cohomological, $\chi_\pi = \chi$, and such that $\pi^{\infty}$ has a $\Gamma(\nn)$-fixed vector, counted with the appropriate multiplicity.  Let $\Fam^{\leq A}(\xi,\, \chi,\, \Gamma(\nn))$ denote the subfamily of representations $\pi$ with $[\QQ(\pi):\QQ]$.

If $\{\nn_\lambda\}$ is a sequence of ideals, divisible by $\ff$, such that $N(\nn_\lambda) \to \infty$ as $\lambda\to\infty$, then
$$\lim_{\lambda \to \infty}
	 \frac{|\Fam^{\leq A}(\xi,\,\chi,\, \Gamma(\nn_\lambda))|}
		{|\Fam(\xi,\,\chi,\, \Gamma(\nn_\lambda))|}
	= 0.$$
\end{thm}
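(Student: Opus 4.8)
The plan is to follow the architecture of Shin--Templier \cite{ST13}, but with the central character fixed, and to supply the new ingredient — a uniform upper bound on the count of discrete series $\GL_n$-representations (and hence $U(n)$-representations via base change and endoscopy) with small field of rationality — that allows us to handle the case where $\nn_\lambda$ is divisible by a small power of a fixed prime $\pp_0$, which is exactly the case excluded by hypotheses (i),(ii) of Shin--Templier.

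First I would set up the counting. Using the automorphic Plancherel equidistribution theorem for the family $\Fam(\xi,\chi,\Gamma(\nn_\lambda))$ — which follows from the trace formula together with the limit-multiplicity techniques, adapted to the fixed-central-character setting — one expresses $|\Fam(\xi,\chi,\Gamma(\nn_\lambda))|$ asymptotically as a product of local volume factors, and more importantly one gets control on the distribution of the local components $\pi_{\pp}$ for each fixed place $\pp$. The point of fixing $\chi$ is that, as explained in the introduction, it blocks the cheap lower bound $\QQ(\pi)\supseteq\QQ(\chi_\pi)$; so the estimate on $|\Fam^{\leq A}|$ must instead come from bounding, place by place, how often $\pi_{\pp}$ can be a representation with small field of rationality. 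The key observation (already in \cite{ST13}) is that $[\QQ(\pi):\QQ]\le A$ forces $\QQ(\pi_{\pp})$ to have degree $\le A$ for every $\pp$, and that for $\pp\nmid\nn_\lambda$ the representation $\pi_{\pp}$ is unramified, so its field of rationality is controlled by the Satake parameter; summing the Plancherel measure of the set of unramified parameters with small field of rationality over a growing set of auxiliary primes $\pp$ gives decay, \emph{provided} $\nn_\lambda$ has enough prime factors outside any fixed finite set. The genuinely new case is when $\nn_\lambda$ is (up to bounded factors) supported on a fixed finite set of primes, so that all but finitely many $\pp$ are available as auxiliary primes but we must also rule out the possibility that the ramified local components at places dividing $\nn_\lambda$ conspire; here the fixed-central-character constraint plus the discrete-series bound below closes the argument.

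Second, and this is the heart of the paper, I would prove the promised upper bound: for a $p$-adic field $L$ and $A\ge 1$, the number of discrete series representations of $\GL_n(L)$ with a $\Gamma(\pp^m)$-fixed vector and $[\QQ(\tau):\QQ]\le A$, \emph{weighted by formal degree} (or by dimension of fixed vectors — the weighting that shows up on the spectral side of the trace formula), is $o$ of the total such weighted count as $m\to\infty$. The strategy is: (a) use the Bushnell--Kutzko / simple-types classification, or more efficiently the asymptotic character expansion of Murnaghan, to relate $[\QQ(\tau):\QQ]$ to arithmetic of the underlying inducing data; (b) use the formal-degree formula of Aubert--Plymen to see that discrete series of bounded field of rationality and bounded conductor have formal degree bounded below away from the generic size, hence contribute negligibly after normalization; (c) transfer from $\GL_n$ to $U(n)$ through stable base change, noting that base change is compatible with the $\Aut(\CC)$-action up to the (fixed!) automorphic character twist, so $[\QQ(\pi_{\pp}):\QQ]$ small implies $[\QQ(\mathrm{BC}(\pi_{\pp})):\QQ]$ small. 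Combining (a)--(c) yields: the proportion of the family $\Fam(\xi,\chi,\Gamma(\nn_\lambda))$ whose component at a place $\pp\mid\nn_\lambda$ with $\ord_\pp(\nn_\lambda)$ large has small field of rationality tends to $0$.

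Finally I would assemble the pieces. Given the sequence $\nn_\lambda$ with $N(\nn_\lambda)\to\infty$, either (after passing to a subsequence) there is a place $\pp_0$ with $\ord_{\pp_0}(\nn_\lambda)\to\infty$, in which case the discrete-series bound at $\pp_0$ gives the decay, or else $\ord_\pp(\nn_\lambda)$ stays bounded for every $\pp$ and $N(\nn_\lambda)\to\infty$ forces the number of primes dividing $\nn_\lambda$ to grow, in which case the unramified-places argument over auxiliary primes gives the decay; a diagonal argument interpolates between the two regimes. Either way $|\Fam^{\leq A}(\xi,\chi,\Gamma(\nn_\lambda))|/|\Fam(\xi,\chi,\Gamma(\nn_\lambda))|\to 0$. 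The main obstacle I anticipate is step (b)--(c) of the second paragraph: making the formal-degree and character-expansion estimates genuinely uniform in the conductor $m$ and in $n$, and checking that stable base change for $U(n)$ (available through the work of Mok and Kaletha--Minguez--Shin--White) interacts cleanly with the $\sigma\in\Aut(\CC)$-twisting so that fields of rationality are comparable on both sides after accounting for $\chi$; controlling the endoscopic error terms in the trace formula with fixed central character is a secondary technical point but should be routine given the stabilization already in the literature.
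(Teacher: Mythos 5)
The overall shape of your proposal (fixed-central-character Plancherel equidistribution plus a formal-degree-weighted bound on discrete series with small field of rationality) is the right one, but your final assembly has a genuine gap: the dichotomy you run on the sequence $\{\nn_\lambda\}$ is neither exhaustive nor does it reach the case the theorem is actually new for. A sequence such as $\nn_\lambda=\prod_{j\le\lambda}\pp_j$ (every prime eventually divides $\nn_\lambda$, always to exponent one) has no prime with $\ord_{\pp}(\nn_\lambda)\to\infty$ and no auxiliary prime coprime to cofinitely many $\nn_\lambda$; your second regime then has no unramified places to work with, and at a place dividing $\nn_\lambda$ exactly once the local component is only constrained to have a $\Gamma(\pp)$-fixed vector, so it can perfectly well be a depth-zero discrete series. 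With the central character fixed, such representations are \emph{atoms} of $\mupl_{\chi_\pp}$ with mass equal to their formal degree, so no Satake-parameter/measure-zero argument can dispose of them; this bounded-positive-exponent situation is precisely what Proposition \ref{DiscreteSeriesSmallCor} at a fixed small level is for, and your write-up never supplies an argument for it. (Your dichotomy is also not exhaustive as stated: $\nn_\lambda$ equal to a sequence of distinct primes of growing norm falls into neither bucket, though that case is easy.)

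Relatedly, your key local bound is stated in the wrong regime: you ask that the weighted count of small-rationality discrete series with a $\Gamma(\pp^m)$-fixed vector be $o$ of the total as $m\to\infty$, whereas what is needed is a bound valid for \emph{every} $r$ simultaneously (in particular $r=1$), with the saving coming from large residue characteristic: $p>nA$ forces $[\QQ(\pi):\QQ]\le A$ to imply depth zero; depth-zero discrete series with fixed central character are parametrized by characters of $\FF_{q^m}^\times$ in general position, most of which have large field of rationality; and Murnaghan's character expansion together with the Aubert--Plymen formal-degree formula show that all $\Sp(\pi',d)$ with $\pi'$ depth-zero supercuspidal share the same formal degree and the same $\dim\pi^{\Gamma(\pp^r)}$, which is what upgrades the counting statement to the weighted one. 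The growing-exponent regime is handled separately and more softly (Proposition \ref{LargePPower}, via the finiteness result Proposition \ref{Finiteness} and character-expansion growth of $\dim\pi_\pp^{\Gamma(\pp^r)}$), so your $m\to\infty$ statement is needed nowhere while the fixed-$r$ statement you actually need is left unproved. Finally, the detour through stable base change for $U(n)$ and its compatibility with the $\Aut(\CC)$-action is an unverified burden you can simply avoid: the paper chooses the auxiliary prime split (with $\chi_\pp$ unramified and $p$ large), so that $U(n,F_\pp)\cong\GL_n(F_\pp)$ and all local work takes place directly on $\GL_n$.
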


We briefly comment on our choice of algebraic group.  Over the course of the paper, we will reduce the proof to a careful analysis of the depth-zero discrete series representations of $U(n,\, F_\pp)$ for finite places $\pp$ of $F$.  When $E$ splits at $\pp$, we have $U(n,\, F_\pp) \cong \GL_n(F_\pp)$, and the representation-theoretic properties of the depth-zero discrete series $\GL_n(F_\pp)$ representations are very well-understood following, for instance, \cite{How77}, \cite{Rod82}, \cite{Mur03}.  It seems likely that similar results should hold for other classical groups, but we are currently unable to prove them.

Moreover, we have opted to discuss unitary groups, rather than other twists of general linear groups, because of complications at the Archimedean places.  In particular, $U(n, F_\infty)$ has an anisotropic maximal torus $T$ and this greatly simplifies the analysis.  Moreover if $\xi$ is a finite-dimensional irreducible algebraic $G(F_\infty)$ representation with regular highest weight, then any automorphic representation whose infinite component is $\xi$-cohomological will have finite field of rationality.  Therefore, our theorem is not vacuous.  (Because $\GL_2(F_\infty)$ has a maximal torus which is anisotropic modulo the center, the same holds for $\GL_2$.  The reader can everywhere replace $U(n)$ with $\GL_2$ and the results go through exactly as stated.)

This paper is organized as follows: In Section 2, we discuss some introductory materials, define our families of cuspidal automorphic representations, and state our main Theorem. In section 3, we give an equidistribution theorem for the local components of families of cuspidal automorphic representations.  In particular, the local components of our families are equidistributed according to the \emph{Plancherel measure}.  In section 4, we give a proof of the main theorem contingent upon a result about discrete series representations of $\GL_n(F_\pp)$ having small field of rationality (Proposition \ref{DiscreteSeriesSmallCor}).  Because the proof necessarily involves the representation theory of $\GL_n(F_\pp)$, we have opted to delay it until Sections 5 and 6.  In Section 5, we give explicit lower bounds on the degree of the field of rationality of $\GL_n(F_\pp)$ representations of positive depth and show that the representations with small field of rationality are not `too numerous.'  In Section 6, we compute the `multiplicities' of these representations and show that they compose a small proportion of the space when counted `with multiplicity', completing the proof of Proposition \ref{DiscreteSeriesSmallCor}.  Finally, in Section 7 we present some conjectures and results that may be useful for extending our results to other classical groups.

\subsection{Acknowledgements:} I am grateful to my advisor, Sug Woo Shin, for his continued support and interest in this project. Thanks also to Julee Kim and David Vogan for many helpful discussions about the complications at the non-Archimedean and Archimedean places respectively.

\section{Introductory Materials}

\subsection{Unitary groups and maximal special subgroups}
\label{UnitaryGroups}

In this section, we give a quick primer on the quasi-split unitary group $U_{E/F}(n)$.  Let $F$ be a totally real number field and $E/F$ a totally imaginary quadratic extension; then $E$ is a CM field and the nontrivial element in $\Gal(E/F)$ acts as complex conjugation for every embedding $E \into \CC$: we denote this automorphism $x \mapsto \overline x$.

Let $\Phi = \Phi_n$ denote the matrix with entries
$$\Phi_{ij} = 
	\begin{cases} (-1)^{i-1} & i + j = n + 1
	\\ 0 &\text{otherwise}\end{cases}
$$
and let $U_{E/F}(n, R)$ be the group of $g\in \GL_n(E \otimes R)$ with $g \Phi_n \overline g^t = \Phi_n$.  This defines a connected reductive group {over F}.  The algebraic subgroup of upper-triangular matrices is a Borel subgroup, so that $U(n)$ is {quasi-split} over $F$.  Moreover, $U_{E/F}(n)$ becomes isomorphic $\GL_n$ after base-changing to $E$.  Therefore, if $v$ is any place of $F$ such that $E$ splits at $v$, then $U_{E/F}(n, F_v) \cong \GL_n(F_v)$.  In this case, we say $U(n)$ splits at $v$.

Let $\pp$ be a finite prime of $F$.  If $U(n)$ splits at $\pp$ then $\K_\pp \cong \GL_n(\oo_{F,\pp})$ is a hyperspecial maximal compact subgroup of $\GL_n(F_\pp)$.  Otherwise, $U(n,\, F_\pp)$ has a maximal hyperspecial subgroup $\K_\pp$ whenever $E/F$ is unramified.  If $E/F$ is ramified, then $U(n,\, F_\pp)$ will only have a \emph{special} maximal compact subgroup $\K_\pp$.  In either case, there is a group scheme $\mathscr{G}$ over $\oo_{F,\pp}$ whose generic fiber is isomorphic to $U_n$, and $\K_\pp = \mathscr{G}(\oo_{F,\pp})$.  If $E/F$ is unramified, then the special fiber of $\mathscr{G}$ is a connected reductive group.

We invite the reader to see \cite{Tit79} for the definition of special and hyperspecial subgroups.  In particular, special (resp. hyperspecial) subgroups are the stabilizers of special (resp. hyperspecial) \emph{points} in the Bruhat-Tits building; these are defined in 1.9 (resp 1.10).  In the non-split case, we will not give an explicit description of the maximal special and hyperspecial subgroups of $U(n)$.  Rather, we refer the reader Section 3 of \cite{GHY01}.

\begin{defn} \label{FullLevelDef} Let $\pp$ be a prime of $F$, let $\K_\pp$ be a maximal special subgroup of $U_n(F_\pp)$, and let $\mathscr{G}$ be a group scheme over $\oo_{F,\pp}$ whose generic fiber is isomorphic to $U(n)$, with $\mathscr{G}(\oo_{F,\pp}) = \K_\pp$.  For $r > 0$, we define the \emph{principal congruence subgroups} $\Gamma(\pp^r)\leq \K_\pp$ as the kernel of the canonical map $\mathscr{G}(\oo_{F,\pp}) \to \mathscr{G}(\oo_{F,\pp}/\pp^r)$.

If $U(n)$ splits at $\pp$, we will assume $\K_\pp = \GL_n(\oo_{F,\pp})$. Then $\Gamma(\pp^r)$ is subgroup $1 + \pp^r M_n(\oo_{F,\pp})$.

If $\nn = \prod_{\pp} \pp^{r_\pp}$ we set
$$\Gamma(\nn) = \left(\prod_{\pp \mid \nn} \pp^{r_\pp}\right) \times \left(\prod_{\pp \nmid \nn} K_\pp\right);$$
this is an open compact subgroup of $G(\AAA^\infty)$.
\end{defn}

Note that this definition is equivalent to the definition given in \cite{ST12} (see page 65 of that paper).

\subsection{The tempered spectrum of $\GL_n(L)$}

Throughout, let $L$ be a $\pp$-adic field with ring of integers $\oo = \oo_L$.  All representations will be assumed to be admissible and unitary.

\begin{defn} Let $(\pi,\,V)$ be an admissible irreducible $\GL_n(L)$ representation and let $(\pi^*,\, V^*)$ be its contragredient representation.  For $v\in V,\, v^*\in V^*$, we define the \emph{matrix coefficient} $f_{v,v^*}: \GL_n(L) \to \CC$ via:
$$f_{v,v^*}(g) = \linf{v^*}{\pi(g) v}.$$

\begin{enumerate}[(a)]
	\item We say $\pi$ is \emph{supercuspidal} if its matrix coefficients are compactly-supported modulo the center $Z$.
	\item We say $\pi$ is a \emph{discrete series} representation if its matrix coefficients are in $L^2(\GL_n(L)/Z)$.
	\item We say $\pi$ is \emph{tempered} if its matrix coefficients are in $L^{2 + \epsilon}(\GL_n(L)/Z)$ for every $\epsilon > 0$.
\end{enumerate}
\end{defn}

In this section, we'll briefly describe the tempered spectrum of $G = \GL_n(L)$; let $Z$ denote the center of $G$.  Let $P_0$ denote the subgroup of consisting of the upper-triangular matrices; this is a minimal parabolic subgroup.  We say $P$ is a \emph{standard parabolic} subgroup if $P\geq P_0$. In this case, the Levi component $M$ of $P$ is a standard Levi subgroup.  If $\pi_M$ is an admissible unitary $M$ representation, we define $I_M^G\pi_M$ as follows: first, let $\delta_P$ be the modulus character of $M$ acting on $P$ and let $\pi_P$ denote pullback under $P \twoheadrightarrow M$ of $\pi_M \otimes \delta_P$, and then let $I_M^G \pi' = \Ind_P^G \pi'_P$.

Let $m = nd$ and let $\pi'$ be a unitary supercuspidal representation of $\GL_m(L)$.  Let $M$ be the standard Levi subgroup of $\GL_n(L)$ isomorphic to $\GL_m(L)^d$.  Let $\pi'_M$ denote the (external) tensor product
$$\left(\pi' \otimes|\det|^{\frac{1-d}{2}}\right) \otimes \left(\pi' \otimes|\det|^{\frac{3-d}{2}}\right) \otimes \ldots \otimes \left(\pi'\otimes |\det|^{\frac{d-1}{2}}\right).$$
Then 
\begin{lem}  \begin{enumerate}[(i)]
	\item $I_M^G \pi'_M$ has a unique irreducible quotient module, which we call $\Sp(\pi',\, d)$.
	\item $\Sp(\pi',\, d)$ is a discrete series $G$ representation.
	\item All discrete series representations of $G$ are isomorphic to $\Sp(\pi',\, d)$, for some $d\mid n$ and supercuspidal representation $\pi'$ of $\GL_{n/d}(L)$.
	\item $\Sp(\pi',\, d) \cong \Sp(\pi'',\, d')$ if and only if $\pi' \cong\pi''$ and $d = d'$.
\end{enumerate}
\end{lem}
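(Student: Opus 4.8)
The plan is to deduce all four claims from the Bernstein--Zelevinsky classification of square-integrable representations of $\GL_n(L)$; its three ingredients are the combinatorics of \emph{segments}, the geometric lemma for Jacquet modules of parabolically induced representations, and Casselman's square-integrability criterion. Write $\nu = |\det|_L$, and for a unitary supercuspidal $\rho$ of $\GL_m(L)$ and $d \geq 1$ let
$$\Delta(\rho,d) = \big(\rho\nu^{(1-d)/2},\ \rho\nu^{(3-d)/2},\ \ldots,\ \rho\nu^{(d-1)/2}\big)$$
be the associated \emph{segment}: with $m = n/d$ and $\rho = \pi'$, the representation $\pi'_M$ of the statement is exactly the external tensor product of the entries of $\Delta(\pi',d)$ listed in increasing order of exponent, so that $I_M^G \pi'_M$ is the segment-induced representation in that order. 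Let $r_M^G$ denote the normalized (exact) Jacquet functor and, for a Levi $M$, let $A_M$ be its maximal split central torus. Since the entries of $\Delta(\pi',d)$ are supercuspidal, all of their proper Jacquet modules vanish, and the geometric lemma collapses to
$$r_M^G\big(I_M^G\pi'_M\big)^{\mathrm{ss}} \;=\; \bigoplus_{w\in S_d} w\cdot\pi'_M,$$
the sum over the $d!$ orderings of $\Delta(\pi',d)$; this is multiplicity free because the entries of $\Delta(\pi',d)$ are pairwise non-isomorphic. In particular $[\pi'_M : r_M^G(I_M^G\pi'_M)] = 1$.

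\emph{Parts (i) and the injectivity half of (iv).} If $I_M^G\pi'_M$ had two non-isomorphic irreducible quotients $\sigma_1,\sigma_2$, it would surject onto $\sigma_1\oplus\sigma_2$; by Frobenius reciprocity $\pi'_M$ would embed into each $r_M^G\sigma_i$, so by exactness $[\pi'_M : r_M^G(I_M^G\pi'_M)] \geq 2$, contradicting the last display. Hence the irreducible quotient $\Sp(\pi',d)$ is unique, which is (i). For the injectivity in (iv): the supercuspidal support of an irreducible representation is an isomorphism invariant, and that of $\Sp(\pi',d)$ is the multiset underlying $\Delta(\pi',d)$ on $\GL_m(L)^d$. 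From this multiset one reads off the block size $m$, hence $d = n/m$, and then $\pi' = \big(\pi'\nu^{(1-d)/2}\big)\cdot\nu^{(d-1)/2}$ is recovered; so $\Sp(\pi',d)\cong\Sp(\pi'',d')$ forces $d=d'$ and $\pi'\cong\pi''$. The converse is immediate.

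\emph{Part (ii).} The Jacquet module of $\Sp(\pi',d)$ along the standard Levi of type $(ma_1,\ldots,ma_r)$, $\sum a_i = d$, is the ``initial consecutive grouping'' $\Sp(\pi'\nu^{b_1},a_1)\otimes\cdots\otimes\Sp(\pi'\nu^{b_r},a_r)$ for the evident shifts $b_i$; in particular along $M$ it is $\pi'_M$, with multiplicity one. Now invoke Casselman's criterion: $\Sp(\pi',d)$ — which has unitary central character, since $\pi'$ is unitary and the $\nu$-twists in $\Delta(\pi',d)$ are symmetric about $0$ — is a discrete series representation iff every $A_M$-exponent of every such Jacquet module (over all proper standard Levis) pairs strictly negatively with the fundamental coweights of $A_M$ in $G$. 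At the minimal such Levi the relevant exponent is the vector with $j$-th entry $(2j-1-d)/2$ for $j=1,\ldots,d$, and its pairings with the fundamental coweights are the partial sums $\sum_{j=1}^k \tfrac{2j-1-d}{2} = \tfrac{k(k-d)}{2} < 0$ for $1\leq k\leq d-1$; the inequalities at coarser Levis are sub-sums of these, hence also hold. Thus $\Sp(\pi',d)$ is a discrete series representation. This short computation is the one genuinely non-formal point of the argument.

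\emph{Part (iii), and the main obstacle.} Let $\sigma$ be an irreducible discrete series representation of $\GL_n(L)$ with supercuspidal support $\big(M' = \GL_{m_1}(L)\times\cdots\times\GL_{m_k}(L),\ \rho_1\otimes\cdots\otimes\rho_k\big)$, so $\sigma$ embeds in $I_{M'}^G\big(w\cdot(\rho_1\otimes\cdots\otimes\rho_k)\big)$ for a suitable ordering. The $A_M$-exponents of the Jacquet modules of $\sigma$ along proper standard Levis are, by the geometric lemma and exactness, among those of $\bigoplus_w w\cdot(\rho_1\otimes\cdots\otimes\rho_k)$, and Casselman's criterion forces all of them to be strictly negative; the standard combinatorial analysis of these inequalities — the ``support of a discrete series is a single segment'' step — then forces all $m_i$ to equal a common $m$, all $\rho_i$ to be unramified twists of one supercuspidal $\rho$ of $\GL_m(L)$, and the twisting exponents to form a string of $k$ consecutive values, symmetric about $0$ because the central character of $\sigma$ is unitary; thus the support of $\sigma$ coincides with that of $\Sp(\pi',d)$ for a unitary supercuspidal $\pi'$ of $\GL_m(L)$ with $d = k = n/m$. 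Finally, parts (i)--(ii) together with one further application of Casselman's criterion to the subquotients of $I_M^G\pi'_M$ (whose Jacquet modules the geometric lemma again describes) show that $\Sp(\pi',d)$ is the \emph{only} discrete series subquotient of $I_M^G\pi'_M$; since $\sigma$ is a discrete series subquotient of an induced representation with this same supercuspidal support, $\sigma\cong\Sp(\pi',d)$, which is (iii). The main obstacle throughout is the interplay between Casselman's criterion and the explicit Jacquet-module formulas used in (ii) and (iii); as this material is entirely classical, in the write-up one would either reproduce the (short) Casselman-criterion computations or simply cite the Bernstein--Zelevinsky classification and reserve the space for the genuinely new parts of the paper.
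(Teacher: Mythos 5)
Your outline is correct, but it takes a different (much heavier) route than the paper: the paper disposes of this lemma purely by citation --- (i) is Proposition 2.10 of Zelevinsky \cite{Zel80}, (ii) follows from Bernstein--Zelevinsky \cite{BZ77}, (iii) is Proposition 11 of Rodier \cite{Rod82}, and (iv) is the one-line supercuspidal-support argument that you also give --- whereas you reconstitute the proofs of those citations from the geometric lemma and Casselman's square-integrability criterion. What your version buys is self-containedness and an explicit record of the exponent computation $\sum_{j\leq k}\tfrac{2j-1-d}{2}=\tfrac{k(k-d)}{2}<0$; what it costs is that several steps you state as facts are themselves the content of the cited theorems and would need proof or citation anyway: the adjunction you invoke for quotients of $I_M^G\pi'_M$ is Bernstein's second adjunction (Jacquet functor for the \emph{opposite} parabolic), not plain Frobenius reciprocity, so the multiplicity-one count has to be transferred across the long Weyl element; the Jacquet-module formula for $\Sp(\pi',d)$ used in (ii) (``initial consecutive groupings,'' in particular $r_M^G\Sp(\pi',d)=\pi'_M$) requires an inductive argument and is not a consequence of the quotient property alone; and the ``single segment'' analysis in (iii), together with the claim that $\Sp(\pi',d)$ is the unique discrete-series subquotient of $I_M^G\pi'_M$, is exactly Rodier's/Zelevinsky's classification. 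Since you yourself note that in a write-up one would simply cite Bernstein--Zelevinsky, your proposal and the paper ultimately rest on the same classical results; the paper's choice is to cite them outright, and given that nothing new is needed here, that is the economical option.
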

\begin{proof} (i) is Proposition 2.10 of \cite{Zel80}.  (ii) follows from \cite{BZ77}. (iii) is Proposition 11 of \cite{Rod82}. (iv) follows because $\pi'$ is the unique unitary supercuspidal representation with an unramified twist occurring in the supercuspidal support of $\Sp(\pi',\, d)$.
\end{proof}

\begin{lem}\label{tempered} Let $\omega_i$ be a discrete series $\GL_{n_i}(L)$ representation for $i =1,\ldots,\, r$, with $n_1 + \ldots + n_r = n$.  Let $M$ denote the subgroup of block diagonal matrices isomorphic to $\prod_i \GL_{n_i}(L)$.  Let $M' = \prod_j \GL_{n'_j}(L)$. Then
\begin{enumerate}[(i)]
	\item $I_{M}^G (\omega_1\otimes \ldots\otimes \omega_r)$ is irreducible and tempered,
	\item all tempered $\GL_n(L)$ representations are isomorphic to one of this form, and
	\item $I_{M}^G(\omega_1 \otimes \ldots \otimes\omega_r) \cong I_{M'}^G(\omega_1' \otimes \ldots \otimes \omega_{r'}')$ if and only if $r = r'$ and there is a permutation $s$ of $\{1,\ldots,\, r\}$ such that $\omega_i \cong \omega'_{s(i)}$.
\end{enumerate}
\end{lem}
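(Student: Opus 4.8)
The plan is to reduce the statement about tempered representations to the classification of discrete series (the previous lemma) together with the theory of parabolic induction, following the Bernstein--Zelevinsky approach. For part (i), irreducibility is the essential point: I would invoke the criterion that $I_M^G(\omega_1 \otimes \cdots \otimes \omega_r)$ is irreducible precisely when the segments attached to the $\omega_i$ are pairwise ``unlinked'' in the sense of Zelevinsky. Since each $\omega_i = \Sp(\pi'_i, d_i)$ is a \emph{unitary} discrete series, the corresponding segment is centered at $0$ on the real line of unramified twists; two such centered segments are never linked (one can only be nested inside the other, and a centered segment nested in another centered segment must coincide, in which case induction from the two-fold product of a discrete series to $\GL_{2n_i}(L)$ is still irreducible by the unitarity/rigidity of the Zelevinsky picture). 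Thus the induced representation is irreducible. Temperedness then follows from the Harish-Chandra/Waldspurger description of the tempered spectrum: an irreducible representation is tempered iff it is a subquotient — hence here a direct summand, hence the whole — of $I_M^G$ of a discrete series of a Levi, with the inducing data unitary and ``on the unitary axis''; this is exactly our situation. Concretely I would cite \cite{Wal03} (Waldspurger's write-up of the Plancherel formula) or the relevant section of \cite{BZ77}/\cite{Zel80} for the statement that $I_M^G(\omega_1 \otimes \cdots \otimes \omega_r)$ with the $\omega_i$ unitary discrete series is irreducible and tempered.

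For part (ii), I would argue that every irreducible tempered $\GL_n(L)$ representation $\pi$ arises this way. By the general structure of the tempered spectrum, $\pi$ is a subrepresentation of $I_M^G(\tau)$ for some standard Levi $M = \prod_i \GL_{n_i}(L)$ and some discrete series representation $\tau = \bigotimes_i \omega_i$ of $M$ with each $\omega_i$ a (unitary) discrete series of $\GL_{n_i}(L)$; this is Harish-Chandra's theorem that tempered representations are the irreducible constituents of unitarily induced discrete series. But by part (i), $I_M^G(\tau)$ is already irreducible, so $\pi \cong I_M^G(\omega_1 \otimes \cdots \otimes \omega_r)$, as desired.

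For part (iii), the ``if'' direction is immediate: permuting the blocks of $M$ is implemented by conjugation by a permutation matrix in $\GL_n(L)$, which identifies the two induced representations. For the ``only if'' direction, suppose $I_M^G(\omega_1 \otimes \cdots \otimes \omega_r) \cong I_{M'}^G(\omega'_1 \otimes \cdots \otimes \omega'_{r'})$. Passing to supercuspidal support, write each $\omega_i = \Sp(\pi'_i, d_i)$, so that the supercuspidal support of the left side is the multiset $\{\pi'_i \otimes |\det|^{(d_i+1-2k)/2} : 1 \le i \le r,\ 1 \le k \le d_i\}$, and similarly for the right side. Since supercuspidal support is an isomorphism invariant (it is the cuspidal datum of the Bernstein component, well-defined up to association), these two multisets coincide. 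Now I would recover the segments from the supercuspidal support: group the support into segments (maximal ``strings'' $\pi', \pi'|\det|, \pi'|\det|^2, \dots$), which is possible in exactly one way because each $\omega_i$ is a discrete series and hence its segment is the Zelevinsky segment, and the centered position forces the grouping to be canonical. This yields $r = r'$ and a bijection $s$ with $\pi'_i \cong \pi'_{s(i)}$ and $d_i = d_{s(i)}$, hence $\omega_i \cong \omega'_{s(i)}$ by part (iv) of the previous lemma.

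The main obstacle is part (i) — specifically the irreducibility of the unitarily induced product of discrete series — since temperedness and the classification parts (ii), (iii) then follow formally from irreducibility plus Harish-Chandra's description of the tempered spectrum and the rigidity of supercuspidal support. I expect the cleanest route is simply to cite the Bernstein--Zelevinsky irreducibility criterion for ladders/unlinked segments (\cite{Zel80}, together with the observation that unitary discrete series give centered, pairwise-unlinked segments), rather than to reprove it; the argument above should be presented at roughly that level of detail.
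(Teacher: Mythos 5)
Your proposal is correct, and for part (iii) it is the same argument the paper gives (the paper's proof of (iii) is literally ``examine the supercuspidal support''; your elaboration, recovering the centered segments uniquely from the multiset of supercuspidals, is a valid way to make that one line precise). Where you diverge is in (i) and (ii): the paper simply cites Jacquet \cite{Jac77} for both irreducibility-plus-temperedness and exhaustion, whereas you rederive (i) from the Bernstein--Zelevinsky linkage criterion (centered segments attached to unitary discrete series are pairwise unlinked, hence the induced representation is irreducible) and then get (ii) formally from Harish-Chandra's theorem that every irreducible tempered representation is a constituent of induction from a discrete series of a Levi. Both routes are standard; the citation of \cite{Jac77} is shorter, while your segment-theoretic argument makes the mechanism of irreducibility visible and meshes well with the Zelevinsky classification already used in the previous lemma and in (iii). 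One small inaccuracy to fix if you write this up: your parenthetical claim that ``a centered segment nested in another centered segment must coincide'' is false (a length-one centered segment sits strictly inside a length-three one on the same cuspidal line); but this does not hurt the argument, since segments with one contained in the other are unlinked by definition, and centered segments on different half-integer lattices or different cuspidal lines cannot be linked either, so the irreducibility conclusion stands.
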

\begin{proof} (i) and (ii) are proven in \cite{Jac77}. (iii) follows by examining the supercuspidal support of the two representations.
\end{proof}

\subsection{Euler-Poincar\'{e} functions at the Archimedean places}

Let $G/\RR$ be a reductive group.  Throughout this chapter, we will assume that $G$ has a maximal torus which is anisotropic modulo the center.  Let $A_G$ denote the maximal split torus in the center of $G\times_{\QQ} \RR$ and let $A_{G,\infty}$ denote the connected component of $A_{G}(\RR)$ (with respect to the real topology).  Let $K_\infty$ be a maximal compact subgroup of $G(\RR)$ and let $K_{\infty}' = K_\infty A_{G,\infty}$.  Fix an irreducible finite dimensional algebraic $G(\RR)$ representation $\xi$ and let $\omega_\xi$ denote the central character of $\xi$ on $A_{G,\infty}$.  Let $\pi$ be an irreducible admissible representation whose central character on $A_{G,\infty}$ is $\omega_\xi$.  Let $\fg = \Lie G(\RR)$.  The \emph{Euler-Poincar\'{e} characteristic} of $\pi$ (with respect to $\xi$) is defined as 
$$\chi_{\EP}(\pi \otimes \xi^{\vee}) = \sum_{i \geq 0} (-1)^i \dim H^i(\fg,\,K_{\infty}',\, \pi \otimes \xi^{\vee})$$
(here the cohomology is Harish-Chandra's $(\fg,\, K)$ cohomology).

We say $\pi$ is $\xi$-cohomological if there is an $i \geq 0$ such that 
$$H^i(\fg,\,K_{\infty}',\, \pi \otimes \xi^{\vee})\neq 0.$$
More generally, if $\pi$ is an automorphic $G(\AAA)$ representation such that $\pi_\infty$ is $\xi$-cohomological, we say $\pi$ is $\xi$-cohomological.  It is clear that $\chi_{\EP}(\pi_\infty \otimes \xi^{\vee}) = 0$ if $\pi$ is not $\xi$-cohomological.

\begin{defn} Let $\xi$ be an irreducible finite dimensional algebraic representation of $G(\RR)$ and let $T$ be a compact torus of $G$ of maximal dimension.  We recall that $\xi\mid_{T(\RR)}$ decomposes as a direct sum of abelian characters $\{\lambda\}$.  A choice of positive roots of $T$ determines an ordering of the roots $\{\lambda\}$, and with respect to this ordering $\xi$ has a unique highest weight $\lambda_\xi$.  We say $\xi$ has \emph{regular highest weight} if for every coroot $\alpha^{\vee}$, we have $\linf{\lambda_\xi}{\alpha^{\vee}} \neq 0$.
\end{defn}

\begin{prop} \label{RHWProps} Let the highest weight of $\xi$ be regular and let $\pi$ be an automorphic, $\xi$-cohomological representation.  Let $q(G) = \frac{1}{2} \dim_{\RR} G(\RR)/K'_\infty$.
\begin{enumerate}[(a)]
	\item If $\pi$ is $\xi$-cohomological, then $\pi_\infty$ is a discrete series representation, and $\chi_{\EP}(\pi_\infty\otimes \xi^{\vee}) = (-1)^{q(G)}.$  Moreover, all $\xi$-cohomological representations are in the same discrete series $L$-packet.
	\item $\pi$ occurs in the discrete spectrum of $G(\AAA)$ if and only if it occurs in the cuspidal spectrum, and $m_{\disc}(\pi) = m_{\cusp}(\pi)$.
	\item For any place $v$ of $F$, $\pi_v$ is tempered.
	\item The field of rationality $\QQ(\pi)$ is a finite extension of $\QQ$.
\end{enumerate}
\end{prop}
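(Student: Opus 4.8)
The plan is to establish the four parts in turn; each is a standard consequence of the regularity of the highest weight of $\xi$ together with the structure theory of $(\fg, K'_\infty)$-cohomology and of the discrete spectrum of unitary groups. For (a), recall that since $G$ has a maximal torus that is anisotropic modulo the center, $G(\RR)$ admits discrete series, and the infinitesimal character of $\xi^\vee$ is regular. By Vogan--Zuckerman (equivalently Borel--Wallach), an irreducible unitary $(\fg, K'_\infty)$-module $\pi_\infty$ with nonvanishing $\xi$-cohomology must have infinitesimal character equal to that of $\xi^\vee$; when this character is regular, the only such modules are the discrete series in the $L$-packet $\Pi_\xi$ attached to $\xi$, and for each of them the cohomology is one-dimensional, concentrated in degree $q(G)$. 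This simultaneously shows that $\pi_\infty$ is a discrete series representation, that all $\xi$-cohomological representations lie in the single packet $\Pi_\xi$, and that $\chi_{\EP}(\pi_\infty \otimes \xi^\vee) = (-1)^{q(G)}$.

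For (b), part (a) gives that $\pi_\infty$ is a discrete series representation, hence tempered, with regular infinitesimal character. A theorem of Wallach on the constant terms of square-integrable automorphic forms then forces $\pi$ to be cuspidal: the residual spectrum is built from Langlands quotients that are non-tempered at every place, which is incompatible with $\pi_\infty$ being a discrete series; thus $\pi$ lies in the discrete spectrum iff it lies in the cuspidal spectrum, and $m_{\disc}(\pi) = m_{\cusp}(\pi)$. For (c), we may by (b) assume $\pi$ is cuspidal, and we invoke the endoscopic classification of the discrete spectrum of $U_{E/F}(n)$ (Mok, completed by Kaletha--Minguez--Shin--White) to attach to $\pi$ a global Arthur parameter $\psi = \boxplus_i (\mu_i \boxtimes [d_i])$. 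Were some $d_i > 1$, the parameter $\psi$ would be non-tempered and every representation in its archimedean packet --- in particular $\pi_\infty$ --- would be non-tempered, contradicting that $\pi_\infty$ is a discrete series by (a); so $\psi$ is tempered, every local component $\pi_v$ lies in a tempered $L$-packet, and hence is tempered.

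For (d), part (a) says $\pi$ is $\xi$-cohomological, so for any level $\nn$ with $\pi^{\Gamma(\nn)} \neq 0$ the space $(\pi^\infty)^{\Gamma(\nn)}$ contributes to $H^{q(G)}$ of the locally symmetric space of level $\Gamma(\nn)$ with coefficients in the local system $\mathcal V_\xi$; this local system, and the commuting Hecke action on its cohomology, is defined over the number field $M_\xi$ over which $\xi$ is defined, and this cohomology is finite-dimensional over $M_\xi$. The Hecke eigensystem cut out by $\pi$ therefore takes values in a finite extension $M/M_\xi$, and for every finite place $\pp$ the Satake (or Hecke) data determining $\pi_\pp$ --- hence $\QQ(\pi_\pp)$ --- lie in $M$. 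Since $\QQ(\pi)$ is the compositum of the $\QQ(\pi_\pp)$, we conclude $\QQ(\pi) \subseteq M$ is a number field. (Alternatively one may invoke Clozel's theorem that a cohomological cuspidal representation has a model over a number field, or combine the finiteness of the cohomological cuspidal spectrum of bounded weight and level with its stability under $\Aut(\CC)$.) The one genuinely deep ingredient is the classification input in (c): temperedness at the non-split places is not accessible by soft arguments, and the passage ``regular weight $\Rightarrow$ trivial Arthur $\SL_2$'' must be argued at the archimedean place via (a); parts (a), (b), and (d) are otherwise formal.
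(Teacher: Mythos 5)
Parts (a), (b), and (d) are correct and are in substance the same route the paper takes: the paper simply cites \cite{ST12} (regular weight $\Rightarrow$ discrete series at $\infty$, Euler characteristic $(-1)^{q(G)}$), Wallach \cite{Wal84} for cuspidality of discrete automorphic representations with tempered archimedean component, and \cite{ST13} for finiteness of $\QQ(\pi)$; your Vogan--Zuckerman argument, your use of Wallach, and your Hecke-action-on-arithmetic-cohomology argument are the standard proofs behind those citations.

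Part (c), however, has a genuine gap as written. From the endoscopic classification you correctly rule out $d_i>1$ in the Arthur parameter $\psi=\boxplus_i(\mu_i\boxtimes[d_i])$ using (a), but the step ``$\psi$ is tempered, every local component $\pi_v$ lies in a tempered $L$-packet, and hence is tempered'' does not follow. Genericity of the global parameter does \emph{not} imply that its localizations $\psi_v$ are bounded parameters: that is exactly the Ramanujan problem for the conjugate self-dual cuspidal representations $\mu_i$ of $\GL_{n_i}(\AAA_E)$, and it is not a formal consequence of Mok/KMSW. If some $\mu_{i,v}$ is non-tempered, the local packet attached to $\psi_v$ consists of non-tempered Langlands quotients, so membership in the packet of a ``generic'' global parameter gives nothing at the place $v$. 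To close the gap you must use the cohomological, regular-weight hypothesis a second time: the $\mu_i$ are then regular algebraic (cohomological) conjugate self-dual cuspidal representations of $\GL_{n_i}(\AAA_E)$, for which temperedness at all places is known (Shin, Caraiani, together with Clozel's purity; this Ramanujan-type input is the content behind the result of \cite{ST12} that the paper cites for (c)). The archimedean discrete-series argument only eliminates the Arthur $\SL_2$; it does not by itself bound the local parameters at the finite places, so this extra ingredient must be named explicitly.
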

\begin{proof} (a) is the second bullet point of page 44 of \cite{ST12}.  (b) is Theorem 4.3 of \cite{Wal84}. (c) is a statement of Corollary 4.16 of \cite{ST12}.  Finally, (d) follows from Proposition 2.15 of \cite{ST13}, since a $\xi$-cohomological discrete automorphic representation is cuspidal, in view of (b).
\end{proof}

Let $G'$ be the compact inner form of $G$.  There is a unique Haar measure on $G(\RR)/Z(\RR)$ such that the induced measure on $G'(\RR)/Z'(\RR)$ has total measure $1$; we call this measure the \emph{Euler-Poincar\'{e} measure}.

In \cite{CD90}, Clozel and Delorme construct a bi-$K_\infty$-invariant function $\phi_\xi \in C^\infty(G(\RR))$ which satisfies
$$\phi_{\xi}(gz) = \chi_\xi^{-1}(z) \phi_\xi(g)\,\,\,\,\,\, g\in G(\RR),\, z\in Z(\RR)$$
and such that, for any $\pi$ with $\chi_\pi = \chi_\xi$, we have
\begin{equation} \label{CDFunction} \tr (\phi_{\xi}) = \chi_{\EP}(\pi \otimes \xi^{\vee})\end{equation}
where the trace is taken with respect to the Euler-Poincar\'{e} measure on $G(\RR)/Z(\FF)$.

Throughout the paper, we'll need the following two facts:
\begin{itemize} 
\item $\phi_\xi$ is \emph{cuspidal}; that is, its orbital integrals vanish on non-elliptic conjugacy classes in $G(\RR)$ (see, for instance, page 267 of \cite{Art89}).
\item  $\phi_\xi(1) = \dim\xi$; this is implicit in \cite{Art89} and follows because $\dim \xi$ is the Plancherel measure of the $L$-packet of discrete-series representations which are $\xi$-cohomological.
\end{itemize}

\subsection{Families of cuspidal automorphic representations}

Fix the following data:
\begin{itemize}
	\item A totally real number field $F$ and a totally imaginary quadratic extension $E/F$;
	\item a finite-dimensional irreducible algebraic representation $\xi$ of $U_{E/F}(n,\, F_\infty)$.  We will assume the highest weight of $\xi$ is regular;
	\item an automorphic character $\chi: Z(\AAA) \to \CC^\times$, such that $\chi_\infty = \chi_\xi$ with conductor $\ff$; and
	\item an ideal $\nn$ of $F$ that is divisible by $\ff$.
\end{itemize}

Let $\Fam(\xi,\,\chi,\,\Gamma(\nn))$ denote the multiset of cuspidal automorphic representations $\pi$ such that $\chi_\pi = \chi$ and $\pi_\infty$ is $\xi$-cohomological.  Such a representation $\pi$ is counted with multiplicity
\begin{equation} \label{multiplicity} 
a_{\Fam}(\pi) = m_{\cusp}(\pi) \cdot \dim (\pi^{\infty})^{\Gamma(\nn)}.
\end{equation}

Since the highest weight of $\xi$ is regular, we may replace $m_{\cusp}(\pi)$ by $m_{\disc}(\pi)$.

  Given a family $\Fam$, we define $\Fam^{\leq A}$ as the multiset with
$$a_{\Fam^{\leq A}}(\pi) = 
	\begin{cases} 
		a_{\Fam}(\pi) & [\QQ(\pi):\QQ] \leq A
		\\ 0 & \text{otherwise.}
	\end{cases}
$$

For a given multiset $\Fam$, there are finitely many $\pi$ such that $a_{\Fam}(\pi) \neq 0$ by a result of Harish-Chandra. We let 
$$|\Fam| = \sum_{\pi} a_{\Fam}(\pi).$$

Recall the statement of our main Theorem \ref{MainThmIntro}:
\begin{thm} \label{MainThm} Fix $E/F,\,\xi$, and $\chi$ as above.  If $\{\nn_\lambda\}$ is \emph{any} sequence of ideas divisible by the conductor $\ff$ of $\chi$ such that $N(\nn_\lambda) \to \infty$ then
$$\lim_{\lambda \to \infty} 
	\frac{|\Fam(\xi,\,\chi,\, \Gamma(\nn_\lambda))|}
		{\Fam^{\leq A}(\xi,\,\chi,\, \Gamma(\nn_\lambda))|}
= 0.$$
\end{thm}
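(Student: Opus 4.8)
The plan is to combine two results proved later in the paper: the Plancherel equidistribution theorem of Section~3 for the local components of the families $\Fam(\xi,\chi,\Gamma(\nn_\lambda))$, and Proposition~\ref{DiscreteSeriesSmallCor}, which bounds the Plancherel mass carried by the discrete series representations of $\GL_n(F_\pp)$ of small field of rationality (counted with the multiplicities $\dim(\pi_\pp)^{\Gamma(\pp^r)}$). The link between the global problem and these local inputs is the elementary fact that $\QQ(\pi_\pp)\subseteq\QQ(\pi)$ for every finite place $\pp$, so that $[\QQ(\pi):\QQ]\le A$ forces $[\QQ(\pi_\pp):\QQ]\le A$ simultaneously at all $\pp$; hence it will suffice, for each $\lambda$, to control the proportion of $\pi\in\Fam(\xi,\chi,\Gamma(\nn_\lambda))$ whose component at one well-chosen place has field of rationality of degree $\le A$.

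Write $\nn_\lambda=\prod_\pp\pp^{r_\pp}$. For $\pp\nmid\nn_\lambda$ the component $\pi_\pp$ is unramified, so $a_{\Fam}(\pi)=m_{\disc}(\pi)\prod_{\pp\mid\nn_\lambda}\dim(\pi_\pp)^{\Gamma(\pp^{r_\pp})}$ (with $m_{\cusp}$ replaced by $m_{\disc}$ as noted above, via Proposition~\ref{RHWProps}(b)). Using $\QQ(\pi_\pp)\subseteq\QQ(\pi)$ at one prime $\pp\mid\nn_\lambda$ and bounding the remaining factors trivially gives
$$
|\Fam^{\le A}(\xi,\chi,\Gamma(\nn_\lambda))|\ \le\ \sum_{\pi}m_{\disc}(\pi)\,\big(\dim(\pi_\pp)^{\Gamma(\pp^{r_\pp})}\,\mathbf{1}_{[\QQ(\pi_\pp):\QQ]\le A}\big)\!\!\prod_{\qq\mid\nn_\lambda,\ \qq\ne\pp}\!\!\dim(\pi_\qq)^{\Gamma(\qq^{r_\qq})}.
$$
Every $\pi$ in sight is tempered at every place (Proposition~\ref{RHWProps}(c)), so only the tempered local spectrum intervenes, and both this quantity and $|\Fam(\xi,\chi,\Gamma(\nn_\lambda))|$ are of the shape treated by the trace-formula estimates of Section~3 (the geometric side dominated by the identity contribution, the Archimedean factor controlled by $\phi_\xi(1)=\dim\xi$). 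Feeding both into those estimates yields
$$
\frac{|\Fam^{\le A}(\xi,\chi,\Gamma(\nn_\lambda))|}{|\Fam(\xi,\chi,\Gamma(\nn_\lambda))|}\ \le\ \rho_\pp(\lambda)+o(1),
$$
where $\rho_\pp(\lambda)$ is the fraction of the level-$\pp^{r_\pp}$ tempered Plancherel mass of $\GL_n(F_\pp)$ carried by representations with field of rationality of degree $\le A$, and the error $o(1)$ depends only on $N(\nn_\lambda)$ (so is harmless even if $\pp$ varies with $\lambda$). It remains to choose $\pp$ so that $\rho_\pp(\lambda)\to0$.

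Since the asserted limit can be checked along subsequences, I pass to a subsequence and distinguish two cases according to the behaviour of the ``split part'' $\nn_\lambda^{\mathrm{sp}}$ of $\nn_\lambda$ (the product of $\pp^{r_\pp}$ over primes $\pp$ split in $E/F$, where $U(n,F_\pp)\cong\GL_n(F_\pp)$ so Proposition~\ref{DiscreteSeriesSmallCor} applies). If $N(\nn_\lambda^{\mathrm{sp}})\to\infty$, a pigeonhole argument (only finitely many primes of each norm) produces, after a further subsequence, split primes $\pp=\pp_\lambda\mid\nn_\lambda$ with $N(\pp_\lambda)^{\ord_{\pp_\lambda}(\nn_\lambda)}\to\infty$; for this choice of $\pp$, Proposition~\ref{DiscreteSeriesSmallCor} gives $\rho_{\pp_\lambda}(\lambda)\to0$ -- this is where the depth lower bounds of Section~5 and the multiplicity computations of Section~6 enter, handling both the large-residue-field and the large-depth regimes -- and we are done. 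Otherwise $N(\nn_\lambda^{\mathrm{sp}})$ is bounded: then only finitely many split primes ever divide any $\nn_\lambda$, and to bounded order, so there is a split prime $\pp_0$ with $\pp_0\nmid\nn_\lambda$ for all $\lambda$. Now $\pi_{\pp_0}$ is unramified for every $\pi\in\Fam$; applying Proposition~\ref{RHWProps}(c) to $\pi$ and to each of its $\Aut(\CC)$-conjugates (which are again cohomological automorphic representations for a regular weight, hence tempered everywhere), the Satake parameters of $\pi_{\pp_0}$ together with all their Galois conjugates have absolute value $1$, so by Kronecker's theorem they are roots of unity of order bounded in terms of $n$ and $A$. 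Thus only finitely many unramified $\pi_{\pp_0}$ can occur for $\pi\in\Fam^{\le A}$, and by the equidistribution of Section~3 each occurs in $\Fam(\xi,\chi,\Gamma(\nn_\lambda))$ with proportion tending to $0$; summing over this finite set finishes the case.

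I expect the main obstacle to be the local input at split primes, Proposition~\ref{DiscreteSeriesSmallCor}: one must show that, among the discrete series of $\GL_n(F_\pp)$ with a $\Gamma(\pp^r)$-fixed vector, those of small field of rationality carry only a vanishingly small share of the Plancherel mass once $N(\pp)^r\to\infty$. Sections~5 and~6 establish this using Murnaghan's asymptotic character expansion (to force small field of rationality to imply bounded depth, and to count the representations of each depth) together with the formal-degree computations of Aubert and Plymen (to weight them correctly). By comparison, the equidistribution statement of Section~3, while technically substantial, follows the template of Shin and Templier and poses no essentially new difficulty.
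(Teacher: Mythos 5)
Your overall architecture has a genuine gap in what you call Case 1. There you choose split primes $\pp_\lambda \mid \nn_\lambda$ subject only to $N(\pp_\lambda)^{\ord_{\pp_\lambda}(\nn_\lambda)} \to \infty$ and then invoke Proposition \ref{DiscreteSeriesSmallCor} to get $\rho_{\pp_\lambda}(\lambda)\to 0$. But that proposition requires the residue characteristic of the local field to exceed a constant $P_0$ (and $\chi$ to be unramified there); it is uniform in the depth $r$, but it says nothing at a fixed prime of small residue characteristic, no matter how deep the level --- and it cannot, since the lower bound $[\QQ(\pi):\QQ]\geq \frac{p-1}{n}$ of Section 5 is vacuous for small $p$, so positive-depth discrete series with small fields of rationality then carry a non-negligible share of the Plancherel mass. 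Concretely, if $\nn_\lambda = \pp^{\lambda}$ for a fixed split prime $\pp$ of residue characteristic at most $P_0$ (or a prime dividing $\ff$, where $\chi_\pp$ is ramified), your Case 1 selects $\pp_\lambda = \pp$ and the local input does not apply. Contrary to your closing remark, Sections 5 and 6 handle only the large-residue-characteristic regime; the deep-level-at-a-fixed-prime regime is handled in the paper by a genuinely different argument, Proposition \ref{LargePPower}, which rests on the global finiteness theorem (Proposition \ref{Finiteness}, i.e.\ ST13 Cor.\ 5.7: only finitely many potentially $\xi$-cohomological $\pi_\pp$ with $[\QQ(\pi_\pp):\QQ]\leq A$) together with the growth estimate $\dim \pi_\pp^{\Gamma(\pp^r)} \sim C_{\pi_\pp} q^{d_{\pi_\pp} r}$ with $d_{\pi_\pp} < \dim(G/Z)$, compared against $\vol(\Gamma(\pp^r)Z/Z)^{-1}$. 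Your proposal never invokes this and offers no substitute, so the fixed-small-prime, growing-exponent sequences are not covered.

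Two further points. First, your reduction to ``$\rho_\pp(\lambda)+o(1)$ with the error depending only on $N(\nn_\lambda)$, harmless even if $\pp$ varies with $\lambda$'' is not justified by anything in Section 3: Proposition \ref{PlancherelTheorem} is proved for a \emph{fixed} finite set $S$ and a fixed test function at $S$, with the level away from $S$ tending to infinity. If the auxiliary prime and the deep level there vary with $\lambda$, you need uniformity in the place and in the test function $e_{\Gamma(\pp^{r_\pp})}$ that the trace-formula estimates do not provide (and in the extreme case $\nn_\lambda = \pp_\lambda^{r_\lambda}$ the away-from-$S$ level does not go to infinity at all). The paper's proof is structured precisely to avoid this: it fixes one auxiliary split prime $\pp$ of large residue characteristic with $\chi_\pp$ unramified, splits $\{\nn_\lambda\}$ into the finitely many subsequences $\ord_\pp(\nn_\lambda)=i<r_0$ (where equidistribution is applied with fixed local data at $\pp$, the finite set $\mathcal{Z}$ from Proposition \ref{Finiteness}, and Proposition \ref{DiscreteSeriesSmallCor}), and the subsequence $\ord_\pp(\nn_\lambda)\geq r_0$ (handled by Proposition \ref{LargePPower}). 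Second, in your Case 2 the claim that all $\Aut(\CC)$-conjugates of $\pi$ are again cohomological automorphic representations, hence tempered everywhere, is a nontrivial unproven assertion; the finiteness you are after at the auxiliary unramified prime is exactly Proposition \ref{Finiteness} and should be quoted rather than rederived via Kronecker's theorem.
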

		
\section{Plancherel equidistribution for local components of automorphic representations}

\subsection{Hecke algebras and Plancherel measure}

Throughout, let $L$ denote a $\pp$-adic field.  Let $G/L$ be a connected reductive group with center $Z$ and let $\chi:Z(L) \to \CC^\times$ be a unitary character.  We define $\Pi(G(L))$ to be the set of irreducible, admissible, unitary  $G(L)$ representations and $\Pi(G(L),\,\chi)$ is the subset consisting of those representations $\pi$ with $\chi_\pi = \chi$.  Moreover, $\Pi^t$ (resp. $\Pi^{ds}$) denote the subsets of $\Pi$ consisting of tempered (resp. discrete series) representations.

\begin{defn} We define the \emph{Hecke algebra} $\HH(G(L))$ as the convolution algebra of locally constant, compact supported functions $G(L) \to \CC$.

If $\phi\in \HH(G(L))$ and $\pi$ is an irreducible, admissible $G(L)$ representation, then the map
$$\pi(\phi): v \mapsto \int_{G(L)} \phi(g)\, \pi(g)\cdot v \,dg$$
is well-defined and of trace class.  We define $\wh \phi(\pi) = \tr \pi(\phi)$.  The map $\phi \mapsto \wh \phi$ is a linear map from $\HH(G(L))$ to the space of bounded, continuous functions on $\Pi(G(L))$ that are supported on a finite number of Bernstein components.

We define the \emph{fixed central character Hecke algebra} $\HH(G(L),\,\chi)$ as the convolution algebra of locally constant functions $\phi: G(L) \to \CC$ such that
\begin{itemize}
	\item $\phi$ is compactly supported modulo $Z(L)$, and
	\item for $g\in G(L),\, z\in Z(L)$, we have $\phi(gz) = \chi^{-1}(z)\phi(g)$
\end{itemize}

If $\phi_\chi\in \HH(G(L),\chi)$ and $\pi$ is an irreducible, admissible $G(L)$ representation with central character $\chi$, the map
$$\pi(\phi_\chi): v \mapsto \int_{G(L)/Z(L)} \phi(g)\,\pi(g)\cdot v\,dg$$
is well-defined and of trace class: we define $\wh \phi_\chi(\pi) = \tr \pi(\phi_\chi)$.  As above, this gives a linear map from $\HH(G(L),\,\chi)$ to the space of functions on $\Pi(G(L),\,\chi)$.

There is an \emph{averaging map} $\HH(G(L)) \to \HH(G(L),\, \chi)$ given by $\phi\mapsto \phi_\chi$, where 
$$\phi_\chi(g) = \int_{Z(L)} \phi(gz)\chi(z)\,dz.$$
\end{defn}

We have stated the above definition for $G(L)$ but will often apply the notation more generally.  Specifically, if $F$ is a number field and $G/F$ a connected reductive algebraic group, we may refer to the Hecke algebras $\HH(G(\AAA^\infty_F))$ and $\HH(G(\AAA^\infty_F),\,\chi)$ for a central character $\chi$.  If $S$ is a finite set of finite places of $F$ we may moreover replace $\AAA_F$ by $F_S = \prod_{\pp\in S} F_\pp$ or $\AAA^{\infty, S}$.

The following lemma is a simple application of Fubini's theorem, but will come up often in the following chapters:
\begin{lem} \label{AveragingLemma} Assume Haar measures on $Z(L),\, G(L),\, G(L)/Z(L)$ are chosen compatibly.  Fix $\phi\in \HH(G(L))$ and let $\phi_{\chi} \in \HH(G(L),\, \chi)$ be its image under the averaging map.  If $\pi \in \Pi(G(L),\,\chi)$, then $\wh \phi(\pi) = \wh \phi_{\chi}(\pi)$.  (Here $\wh \phi$ is a function on $\HH(G(L))$ and $\wh \phi_\chi$ is a function on $\HH(G(L),\,\chi)$).
\end{lem}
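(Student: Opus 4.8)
The plan is to establish the stronger statement that $\pi(\phi) = \pi(\phi_\chi)$ as operators on the space $V$ of $\pi$; the equality of traces $\wh\phi(\pi) = \wh\phi_\chi(\pi)$ is then immediate. First I would unwind the definitions: for $v \in V$,
\[
\pi(\phi_\chi)v = \int_{G(L)/Z(L)} \phi_\chi(g)\,\pi(g)v\,dg = \int_{G(L)/Z(L)} \left(\int_{Z(L)} \phi(gz)\,\chi(z)\,dz\right)\pi(g)v\,dg,
\]
using the definition of the averaging map $\phi \mapsto \phi_\chi$.

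Next I would invoke the hypothesis $\chi_\pi = \chi$: for $z \in Z(L)$ we have $\pi(gz)v = \chi(z)\,\pi(g)v$, so $\phi(gz)\,\chi(z)\,\pi(g)v = \phi(gz)\,\pi(gz)v$ and hence
\[
\pi(\phi_\chi)v = \int_{G(L)/Z(L)}\int_{Z(L)} \phi(gz)\,\pi(gz)v\,dz\,dg.
\]
Because the Haar measures on $Z(L)$, $G(L)$ and $G(L)/Z(L)$ are normalized compatibly, the Weil integration formula $\int_{G(L)} F(g)\,dg = \int_{G(L)/Z(L)}\int_{Z(L)} F(gz)\,dz\,dg$ applies to $F(g) = \phi(g)\,\pi(g)v$ (interpreted coordinatewise on $V$, or after pairing against an arbitrary $v^* \in V^*$), which gives $\pi(\phi_\chi)v = \int_{G(L)} \phi(g)\,\pi(g)v\,dg = \pi(\phi)v$, as desired.

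The one point deserving a remark is the interchange of the two integrations, but it is harmless here. Since $\phi$ is locally constant and compactly supported, there is a compact open subgroup $K$ under which $\phi$ is bi-invariant; by admissibility of $\pi$, once $v$ lies in a fixed $K$-isotypic subspace the integral defining $\pi(\phi)v$ collapses to a finite sum over $\supp\phi/K$, and the same reduction applies to $\phi_\chi$ modulo $Z(L)$. Thus all the integrals in sight are genuinely finite sums and Fubini is elementary. Having shown $\pi(\phi) = \pi(\phi_\chi)$ — with both sides of trace class by construction — we conclude $\wh\phi(\pi) = \tr\,\pi(\phi) = \tr\,\pi(\phi_\chi) = \wh\phi_\chi(\pi)$. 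I do not anticipate any real obstacle: the lemma is bookkeeping with the central character together with a trivial application of Fubini's theorem.
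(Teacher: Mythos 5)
Your proof is correct and is exactly the argument the paper has in mind: the lemma is stated there as ``a simple application of Fubini's theorem,'' i.e.\ unwind the averaging map, use $\pi(gz)v=\chi(z)\pi(g)v$, and collapse the double integral via the compatibility $\int_{G(L)}F(g)\,dg=\int_{G(L)/Z(L)}\int_{Z(L)}F(gz)\,dz\,dg$. Your extra remark reducing everything to finite sums via local constancy of $\phi$ and admissibility of $\pi$ is a fine justification of the Fubini step, not a deviation.
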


If $\Gamma \leq G(L)$ is an open compact subgroup, let $e_\Gamma = \vol(\Gamma)^{-1} \one_{\Gamma}$.  This is an idempotent in $\HH(G(L))$ (that is, $e_{\Gamma} \star e_{\Gamma} = e_{\Gamma}$).  Moreover, if $\pi$ is an irreducible admissible $G(L)$-representation, then 
$$\wh e_{\Gamma}(\pi) = \tr \pi(e_\Gamma) = \dim \pi^{\Gamma},$$
where $\pi^{\Gamma}$ denotes the space of $\Gamma$-fixed vectors in the space of $\pi$.  Moreover, let $e_{\Gamma,\chi}$ denote the image of $e_{\Gamma}$ under the averaging map $\HH(G(L)) \to \HH(G(L),\, \chi)$.  We note that $e_{\Gamma,\chi} = 0$ unless $\chi$ is trivial on $\Gamma \cap Z$, and in this case, $e_{\Gamma,\chi}(1) = \vol(\Gamma Z/Z)^{-1}$.  Moreover, it follows immediately that if $\pi$ has central character $\chi$, then 
$$\wh e_{\Gamma,\chi}(\pi) = \dim \pi^{\Gamma}.$$

\begin{prop} There is a unique measure $\mupl$ on $\Pi(G(L))$, called the \emph{Plancherel measure}, such that, for any $\phi \in \HH(G(L))$ the following equality holds:
$$\phi(1) = \mupl(\phi) := \int_{\Pi(G(L))} \wh \phi(\pi)\,d\mupl(\pi).$$

Moreover, $\mupl$ is supported on the tempered spectrum $\Pi^t(G(L))$.
\end{prop}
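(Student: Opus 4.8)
The plan is to treat the existence and support assertions as a citation to Harish-Chandra's Plancherel formula for reductive $p$-adic groups (in the form written up by Waldspurger), and to supply directly only the short density argument needed for uniqueness.

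\emph{Existence and support.} Harish-Chandra's Plancherel theorem produces a positive Radon measure $\mupl$ on $\Pi^t(G(L))$, which I would extend by zero to all of $\Pi(G(L))$, such that $\phi(1)=\int_{\Pi(G(L))}\wh\phi(\pi)\,d\mupl(\pi)$ for every $\phi\in\HH(G(L))$. I would recall (without proof) its shape: writing the tempered dual as the disjoint union, over conjugacy classes of Levi subgroups $M$ of $G(L)$ and discrete series $\sigma$ of $M$, of the compact orbits $\{I_M^G(\sigma\otimes\nu):\nu\text{ an unramified unitary character of }M\}$, the measure $\mupl$ is the corresponding locally finite sum of push-forwards under $I_M^G$ of smooth measures on these orbits, the density being a constant multiple of Harish-Chandra's $\mu$-function $\mu(\sigma\otimes\nu)$. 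That this is a genuine positive measure and that the displayed identity holds is the content of Harish-Chandra's wave-packet and Schwartz-space machinery; that no non-tempered $\pi$ can lie in the support follows from the growth estimates on matrix coefficients (equivalently, Casselman's criterion). Both inputs I would quote rather than reprove. Convergence of the integral for a fixed $\phi$ is automatic, since $\wh\phi$ is supported on finitely many Bernstein components, so the integral is a finite sum of integrals over compact orbits.

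\emph{Uniqueness.} Suppose $\mu$ is any measure on $\Pi(G(L))$ with $\phi(1)=\int\wh\phi\,d\mu$ for all $\phi\in\HH(G(L))$, and set $\nu=\mu-\mupl$, so that $\int\wh\phi\,d\nu=0$ for every $\phi$. I would then show $\nu=0$ by a localized Stone--Weierstrass argument. Fix a Bernstein component and a test function $\phi$ supported near it, so that $\wh\phi$ vanishes on all other components; as $\phi$ varies, the resulting functions on the compact torus of unramified unitary twists (modulo the finite stabilizer) attached to that component include all the invariant trigonometric polynomials, hence are uniformly dense in the continuous functions supported there, and by the linear independence of characters of irreducible admissible representations they also separate the (non-tempered) points of $\Pi(G(L))$ from it. A finite signed measure that integrates to zero against such a family on every component must therefore vanish, both on the tempered locus and off it; hence $\nu=0$ and $\mu=\mupl$.

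\emph{Main obstacle.} The only genuinely deep ingredient --- the existence of $\mupl$ and the fact that it is supported on $\Pi^t(G(L))$ --- is Harish-Chandra's theorem, which I would invoke as a black box; the $\mu$-function description, the finiteness bookkeeping making the integral converge, and the Stone--Weierstrass density argument for uniqueness are the routine parts. Thus the hard part is entirely imported from the literature, and the proof proper amounts to citing it and checking that the normalization matches $\phi(1)=\mupl(\phi)$.
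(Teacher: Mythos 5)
Your proposal matches the paper's treatment: the paper gives no proof of this proposition, simply citing Waldspurger's write-up of Harish-Chandra's Plancherel formula (\cite{Wal03}, with \cite{AP05} for the explicit $\GL_n$ case), exactly the black box you invoke for existence and temperedness of the support. The uniqueness sketch you add (via the trace Paley--Wiener description of $\wh\phi$ on each Bernstein component) goes beyond what the paper records but is consistent with the standard references it relies on.
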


For $\pp$-adic groups, the Plancherel measure was described in \cite{Wal03}.  In the case of $G = \GL_n$, a completely explicit description of the Plancherel measure is given in \cite{AP05}.  We will need a fixed-central-character version of the Plancherel measure:

\begin{prop} There is a unique measure $\mupl_\chi$ on $\Pi(G(L),\,\chi)$ such that, for any $\phi_{\chi} \in \HH(G(L),\,\chi)$ the following equality holds:
$$\phi_{\chi}(1) = \mupl_{\chi}(\phi_\chi) := \int_{\Pi(G(L),\,\chi)} \wh \phi_{\chi}(\pi)\, d\mupl_{\chi}(\pi).$$

We call $\mupl_{\chi}$ the \emph{fixed central character Plancherel measure}; it is supported on the tempered spectrum $\Pi^t(G(L),\,\chi)$.  For any $\pi$ which is not a discrete series representation, we have $\mupl_{\chi}(\pi) = 0$.  If $\pi$ is a discrete series representation, then $\mupl_{\chi}(\pi) = \deg(\pi)$, the formal degree of $\pi$.
\end{prop}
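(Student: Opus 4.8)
The plan is to derive the fixed--central--character Plancherel formula from the ordinary one (the preceding Proposition) by Fourier analysis on the abelian group $Z(L)$, and then to extract the description of the atomic part from the structure of the tempered dual.

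\emph{Existence.} I would first record the elementary identity
\[ \phi(1) \;=\; \int_{\widehat{Z(L)}} \phi_\chi(1)\, d\chi \qquad (\phi \in \HH(G(L))), \]
where $\phi_\chi$ is the image of $\phi$ under the averaging map, $\widehat{Z(L)}$ is the Pontryagin dual of $Z(L)$, and $d\chi$ is the Haar measure dual to the chosen measure on $Z(L)$: indeed $g := \phi|_{Z(L)}$ lies in $C_c^\infty(Z(L))$, one has $\phi_\chi(1) = \int_{Z(L)} g(z)\chi(z)\, dz$, and the identity is Fourier inversion on $Z(L)$ (using invariance of Haar measure under $\chi\mapsto\chi^{-1}$). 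Comparing this with the ordinary Plancherel formula and disintegrating $\mupl$ along the continuous central--character map $c\colon \Pi^t(G(L))\to \widehat{Z(L)}$, $\pi\mapsto \chi_\pi|_{Z(L)}$, produces measures $\mupl_\chi$ supported on the fibres $c^{-1}(\chi)=\Pi^t(G(L),\chi)$ with $\mupl = \int_{\widehat{Z(L)}}\mupl_\chi\, d\chi$; that this disintegration is over the dual Haar measure is part of Harish-Chandra's Plancherel theorem (\cite{Wal03}) and is consistent with the displayed identity. Since $\wh\phi(\pi) = \wh{\phi_\chi}(\pi)$ whenever $\chi_\pi = \chi$ by Lemma \ref{AveragingLemma}, substituting into the two expressions for $\phi(1)$ gives $\phi_\chi(1) = \int_{\Pi^t(G(L),\chi)}\wh{\phi_\chi}(\pi)\,d\mupl_\chi(\pi)$ for almost every $\chi$; because the explicit Plancherel density varies smoothly in the central--character direction this upgrades to every $\chi$, and since the averaging map $\HH(G(L))\to\HH(G(L),\chi)$ is surjective it holds for all $\phi_\chi$. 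Support on $\Pi^t(G(L),\chi)$ is inherited from $\mupl$.

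\emph{Uniqueness and the atomic part.} Uniqueness is standard: a Radon measure on $\Pi(G(L),\chi)$, finite on each Bernstein component, that annihilates every $\wh{\phi_\chi}$ must vanish, since by the trace Paley--Wiener theorem these functions are dense in the natural space of functions supported on finitely many Bernstein components. For the atoms, recall that $\Pi^t(G(L),\chi)$ is a disjoint union of orbifolds, each the quotient by a finite group of a compact real torus that is a torsor under $X_u(M)_0 := \ker\big(X_u(M)\to X_u(Z(G)(L))\big)$ for a Levi $M$; this torus has dimension $\dim A_M - \dim A_G$, which is $0$ exactly when $M=G$ (so these orbifolds are the finite orbits of discrete series) and is $\geq 1$ for every proper Levi. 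On each positive--dimensional orbifold the explicit Plancherel density shows $\mupl_\chi$ is absolutely continuous with respect to the smooth measure on the torus quotient, hence atom--free, so $\mupl_\chi(\{\pi\}) = 0$ for $\pi$ not a discrete series representation. If $\pi$ is a discrete series representation it is then an isolated point of $\Pi^t(G(L),\chi)$ with $\mupl_\chi(\{\pi\})>0$, and the identification $\mupl_\chi(\{\pi\}) = \deg(\pi)$, the formal degree computed against the compatibly chosen Haar measure on $G(L)/Z(L)$, is the defining link between formal degrees and the Plancherel measure; it can be read off from \cite{Wal03}, or, in the case $G = \GL_n$ that we ultimately need, from the explicit formulas in \cite{AP05}.

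The step I expect to be the main obstacle is the normalization bookkeeping: matching the compatibly chosen Haar measures on $Z(L)$, $G(L)$, $G(L)/Z(L)$ with the dual measure on $\widehat{Z(L)}$ so that disintegrating the $1$--dimensional ordinary Plancherel measure of the orbifold through a discrete series $\pi$ (whose density there equals $\deg(\pi)$) yields a point mass of \emph{exactly} $\deg(\pi)$ on the corresponding fixed--central--character orbifold; the same care makes the ``almost every $\chi$ $\Rightarrow$ every $\chi$'' passage in the existence argument rigorous. Both are controlled by the explicit Plancherel formula, so the proposition is in the end a repackaging of Harish-Chandra's theorem together with the observation that fixing the central character collapses the $X_u(G)$--direction and turns discrete series into isolated atoms.
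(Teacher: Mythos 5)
Your argument is correct, and it is essentially the paper's own route: the paper does not spell out a proof but explicitly says the construction ``follows from abelian Fourier analysis and the non-fixed central character Plancherel measure as in \cite{Bin15},'' which is exactly your Fourier-inversion-on-$Z(L)$ disintegration of $\mupl$, with the atoms identified via formal degrees from \cite{Wal03} and \cite{AP05}. You have simply filled in the details (uniqueness via trace Paley--Wiener density, absolute continuity on positive-dimensional fixed-central-character orbits) that the paper leaves to the references.
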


To our knowledge, the construction of the fixed central character Plancherel measure has not been written down explicitly.  However, the construction follows from abelian Fourier analysis and the non-fixed central character Plancherel measure as in \cite{Bin15}.

\subsection{Counting measures and Plancherel Equidistribution}

For this subsection, we place ourselves in the global setting.  To this end, we fix a totally real number field $F$ and a totally imaginary quadratic extension $E/F$.  Let $G = U_{E/F}(n)$ with center $Z \cong \Res_{E/F} \mathbb{G}_m$.   Let $\AAA = \AAA_F$ denote the ad\`{e}le ring of $F$.  We fix moreover the following data:
\begin{itemize}
	\item A finite set $S$ of finite places;
	\item an irreducible, finite-dimensional, algebraic representation $\xi$ of $G(F_\infty)$ (as before, we assume the highest weight of $\xi$ is regular);
	\item an automorphic character $\chi: Z(F)\bs Z(\AAA) \to \CC^\times$ with $\chi|_{Z(F_\infty)} = \chi_{\xi}$, the central character of $\xi$; and
	\item an open compact subgroup $\Gamma \leq G(\AAA^S)$ such that $\chi$ is trivial on $\Gamma \cap Z(\AAA^S)$.
\end{itemize}

Let $\Pi_{\disc}(G,\, \chi)$ (resp. $\Pi_{\cusp}(G,\,\chi)$) denote the set of discrete (resp. cuspidal) automorphic $G(\AAA)$ representations with central character $\chi$.

\begin{defn} \label{CountingDef} Fix the above data, and define the \emph{counting measure} $\wh \mu_{\Gamma,\xi,\chi}$ on $\Pi(G(F_S),\,\chi_S)$ (with respect to $\xi$) as follows: for a subset $B\subseteq \Pi(G(F_S),\,\chi_S)$, set
\begin{align*}
\wh \mu^{\disc}_{\Gamma,\xi,\chi}(B) = &  
	\frac{(-1)^{q(G)}\cdot \vol(\Gamma Z/Z)}
		{\dim(\xi)\cdot \vol(G(\QQ)Z(\AAA)\bs G(\AAA))}
	\\ & \,\,\,\,\, \cdot\sum_{\pi \in \Pi_{\disc}(G,\, \chi)}
		m_{\disc}(\pi) 
		\cdot \chi_{\EP}(\pi \otimes \xi^{\vee}) 
		\cdot \dim (\pi^{S,\infty})^{\Gamma}
		\cdot \one_B(\pi_S).
\end{align*}

Because the highest weight of $\xi$ is regular, $m_{\disc}(\pi)$ may be replaced by $m_{\cusp}(\pi)$.  Also, in this case, if $\pi_\infty$ is $\xi$-cohomological then $\chi_{\EP}(\pi_\infty \otimes \xi) = (-1)^{q(G)}$ so the definition simplifies as
\begin{align*}
\wh \mu^{\disc}_{\Gamma,\xi,\chi}(B) = &  
	\frac{\vol(\Gamma Z/Z)}
		{\dim(\xi)\cdot \vol(G(\QQ)Z(\AAA)\bs G(\AAA))}
	\\ & \,\,\,\,\, \cdot\sum_{\substack{
			\pi \in \Pi_{\disc}(G,\, \chi)
			\\ \pi_\infty \text{ is } \xi\text{-cohomological}}}
		m_{\disc}(\pi) 
		\cdot \dim (\pi^{S,\infty})^{\Gamma}
		\cdot \one_B(\pi_S).
\end{align*}
\end{defn}

\begin{defn}\label{PlancherelEquiDefn} Let $F,\, G,\, S,\, \xi,\, \chi$ be as above, and let $\{\Gamma_{\lambda}\}_{\lambda \geq 0}$ be a sequence of open compact subgroups of $G(\AAA^S)$.  We say that $\{\Gamma_\lambda\}$ satisfies \emph{Plancherel equidistribution} with respect to $\xi$ and $\chi$ if the following hold:
\begin{itemize} 
	\item Whenever $A$ is a bounded subset of $\Pi(G(F_S),\,\chi_S)$ that does not intersect the tempered spectrum $\Pi^{t}(G(F_S),\,\chi_S)$, we have
		$$\lim_{\lambda \to \infty} \wh \mu_{\Gamma_\lambda}(A) = 0.$$
	\item Whenever $A$ is a Jordan-measurable subset of $\Pi_t(G(F_S),\,\chi_S)$, we have
		$$\lim_{\lambda \to \infty} \wh \mu_{\Gamma_\lambda}(A) = \mupl_{\chi_S}(A).$$
\end{itemize}\end{defn}
	
\begin{rem} \label{ComparisonRem} Fix $\xi,\,\chi$ and a finite set $S$ of finite places.  Let $\nn = \nn_S \cdot \nn^S$, (where $\nn_S$ is only divisible by primes in $S$ and $\nn^S$ is coprime to $S$).  If $\Fam = \Fam_{\cusp}(\xi,\,\chi,\, \Gamma(\nn_\lambda))$ and $B$ is a subset of $\Pi(G(F_S),\,\chi_S)$, then 
$$\sum_{\pi_S \in B} a_{\Fam}(\pi) = \dim(\xi)\cdot \vol(\Gamma(\nn^S)Z/Z) \cdot \mu^{\cusp}_{\xi,\,\chi}(\one_B \cdot \wh e_{\Gamma(\nn),\,\chi_S})$$
(here $e_{\Gamma(\nn_S)}$ is the idempotent corresponding to $\Gamma(\nn_S) \leq G(F_S)$ and $e_{\Gamma(\nn_S),\,\chi_S}\in \HH(G(F_S),\,\chi_S)$ is its image under the averaging map).

Therefore, the content of Plancherel equidistribution is that the local components of $\pi\in \Fam$ are equidistributed with respect to the Plancherel measure in an appropriate sense.  We will make use of this explicitly in the next section, when we take $B$ to be the set of representations with small field of rationality.
\end{rem}
It is worth noting some discrepancies between our notation (which follows that of \cite{Shi12} and \cite{ST12}), and that of \cite{FL13}, \cite{FLM14}, \cite{FL15}.  In the latter three papers, Finis, Lapid, and Mueller consider the \emph{limit multiplicity problem}, which differs from our definition in one important respect: instead of fixing an algebraic representation at $\infty$, they allow $S$ to contain the infinite places and as such consider a more general set of representations at $\infty$.  This makes their work more general, but forces them to work with more difficult versions of the trace formula, and the asymptotic vanishing of these trace formula terms is still unproven and depends on some analytic prerequisites.  In contrast, in the formulation of Shin and Templier, we may use the trace formula on test functions whose infinite components are Euler-Poincare functions (see \ref{CDFunction}), which considerably simplifies the formula. Moreover, we are ultimately interested in fields of rationality and an important source of discrete automorphic representations with finite fields of rationality are those that are cohomological with respect to certain algebraic representations.

\begin{rem} \label{HaarChoice} The definitions of the Plancherel measure and our counting measures both depend on a choice of Haar measure on the group $G(\AAA)/Z(\AAA)$; in the statement of Plancherel equidistribution we assume that we have made the same choice of Haar measure on each side.  Throughout the remainder of the paper, we will make the following choice of Haar measure.  At the infinite places, we use the Euler-Poincar\'{e} measure.  In the case where $G = U(n)$ or $\GL_n$, we'll pick Haar measures on $G(F_\pp)$ so that $\K_\pp Z/Z$ has measure $1$, where $\K_\pp$ is as defined in Subsection \ref{UnitaryGroups}.
\end{rem}

\subsection{Proof of Plancherel equidistribution}

In this section, we prove the following:
\begin{prop} Fix $\xi$ and $\chi$ and as in the previous section, where $\chi$ has conductor $\ff = \ff^S\cdot \ff_S$. Let $S$ be a finite set of finite places of $F$.  Let $\nn_{\lambda}$ be a sequence of ideals divisible by $\ff^S$ and coprime to $S$.   Then the sequence of measures $\{\mu^{\cusp}_{\Gamma(\nn_\lambda), \xi,\chi}\}$ satisfies Plancherel equidistribution.
\label{PlancherelTheorem}
\end{prop}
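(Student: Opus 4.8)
The plan is to deduce Plancherel equidistribution from Arthur's invariant trace formula applied to a carefully chosen family of test functions, following the strategy of Shin--Templier \cite{ST12} but adapted to the fixed-central-character setting. Fix a Jordan-measurable subset $A \subseteq \Pi^t(G(F_S),\,\chi_S)$ (the case where $A$ avoids the tempered spectrum is handled similarly, and is in fact easier since the relevant terms will be shown to vanish). Since the Plancherel measure $\mupl_{\chi_S}$ is a positive Radon measure, by approximating $\one_A$ from above and below by functions of the form $\wh\phi_{\chi_S}$ for $\phi_{\chi_S} \in \HH(G(F_S),\,\chi_S)$ (this is possible because $\wh{\HH}$ is dense in the relevant space of continuous functions on each Bernstein component), it suffices to prove
$$\lim_{\lambda\to\infty} \wh\mu^{\cusp}_{\Gamma(\nn_\lambda),\,\xi,\,\chi}(\wh\phi_{\chi_S}) = \mupl_{\chi_S}(\wh\phi_{\chi_S})$$
for every fixed $\phi_{\chi_S}\in\HH(G(F_S),\,\chi_S)$. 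Writing out Definition \ref{CountingDef} and using Remark \ref{ComparisonRem}, the left-hand side is (up to the normalizing constant $\dim\xi \cdot \vol(G(\QQ)Z(\AAA)\bs G(\AAA))/\vol(\Gamma(\nn_\lambda^S) Z/Z)$ absorbed correctly) a sum over $\pi\in\Pi_{\disc}(G,\,\chi)$ of $m_{\disc}(\pi)\,\chi_{\EP}(\pi_\infty\otimes\xi^\vee)\,\wh\phi_{\chi_S}(\pi_S)\,\dim(\pi^{S,\infty})^{\Gamma(\nn_\lambda^S)}$, which is exactly the spectral side (discrete part) of the invariant trace formula for $G$ with central character $\chi$, evaluated on the test function
$$f_\lambda = \phi_{\xi,\,\chi_\infty} \otimes \phi_{\chi_S} \otimes e_{\Gamma(\nn_\lambda^S),\,\chi^S},$$
where $\phi_{\xi,\,\chi_\infty}$ is the Clozel--Delorme Euler--Poincar\'e function at the Archimedean places (so that its trace against $\pi_\infty$ picks out $\chi_{\EP}(\pi_\infty\otimes\xi^\vee)$, by \eqref{CDFunction}), and $e_{\Gamma(\nn_\lambda^S),\,\chi^S}$ is the normalized idempotent of $\Gamma(\nn_\lambda^S)$ outside $S$ and the infinite places.

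The core of the argument is then an analysis of the geometric side of the trace formula for $f_\lambda$ as $\lambda\to\infty$. The key points are: (1) because $\phi_\xi$ is \emph{cuspidal} (its orbital integrals vanish off the elliptic set), only elliptic conjugacy classes contribute, and the full trace formula collapses to (essentially) the elliptic part, so the more delicate error terms --- the parabolic/Eisenstein contributions and the noncompactness issues --- either vanish or are controlled exactly as in \cite{ST12}; (2) among elliptic terms, the volume of $\Gamma(\nn_\lambda^S)$ shrinks like a positive power of $N(\nn_\lambda)$, so after dividing by $\vol(\Gamma(\nn_\lambda^S)Z/Z)$ the contribution of every \emph{non-central} elliptic element tends to $0$ --- this is the standard ``level-lowering'' estimate, for which one bounds the relevant orbital integrals uniformly (using that away from $S$ and $\infty$ the test function is a fixed idempotent and the element ranges over a set whose intersection with $\Gamma(\nn_\lambda)$ forces congruence conditions of growing modulus); (3) the identity contribution (and, with a fixed central character, the contribution of $Z(F)$-translates of the identity, which are the only central elliptic terms surviving the character-twisting) gives precisely $\vol(\Gamma(\nn_\lambda^S)Z/Z)\cdot \phi_{\xi,\,\chi_\infty}(1)\cdot\phi_{\chi_S}(1) = \vol(\Gamma(\nn_\lambda^S)Z/Z)\cdot\dim\xi\cdot\phi_{\chi_S}(1)$, using $\phi_\xi(1)=\dim\xi$. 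After dividing by the normalizing constant, the identity term yields exactly $\phi_{\chi_S}(1) = \mupl_{\chi_S}(\wh\phi_{\chi_S})$ by the defining property of the fixed-central-character Plancherel measure. Combining, the limit of $\wh\mu^{\cusp}_{\Gamma(\nn_\lambda)}(\wh\phi_{\chi_S})$ is $\mupl_{\chi_S}(\wh\phi_{\chi_S})$, as desired; for $A$ avoiding the tempered spectrum one instead notes that $\phi_{\chi_S}$ can be chosen with $\wh\phi_{\chi_S}$ supported off $\Pi^t$, hence $\phi_{\chi_S}(1) = \mupl_{\chi_S}(\wh\phi_{\chi_S}) = 0$ since $\mupl_{\chi_S}$ is supported on the tempered spectrum.

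The main obstacle is step (2): obtaining a \emph{uniform} bound on the elliptic orbital integrals of $f_\lambda$ showing that the non-identity elliptic contribution, divided by $\vol(\Gamma(\nn_\lambda^S)Z/Z)$, goes to zero --- and, crucially, doing this in the fixed-central-character setting, where one must simultaneously sum over (or bound) the $Z(F)$-cosets of conjugacy classes and keep track of the character $\chi$ along the center. One must also verify that the formulation of the invariant trace formula \emph{with fixed central character} (as in \cite{ST12} and \cite{FL15}) is legitimate here, that the passage from $\HH(G(F_S))$ to $\HH(G(F_S),\,\chi_S)$ via the averaging map is compatible with all the normalizations (this is where Lemma \ref{AveragingLemma} is used), and that the contribution of $\phi_\xi$ at infinity genuinely isolates the discrete/cuspidal spectrum (via Proposition \ref{RHWProps}(b), regularity of the highest weight forces the cohomological discrete spectrum to be cuspidal, so that the Eisenstein contributions to the trace of $f_\lambda$ vanish). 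Granting the Shin--Templier estimates, the argument is an adaptation rather than a new idea, but the bookkeeping around the central character and the idempotents is where all the care is required.
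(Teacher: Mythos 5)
Your proposal is correct and follows essentially the same route as the paper: reduce (via a two-sided approximation argument, which in the paper is the fixed-central-character version of Sauvageot's density theorem) to testing against functions $\wh\phi_{\chi_S}$ with $\phi_{\chi_S}\in\HH(G(F_S),\chi_S)$, then plug the product of the Clozel--Delorme Euler--Poincar\'e function, $\phi_{\chi_S}$, and the normalized idempotent of $\Gamma(\nn_\lambda^S)$ into the fixed-central-character simple trace formula, identify the spectral side with the counting measure, and show every geometric term other than $(G,1)$ dies in the limit by the Shin--Templier level estimates, leaving $\phi_{\chi_S}(1)=\mupl_{\chi_S}(\wh\phi_{\chi_S})$. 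The paper carries out exactly this plan, handling your step (2) by Lemma 8.4 of \cite{ST12} applied to $G^{\ad}$ (non-central classes eventually miss $Z\cdot\Gamma(\nn_\lambda)$ entirely) and the $(M,1_G)$ terms for proper Levis by the argument of Theorem 9.16 of \cite{ST12}.
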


An analogous result (without fixing the central character) is given by Corollary 9.22 of \cite{ST12}.  Because our method of proof is basically the same as theirs, we will give a sketch, highlighting where we use fixed-central-character analogs of their steps.

The first necessary ingredient is a density theorem of Sauvageot, adapted to the fixed-central-character setting:
\begin{prop} Let $\wh f_S$ be a function on $\Pi(G(F_S),\, \chi_S)$ that is bounded, has bounded support, and is continuous outside a set of Plancherel measure zero.  Fix $\epsilon > 0$.  Then there are functions $\phi_S,\, \psi_S \in \HH(G(F_S),\, \chi_S)$ such that
\begin{enumerate}[(a)]
	\item for all $\pi\in \Pi(G(F_S),\,\chi_S)$, we have $|\wh f_S - \wh \phi_S| \leq \wh \psi_S$, and
	\item $\mupl_{\chi_S}(\wh \psi_S) = \psi_S(1) < \epsilon$.
\end{enumerate}
\end{prop}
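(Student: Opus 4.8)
The plan is to deduce the fixed-central-character statement from the classical (non-fixed central character) density theorem of Sauvageot by the same abelian Fourier analysis over $Z(L)$ that was used to construct $\mupl_\chi$ from $\mupl$. First I would fix a compact open subgroup $Z_0 \leq Z(L)$ small enough that $\chi_S$ is trivial on $Z_0$ and that the given $\wh f_S$ is, on each Bernstein component meeting its support, invariant under twisting by the (finitely many) unramified characters trivial on $Z_0 Z(L)^+$; since $\wh f_S$ has bounded support this is only a condition on finitely many components. Writing $\overline Z = Z(L)/Z_0$, a compactly generated abelian locally compact group, one has the Pontryagin dual $\wh{\overline Z}$, and the assignment $\chi' \mapsto \mupl_{\chi'}$ gives a ``disintegration'' of $\mupl$ along the map $\Pi(G(L)) \to \wh{\overline Z}$ sending $\pi$ to $\chi_\pi|_{Z(L)}$ (restricted appropriately). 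Concretely, for $\phi \in \HH(G(L))$ and its average $\phi_{\chi'}$ under the averaging map, $\wh{\phi_{\chi'}}(\pi) = \wh\phi(\pi)$ for $\pi$ with central character $\chi'$ by Lemma \ref{AveragingLemma}, and integrating over $\chi' \in \wh{\overline Z}$ against a suitable Haar measure recovers $\phi(1) = \mupl(\phi)$; this is exactly the identity used earlier to define $\mupl_\chi$.

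Next I would ``spread out'' $\wh f_S$ to a genuine function on $\Pi(G(L))$: pick a nice bump function on $\wh{\overline Z}$ concentrated near $\chi_S$ and form $\wh F_S(\pi) = \beta(\chi_\pi) \wh f_S(\pi \otimes (\text{twist to central character }\chi_S))$, or more simply extend $\wh f_S$ by the tensor-with-unramified-character action of $X_u(Z)/(\text{lattice})$ and cut off by a bump on the dual. The resulting $\wh F_S$ is bounded, has bounded support (here one uses that only finitely many Bernstein components are involved, and the twisting action has compact orbits), and is continuous off a set of Plancherel measure zero because the exceptional set for $\wh f_S$ pulls back to one of $\mupl$-measure zero under the disintegration. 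Apply the classical Sauvageot theorem to $\wh F_S$ with a parameter $\epsilon' $ to be chosen, obtaining $\phi, \psi \in \HH(G(L))$ with $|\wh F_S - \wh\phi| \leq \wh\psi$ everywhere and $\psi(1) < \epsilon'$. Now set $\phi_S = \phi_{\chi_S}$ and $\psi_S = \psi_{\chi_S}$, the images under the averaging map $\HH(G(L)) \to \HH(G(L),\chi_S)$. For $\pi \in \Pi(G(L),\chi_S)$, Lemma \ref{AveragingLemma} gives $\wh\phi_S(\pi) = \wh\phi(\pi)$ and $\wh\psi_S(\pi) = \wh\psi(\pi)$, and since $\wh F_S(\pi) = \wh f_S(\pi)$ on this locus, inequality (a) is immediate. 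For (b), $\psi_S(1) = \psi_{\chi_S}(1) = \int_{Z_0\bs Z(L)} \psi(z)\chi_S(z)\,dz$ up to the normalization of measures; bounding this by $\mupl(\wh\psi) = \psi(1) < \epsilon'$ requires controlling how mass of $\psi$ near the identity in the $Z(L)$-direction contributes, and choosing $\epsilon'$ (and the bump $\beta$, and $Z_0$) appropriately forces $\psi_S(1) < \epsilon$.

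The main obstacle is step (b): the averaging map does not in general decrease the value at $1$, so one cannot simply write $\psi_S(1) \leq \psi(1)$. The honest fix is to run Sauvageot not for a fixed $Z_0$ but to track the dependence: enlarge the target inequality to $\wh f_S \leq \wh\phi_S + \wh\psi_S$ where $\psi$ is built to be supported, in the central direction, on a single coset of $Z_0$ (possible because $\wh\psi$ only needs to dominate on the bounded support of $\wh f_S$, and one may pre-multiply by an idempotent $e_{Z_0}$-type function in the center), so that the averaging integral over $Z_0\bs Z(L)$ has a single nonzero term and $\psi_S(1)$ equals $\psi(1)$ up to the fixed constant $\vol(Z_0)^{-1}$ already absorbed into the normalization of Haar measure on $G(L)/Z(L)$. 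Alternatively — and this is probably cleanest — one cites that this exact argument is carried out in \cite{Bin15} in deducing the fixed-central-character Plancherel and density statements from the classical ones, and remarks that the same bookkeeping applies verbatim here. I would present the Fourier-analytic reduction in a paragraph and then say the estimate (b) follows as in \cite{Bin15}.
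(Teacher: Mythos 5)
Your proposal takes essentially the same route as the paper: the paper's proof is simply to cite the non-fixed-central-character density theorem (\cite{Sau97}, Th\'eor\`eme 7.1) and observe that the fixed-central-character version follows by abelian Fourier analysis over the center exactly as in Lemma 11.2.7 of \cite{Bin15}, which is precisely the reduction you sketch and ultimately defer to. The bookkeeping you worry about in step (b) is the content of that cited lemma, so your fallback of quoting \cite{Bin15} is exactly what the paper does.
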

\begin{proof} This follows from the non-fixed version of the density theorem (see \cite{Sau97}, Th\'{e}or\`{e}me 7.1) exactly as in Lemma 11.2.7 of \cite{Bin15}.
\end{proof}

\begin{cor} Assume for any function $\phi_S \in \HH(G(F_S),\,\chi_S)$ we have
$$\lim_{\lambda\to\infty} \wh \mu_{\Gamma(\nn_\lambda),\chi_S}(\wh \phi_S)  = \mupl_{\chi_S}(\wh \phi_S).$$
Then Plancherel equidisitribution holds for the sequence of measures $\{\wh \mu_{\Gamma(\nn_\lambda),\chi_S}\}$.
\end{cor}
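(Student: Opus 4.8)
The plan is to reduce Plancherel equidistribution, as formulated in Definition \ref{PlancherelEquiDefn}, to the special case of test functions coming from $\HH(G(F_S),\,\chi_S)$, by an approximation argument built on the density theorem of Sauvageot stated just above. So suppose the hypothesis holds: for every $\phi_S \in \HH(G(F_S),\,\chi_S)$ we have $\lim_{\lambda\to\infty} \wh\mu_{\Gamma(\nn_\lambda),\chi_S}(\wh\phi_S) = \mupl_{\chi_S}(\wh\phi_S)$. We must verify the two bullet points in Definition \ref{PlancherelEquiDefn}, i.e.\ that for $A$ bounded and disjoint from $\Pi^t(G(F_S),\,\chi_S)$ we have $\wh\mu_{\Gamma(\nn_\lambda)}(A)\to 0$, and for $A$ Jordan-measurable in $\Pi^t(G(F_S),\,\chi_S)$ we have $\wh\mu_{\Gamma(\nn_\lambda)}(A)\to\mupl_{\chi_S}(A)$. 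Both follow from a single statement: for any $\wh f_S$ that is bounded, has bounded support, and is continuous off a set of Plancherel measure zero, $\wh\mu_{\Gamma(\nn_\lambda)}(\wh f_S)\to\mupl_{\chi_S}(\wh f_S)$. Indeed the first bullet is the case $\wh f_S=\one_A$ (then $\mupl_{\chi_S}(\one_A)=0$ since $\mupl_{\chi_S}$ is supported on the tempered spectrum), and the second is the case $\wh f_S = \one_A$ for $A$ Jordan-measurable in the tempered spectrum (the boundary of $A$ having Plancherel measure zero is precisely Jordan-measurability). In the first bullet the hypothesis ``bounded support'' for $A$ guarantees $\one_A$ has bounded support.

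So fix such an $\wh f_S$ and $\epsilon>0$, and apply the density proposition to produce $\phi_S,\psi_S\in\HH(G(F_S),\,\chi_S)$ with $|\wh f_S - \wh\phi_S|\le \wh\psi_S$ pointwise on $\Pi(G(F_S),\,\chi_S)$ and $\mupl_{\chi_S}(\wh\psi_S)=\psi_S(1)<\epsilon$. The key point is that the counting measures $\wh\mu_{\Gamma(\nn_\lambda),\chi_S}$ are \emph{positive} measures (after the sign $(-1)^{q(G)}$ is absorbed, which happens exactly because the highest weight of $\xi$ is regular, so $\chi_{\EP}(\pi_\infty\otimes\xi^\vee)=(-1)^{q(G)}$ for every $\pi$ contributing — Proposition \ref{RHWProps}(a); and because $m_{\disc}(\pi)=m_{\cusp}(\pi)\ge 0$). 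Positivity combined with $|\wh f_S-\wh\phi_S|\le\wh\psi_S$ gives
\begin{equation*}
\big|\wh\mu_{\Gamma(\nn_\lambda)}(\wh f_S) - \wh\mu_{\Gamma(\nn_\lambda)}(\wh\phi_S)\big| \le \wh\mu_{\Gamma(\nn_\lambda)}(\wh\psi_S).
\end{equation*}
Now apply the hypothesis twice: $\wh\mu_{\Gamma(\nn_\lambda)}(\wh\phi_S)\to\mupl_{\chi_S}(\wh\phi_S)$ and $\wh\mu_{\Gamma(\nn_\lambda)}(\wh\psi_S)\to\mupl_{\chi_S}(\wh\psi_S)<\epsilon$. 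Taking $\limsup$ over $\lambda$ and using $|\mupl_{\chi_S}(\wh f_S)-\mupl_{\chi_S}(\wh\phi_S)|\le\mupl_{\chi_S}(\wh\psi_S)<\epsilon$, we obtain $\limsup_\lambda|\wh\mu_{\Gamma(\nn_\lambda)}(\wh f_S)-\mupl_{\chi_S}(\wh f_S)| \le 2\epsilon$. Since $\epsilon$ was arbitrary, the limit is as claimed, and this proves Plancherel equidistribution.

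The step I expect to be delicate is verifying that the density proposition genuinely applies in the two cases we need — in particular, that when $A$ is bounded and disjoint from the tempered spectrum, $\one_A$ really does have bounded support and is continuous off a Plancherel-null set (the latter is automatic since $\mupl_{\chi_S}$ vanishes on the non-tempered part, so any discontinuities sit in a null set), and that in the tempered case ``Jordan-measurable'' has been set up to mean exactly ``boundary is Plancherel-null.'' A secondary subtlety is confirming the positivity of the counting measures in the fixed-central-character setting, which rests on the regularity of the highest weight of $\xi$ exactly as in the non-fixed case; this is where we quietly invoke Proposition \ref{RHWProps} and the simplified form of Definition \ref{CountingDef}. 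Everything else is the same soft functional-analytic squeeze used in \cite{ST12} and \cite{Bin15}.
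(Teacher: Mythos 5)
Your proposal is correct and is essentially the paper's own argument: the paper simply invokes the preceding density proposition ``as in Section 2 of \cite{FLM14}'', which is exactly the positivity-plus-squeeze approximation you spell out. The one imprecise point is your claim that continuity of $\one_A$ off a Plancherel-null set is ``automatic'' in the first bullet: the discontinuity set $\partial A$ can contain \emph{tempered} points (tempered limits of non-tempered unitary representations), so one must either know that such limit points form a $\mupl_{\chi_S}$-null set or use the separate statement of Sauvageot's theorem for quasi-compact subsets of the complement of the tempered spectrum, which is how \cite{Sau97} and \cite{FLM14} actually treat this case.
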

\begin{proof} Follows from the previous proposition, as in Section 2 of \cite{FLM14}.\end{proof}

With this in hand, we can prove Plancherel equidistribution by plugging an appropriate function into the (fixed-central-character) trace formula.  Following \cite{Art89}, the trace formula simplifies considerably when applied to a test function $\phi = \phi^{\infty} \cdot \phi_\infty$ where $\phi_\infty$ is an Euler-Poincar\'{e} function.  Therefore, for the rest of the subsection we will assume that $\phi = \phi^\infty\cdot \phi_\xi\in \HH(G(\AAA),\,\chi)$, where $\phi_\xi$ is the Euler-Poincar\'{e} function corresponding to a $G(F_\infty)$ representation of regular highest weight.  We state the formula here:

\begin{defn} Let $G/F$ be a connected reductive group.  At each finite place, let $\K_{\pp}$ denote a special maximal compact subgroup (that is hyperspecial at all places where $G$ is unramified; see \ref{HaarChoice}), and let $\K^\infty = \prod_{\pp} \K_\pp$.  Let $P$ be a parabolic subgroup with Levi decomposition $P = MN$ and let $\gamma\in M(F)$.  If $\phi^\infty: G(\AAA^\infty) \to \CC$ is locally constant and compactly-supported modulo the center, define the \emph{constant term}
$$\phi^\infty_M(\gamma) = \int_{\K^\infty} \int_{N(\AAA^\infty)} \phi^\infty(k^{-1} \gamma n k)\, dn \, dk.$$

Moreover, if $\phi_M^\infty: M(\AAA^\infty) \to \CC^\times$ is locally constant and compactly supported modulo the center, and $\gamma \in M(\AAA^\infty)$, let $M_\gamma$ denote the identity component of the centralizer of $\gamma$ in $M$.  Define the \emph{orbital integral}
$$O_\gamma(\phi^\infty_M) = \int_{M_\gamma(\AAA^\infty)\bs M(\AAA^\infty)} \phi_M^\infty(m^{-1}\gamma m)\, dm.$$
\end{defn}

\begin{defn} Let $\phi = \phi^\infty \phi_\xi$, and let $\chi:Z(\AAA) \to \CC^\times$ be such that $\chi_\infty = \chi_\xi$.  Given $\gamma,\, \gamma'\in M(F)$, we say $\gamma \sim \gamma'$ if there is $m\in M(F),\, z\in Z(F)$ such that $m\gamma m^{-1}  = z \gamma'$.  The \emph{geometric expansion} of the trace formula is 
$$I_{\geom}(\phi^\infty\cdot\phi_\xi,\, \chi) = \sum_{M \geq M_0} \sum_{\gamma \in M(F)/\sim} C(M,\,\xi,\,\gamma) \cdot O_{\gamma}(\phi_M^\infty).$$

Here the outer sum runs over the set of cuspidal Levi subgroups containing a fixed minimal Levi subgroup $M_0$.  And the inner sum runs over representatives of equivalence classes of semisimple elements of $M(F)$.  The term $C(M,\,\xi,\,\gamma)$ is a constant independent of $\phi$, with $C(G,\, \xi,\, 1_G) = (-1)^{q(G)}\dim(\xi)\vol(Z(\AAA)G(F)\bs G(\AAA))$.  The exact values of the other constants $C(M,\, \xi,\, \gamma)$ are unnecessary for our purposes: we invite the reader to see the explanation after (4.3) of \cite{Shi12}.

The \emph{spectral expansion} $I_{\spec}(\phi^\infty\cdot\phi_\xi,\, \chi)$ is 
$$
\sum_{\chi_{\pi} = \chi} m_{\disc}(\pi) \cdot \tr \phi_{\xi}(\pi_\infty) \cdot \wh \phi^\infty(\pi^\infty) $$
here the sum runs over the set of discrete automorphic representations $\pi$ with central character $\chi$.
\end{defn}

Since the highest weight of $\xi$ is regular, $\tr \phi_{\xi}(\pi_{\infty})$ is zero unless $\pi_\infty$ is a discrete-series representation that is $\xi$-cohomological; in this case, $\tr \phi_{\xi}(\pi_{\infty}) = (-1)^{q(G)}$.  Therefore, the spectral expansion simplifies as
$$(-1)^{q(G)} \sum_{
		\substack{
			\chi_{\pi} = \chi
			\\ \text{$\pi$ is $\xi$-cohomological} }}
	m_{\disc}(\pi) \cdot \wh \phi^\infty(\pi^\infty)$$

Moreover, in this situation, all $\xi$-cohomological discrete automorphic representations are cuspidal, so we may replace the $m_{\disc}$ with $m_{\cusp}$ in the definition of $I_{\spec}$.

\begin{thm}[\cite{Art89}, Theorem 6.1] If $\phi_\xi$ is the Euler-Poincar\'{e} function correpsonding to $\xi$ and $\phi = \phi^{\infty} \phi_\xi \in \HH(G(\AAA),\, \chi)$, then
$$I_{\geom}(\phi^\infty \phi_\xi,\, \chi) = I_{\spec}(\phi^\infty\phi_\xi,\, \chi).$$
\label{TraceFormula}\end{thm}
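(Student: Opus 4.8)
The identity is exactly Theorem 6.1 of \cite{Art89}, and the natural proof is to extract it from Arthur's invariant trace formula by specializing the Archimedean test function. The plan is as follows. One begins with the invariant trace formula for a general $\phi \in \HH(G(\AAA))$, which asserts $I_{\geom}(\phi) = I_{\spec}(\phi)$, where the geometric side is a sum over classes of Levi subgroups $M \supseteq M_0$ of (invariant) weighted orbital integrals $\sum_\gamma a^M(\gamma)\,I_M(\gamma,\phi)$ and the spectral side is the discrete contribution plus a sum, over the same $M$, of integrals of (invariant) weighted characters $I_M(\pi,\phi)$ over the tempered spectrum of $M(\AAA)$. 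To pass to the fixed-central-character formula of Theorem \ref{TraceFormula} one integrates this identity against $\chi$ over $Z(F)\bs Z(\AAA)$, i.e. applies the averaging map $\phi \mapsto \phi_\chi$; the spectral side transforms correctly by Lemma \ref{AveragingLemma}, and the geometric side requires only a Fubini argument and careful bookkeeping of the Haar measures fixed in Remark \ref{HaarChoice}, exactly as in the non-fixed case.

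The substance of the theorem comes from taking $\phi = \phi^\infty\cdot\phi_\xi$ with $\phi_\xi$ the Clozel--Delorme Euler--Poincar\'e function satisfying (\ref{CDFunction}). The decisive property of $\phi_\xi$ is that it is \emph{cuspidal}: all its orbital integrals on non-elliptic semisimple classes of $G(F_\infty)$ vanish. On the geometric side one feeds this into Arthur's splitting and descent formulas for weighted orbital integrals, expressing $I_M(\gamma,\phi^\infty\phi_\xi)$ as a sum of products of a weighted orbital integral of a constant term of $\phi^\infty$ at the finite places with a weighted orbital integral of $\phi_\xi$ at $\infty$. Cuspidality of $\phi_\xi$ forces the contributing $\gamma$ to be semisimple with elliptic image in $M(F_\infty)$, kills all but the fully split-off term, and collapses the surviving Archimedean weighted orbital integral to a constant; what remains is the ordinary orbital integral $O_\gamma(\phi_M^\infty)$ of the constant term times a factor $C(M,\,\xi,\,\gamma)$ depending only on $M$, $\xi$, and $\gamma$ (through the Euler--Poincar\'e measure of the centralizer of $\gamma$ and a sign). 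For $M = G$, $\gamma = 1_G$ one reads off $C(G,\,\xi,\,1_G) = (-1)^{q(G)}\dim(\xi)\,\vol(Z(\AAA)G(F)\bs G(\AAA))$ from $\phi_\xi(1) = \dim\xi$ and the sign in $\tr\phi_\xi = \chi_{\EP}$.

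On the spectral side the same cuspidality removes all continuous (Eisenstein) contributions: a cuspidal Archimedean function has vanishing trace against any properly parabolically induced tempered representation, so $I_M(\pi,\phi^\infty\phi_\xi) = 0$ for $M \neq G$, while for $M = G$ the weighted character is the ordinary trace $\tr\pi(\phi^\infty\phi_\xi) = \wh\phi^\infty(\pi^\infty)\cdot\tr\phi_\xi(\pi_\infty)$. By (\ref{CDFunction}), $\tr\phi_\xi(\pi_\infty) = \chi_{\EP}(\pi_\infty\otimes\xi^\vee)$, which (by Proposition \ref{RHWProps}(a), using regularity of the highest weight of $\xi$) vanishes unless $\pi_\infty$ is $\xi$-cohomological, i.e. lies in the discrete-series $L$-packet of $\xi$, in which case it equals $(-1)^{q(G)}$; summing over the discrete automorphic $\pi$ with $\chi_\pi = \chi$ yields precisely $I_{\spec}(\phi^\infty\phi_\xi,\,\chi)$ as stated, and one may replace $m_{\disc}$ by $m_{\cusp}$ since cohomological discrete representations are cuspidal here. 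I expect the geometric simplification to be the main obstacle: verifying that Arthur's splitting/descent machinery together with the cuspidality of $\phi_\xi$ genuinely collapses the weighted orbital integrals to ordinary orbital integrals of constant terms with the advertised constants is the technical core of \cite{Art89}, and it is there that the combinatorics of $(G,M)$-families and the explicit evaluation of the Archimedean factors enters.
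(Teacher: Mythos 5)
Your proposal takes essentially the same route as the paper: the paper's proof consists of citing Arthur's Theorem 6.1 of \cite{Art89} for the non-fixed-central-character identity and remarking that the fixed-central-character version follows by abelian Fourier analysis, which is exactly your averaging-over-the-center step (your detailed sketch of how cuspidality of $\phi_\xi$ collapses the invariant trace formula to the stated geometric and spectral expansions is the content of \cite{Art89} itself, which the paper does not re-prove). One small caution on the part the paper leaves implicit: isolating the single character $\chi$ is not literally ``integrating the identity against $\chi$ over $Z(F)\bs Z(\AAA)$'' combined with Lemma \ref{AveragingLemma}, since the non-fixed spectral side also contains representations whose central characters differ from $\chi$ at the finite places; one must decompose the identity over the finitely many automorphic central characters agreeing with $\chi_\xi$ at infinity and trivial on the relevant open compact subgroup, and then separate the $\chi$-component by orthogonality --- this is the ``abelian Fourier analysis'' the paper invokes.
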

\begin{proof} The non-fixed central character version of this is precisely Theorem 6.1 of \cite{Art89}.  The fixed-central-character version can be derived using abelian Fourier analysis.\end{proof}

We now complete the proof of \ref{PlancherelTheorem}.  Following \cite{ST12}, let $\phi_{\xi}$ be the Euler-Poincar\'{e} function corresponding to $\phi$.  Let $\phi_S \in \HH(G(F_S),\,\chi_S)$ be arbitrary.  Let $e_{\Gamma(\nn)}$ be the idempotent in $\HH(G(\AAA^{S,\infty}))$ corresponding to the subgroup $\Gamma(\nn)$, and let $\phi_{\nn}^{S,\infty}$ be its image in $\HH(G(\AAA^{S,\infty}), \, \chi^{S,\infty})$.  Plugging the test function
$$\phi_{\nn} =
	(-1)^{q(G)} 
	\frac{\vol(\Gamma(\nn)\cdot Z/Z)}
		{\dim \xi \cdot \vol(G(F)Z(\AAA)\bs G(\AAA))}
	\cdot \phi_S \phi_{\nn}^{S,\infty} \phi_\xi
$$
into the fixed central character trace formula, we see that the spectral side is equal to $\wh \mu^{\disc}_{\chi_S}(\wh \phi_S)$.  On the geometric side, the term corresponding to $M = G$, $\gamma = 1$ is precisely $\mupl_{\chi_S}(\phi_S)$.  The other terms equal to orbital integrals of constant terms of the functions $\phi_\nn$.  On $G(\AAA^{S,\infty})$ these functions are supported on $Z \cdot \Gamma(\nn)$, and they have absolute value $1$ on these subgroups.

By applying Lemma 8.4 of \cite{ST12} to $G^{\ad}$, we get the following result: for any element $\gamma \in G(F)$ that is not of the form $zu$ for $z\in Z$ and $u$ unipotent, if $N(\nn)$ is high enough then $\gamma$ does not conjugate into $Z \cdot \Gamma(\nn)$.  Therefore, for any Levi subgroup $M$ and $\gamma\not\in Z$, the trace formula term corresponding to $(M,\, \gamma)$ eventually vanishes.  Moreover, if $M \neq G$, then the trace formula term corresponding to $(M,\, 1_{G})$ approaches $0$ as $N(\nn_\lambda)\to\infty$ by the same logic as in the proof of Theorem 9.16 of \cite{ST12}.

Therefore, as $N(\nn)\to\infty$, the limit of the geometric side of the trace formula is $\phi_S(1) = \mupl_{\chi_S}(\wh \phi_S)$.  Moreover, one checks that the spectral side of the trace formula is precisely the counting measure $\wh\mu^{\disc}_{\Gamma(\nn),\,\xi,\,\chi_S}(\wh \phi_S)$, which is equal to the cuspidal counting measure since $\xi$ has regular highest weight. This completes the proof.

\section{Contingent Completion of the Proof}

In this section, we will prove our main theorem, contingent upon some necessary results about depth-zero discrete series representations.  Because these results require delving deeper into the representation theory of $\GL_n(L)$, we have opted to delay their proofs until the next section.

Throughout this section, we are in the global situation: $F$ is a totally real field, $E/F$ a totally imaginary quadratic extension, and $G = U_{E/F}(n)$.


\subsection{A finiteness result for local components of cohomological representations}
Let $\xi$ be an irreducible, finite dimensional, algebraic representation of $G(F_\infty)$.  We continue to assume the highest weight of $\xi$ is regular.
\begin{defn} Fix a finite place $\pp$ of $F$.  We say a representation $\pi_\pp$ of $G(F_\pp)$ is \emph{potentially $\xi$-cohomological} if there is a $\xi$-cohomological automorphic representation $\pi$ whose $\pp$-component is isomorphic to $\pi_\pp$.

Given $A \geq 1$, let $\mathcal{Z}_{\pp}(A,\, \xi)$ denote the set of potentially $\xi$-cohomological $G(F_\pp)$ representations $\pi_\pp$ with $[\QQ(\pi_\pp):\QQ] \leq A$.  (We will drop the references to $\pp,\, A,\, \xi$ when they are clear from context).
\end{defn}

\begin{prop} \label{Finiteness} Fix $\pp,\, A,\, \xi$ as above and assume the highest weight of $\xi$ is regular.  The set $\mathcal{Z}_\pp(A,\, \xi)$ is finite.\end{prop}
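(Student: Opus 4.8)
The plan is to bound, for a potentially $\xi$-cohomological representation $\pi_\pp$ with $[\QQ(\pi_\pp):\QQ]\le A$, both its \emph{depth} and its \emph{Plancherel measure}, and then to invoke that the tempered spectrum, stratified by depth, consists of countably many compact orbifolds, only finitely many of which can contain points of bounded field of rationality and bounded Plancherel mass. First I would record the structural constraints coming from the global hypothesis. If $\pi$ is a $\xi$-cohomological automorphic representation with $\pi_\pp \cong \pi_\pp$, then by Proposition \ref{RHWProps}(c) the local component $\pi_\pp$ is tempered, and $\pi_\pp$ has central character on $Z(F_\pp)$ equal to the $\pp$-component of the global character, which—although $\chi$ is not fixed in the statement of Proposition \ref{Finiteness}—is at least constrained to lie in a set cut out by $\chi_\infty=\chi_\xi$ and automorphy; in any case the unitary central characters of $\GL_n$-type that can arise on the relevant orbifolds form a bounded (indeed, after fixing the restriction to $\oo^\times$, finite-parameter) family. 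So $\pi_\pp$ lies in $\Pi^t(G(F_\pp))$.

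Next I would bring in the key finiteness input: a tempered $\pi_\pp$ with $[\QQ(\pi_\pp):\QQ]\le A$ has bounded depth. This is exactly the content of the positive-depth lower bounds promised in Section 5 (the results around Proposition \ref{DiscreteSeriesSmallCor}): the degree of the field of rationality of a $\GL_n(L)$-representation of depth $r$ grows with $r$, so $[\QQ(\pi_\pp):\QQ]\le A$ forces $r \le r_0(A,n)$. For the unitary group at non-split $\pp$ one reduces to $\GL_n$ over $E_\pp$, or uses the analogous depth bound. Having bounded the depth, I would use that the set of tempered $\GL_n(L)$-representations of depth $\le r_0$ and bounded central-character-restriction lies in finitely many Bernstein components, hence (by Lemma \ref{tempered} and the orbifold description sketched in the commented-out passage) in a finite union of compact orbifolds $\OO$. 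On each such orbifold the locus $\{[\QQ(\pi_\pp):\QQ]\le A\}$ is a \emph{finite} set: it is contained in the set of points fixed (up to isomorphism) by a finite-index subgroup of $\Aut(\CC)$, which on the torus-quotient $\OO$ forces the discrete-series data and the unramified twist to be algebraic of bounded degree, and only finitely many such twists are compatible with a fixed unitary central character (the orbifold is compact). Summing over the finitely many orbifolds gives $|\mathcal Z_\pp(A,\xi)|<\infty$.

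I would organize the write-up as: (1) $\pi_\pp$ tempered, with constrained central character; (2) invoke the depth bound to get depth $\le r_0(A,n)$; (3) finitely many Bernstein components/orbifolds of bounded depth; (4) on a fixed compact orbifold, bounded field of rationality is a finite condition; (5) conclude by summing. The main obstacle I expect is step (2): making precise and citing the statement that small field of rationality forces bounded depth for tempered (not merely discrete series) representations, and handling the non-split places where $G(F_\pp)$ is a genuine unitary group rather than $\GL_n$—here one either cites the analogous depth estimates or reduces the tempered case to the discrete-series building blocks $\Sp(\pi',d)$ via Lemma \ref{tempered} and applies the $\GL_n$ bounds to each factor, together with the observation that $\QQ(\pi_\pp)$ contains (a subfield controlling) the $\QQ(\omega_i)$. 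The compactness of the orbifolds, which makes ``finitely many algebraic twists of bounded degree'' legitimate, is the other point to state carefully, but it is standard.
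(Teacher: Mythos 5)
The paper does not prove this proposition from scratch: its ``proof'' is the single line ``This is Corollary 5.7 of \cite{ST13}.'' So your attempt at a self-contained argument is necessarily a different route, but as written it has a genuine gap at its decisive step, namely step (4). On a compact orbifold of tempered representations, the condition $[\QQ(\pi_\pp):\QQ]\le A$ is \emph{not} a finite condition, and compactness does not turn ``algebraic of bounded degree'' into finiteness. Concretely, the unramified principal series of $\GL_2(\QQ_p)$ with Satake parameters $\{\alpha,\alpha^{-1}\}$, $|\alpha|=1$, has field of rationality $\QQ(\alpha+\alpha^{-1})$, and there are infinitely many algebraic numbers of degree $\le A$ in $[-2,2]$; similarly there are infinitely many unramified twists $\St(\chi)$ with $\chi(\varpi)$ an algebraic number of degree $\le 2$ on the unit circle. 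Temperedness and a constrained central character do not remove this continuum, so ``bounded field of rationality is a finite condition on each compact orbifold'' is simply false. The finiteness really comes from using the global hypothesis for much more than temperedness: for regular-weight $\xi$-cohomological cuspidal representations the $\Aut(\CC)$-conjugates are again of the same type, and the twist/support parameters at $\pp$ are algebraic numbers whose entire Galois orbits satisfy Weil-type bounds (integrality of Hecke eigenvalues on cohomology together with temperedness of the conjugates), so a Northcott-type argument pins them into a finite set. That global algebraicity-plus-Northcott input is exactly what Corollary 5.7 of \cite{ST13} packages, and it is absent from your argument.

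A secondary problem is step (2). You attribute to Section 5 of this paper the statement that the degree of the field of rationality grows with the depth at a \emph{fixed} prime; Section 5 only proves the lower bound $(p-1)/n$ for positive-depth tempered representations, which is useful when one may choose $p$ large but is vacuous at a fixed $\pp$ of small residue characteristic. Bounding the depth of elements of $\mathcal{Z}_\pp(A,\xi)$ at a fixed $\pp$ requires the sharper conductor-versus-rationality estimates of \cite{ST13}, not anything proved in this paper. Your reductions via Proposition \ref{RHWProps}(c) and Lemma \ref{tempered} to discrete-series building blocks, and the passage to $\GL_n$ at split places, are fine as far as they go, but without the global algebraicity input the per-orbifold finiteness claim fails, so the proposal does not establish Proposition \ref{Finiteness}.
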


\begin{proof} This is Corollary 5.7 of \cite{ST13}.\end{proof}

\begin{prop} \label{LargePPower} Fix $A \in \ZZ_{\geq 1}$, $\epsilon > 0$, and a finite prime $\pp$ of $F$.  There is an $r_0 \in \ZZ_{\geq 1}$ (depending on $A,\, \epsilon,\, \pp$) such that, for any ideal $\nn$ of $F$ with $\pp^{r_0} \mid \nn$, we have
$$\frac{|\Fam^{\leq A}(\xi,\,\chi,\, \Gamma(\nn))|}
	{|\Fam(\xi,\,\chi,\, \Gamma(\nn))|} < 
\epsilon.$$
\end{prop}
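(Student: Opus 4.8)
The plan is to reduce the assertion to a statement about $G(F_\pp)$-representations and then invoke Proposition \ref{DiscreteSeriesSmallCor}; the real work is making the reduction uniform in the prime-to-$\pp$ part of $\nn$. Write $\nn = \pp^r\mm$ with $r = \ord_\pp(\nn)$, $(\mm,\pp) = 1$, and set $B = \mathcal{Z}_\pp(A,\xi)\cap\Pi(G(F_\pp),\chi_\pp)$, which is finite by Proposition \ref{Finiteness}. Since $\QQ(\pi)$ is the compositum of the local fields $\QQ(\pi_\qq)$, every $\pi$ counted in $\Fam^{\leq A}(\xi,\chi,\Gamma(\nn))$ has $\pi_\pp\in B$ (it is potentially $\xi$-cohomological, being the $\pp$-component of a $\xi$-cohomological automorphic representation). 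As $(\pi^\infty)^{\Gamma(\nn)} = \pi_\pp^{\Gamma(\pp^r)}\otimes(\pi^{\pp,\infty})^{\Gamma(\mm)}$, grouping the representations of $\Fam(\xi,\chi,\Gamma(\nn))$ by their $\pp$-component (in the spirit of Remark \ref{ComparisonRem}) yields
$$|\Fam^{\leq A}(\xi,\chi,\Gamma(\nn))| \le \sum_{\sigma\in B}\dim\sigma^{\Gamma(\pp^r)}\,C_\sigma(\mm), \qquad |\Fam(\xi,\chi,\Gamma(\nn))| = \sum_\sigma\dim\sigma^{\Gamma(\pp^r)}\,C_\sigma(\mm),$$
where $C_\sigma(\mm) = \sum_{\pi :\, \pi_\pp\cong\sigma} m_{\disc}(\pi)\dim(\pi^{\pp,\infty})^{\Gamma(\mm)}$, the sum being over $\xi$-cohomological $\pi$ of central character $\chi$; each $C_\sigma(\mm)$ is finite (such $\pi$ lie in $\Fam(\xi,\chi,\Gamma(\pp^{c_0}\mm))$ for any $c_0$ with $\sigma^{\Gamma(\pp^{c_0})}\neq 0$), and $\dim\sigma^{\Gamma(\pp^c)}\,C_\sigma(\mm) \le |\Fam(\xi,\chi,\Gamma(\pp^c\mm))|$ for every $c$.

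The crux is to bound $|\Fam(\xi,\chi,\Gamma(\pp^c\mm))|$ from above and below by multiples of $V(\pp^c)V(\mm)$ uniformly in $\mm$, where $V(\mathfrak{a}) := \phi^\infty_{\mathfrak{a}}(1)$ for the averaged idempotent $\phi^\infty_{\mathfrak{a}}$ of $\Gamma(\mathfrak{a})$, so that $V(\pp^c) = \vol(\Gamma(\pp^c)Z/Z)^{-1}$ and $V(\pp^c\mm) = V(\pp^c)V(\mm)$. Feeding $\phi^\infty_{\pp^c\mm}\cdot\phi_\xi$ into the fixed-central-character trace formula (Theorem \ref{TraceFormula}), the spectral side equals $(-1)^{q(G)}|\Fam(\xi,\chi,\Gamma(\pp^c\mm))|$, and the identity term of the geometric side equals $(-1)^{q(G)}\dim(\xi)\,\vol(Z(\AAA)G(F)\bs G(\AAA))\,V(\pp^c)V(\mm)$. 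Every other term is attached to a semisimple non-central $\gamma$; cuspidality of $\phi_\xi$ and the anisotropy modulo center of a maximal torus of $G_\infty$ force such a $\gamma$ to be elliptic at $\infty$, and for its orbital integral to be nonzero $\gamma$ must be integral modulo the center at every finite place — so (its eigenvalues being, up to a central unit, algebraic integers of bounded degree and archimedean absolute value one, hence roots of unity) only finitely many conjugacy classes occur, \emph{independently of $\mm$}. Since $N(\pp^c\mm)\ge N(\pp)^c$, Lemma 8.4 of \cite{ST12} applied to $G^{\ad}$ provides $r_1 = r_1(\pp,\xi)$ such that for $c\ge r_1$ none of these $\gamma$ conjugates into $Z\cdot\Gamma(\pp^c\mm)$, killing all non-identity terms; and, as in the proof of Theorem 9.16 of \cite{ST12}, the $M\ne G$ terms at $\gamma = 1_G$ are $o_c(1)$ times the identity term, uniformly in $\mm$ (their constant terms decay in the exponent of $\pp$). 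Hence for $c\ge r_1$ there are $0 < c_- \le c_+$ depending only on $\pp,\xi,\chi$ with $c_-\,V(\pp^c)V(\mm) \le |\Fam(\xi,\chi,\Gamma(\pp^c\mm))| \le c_+\,V(\pp^c)V(\mm)$.

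Now fix $c_B\ge r_1$ with $\dim\sigma^{\Gamma(\pp^{c_B})}\ge 1$ for all $\sigma\in B$ (possible since $B$ is finite); then $C_\sigma(\mm) \le |\Fam(\xi,\chi,\Gamma(\pp^{c_B}\mm))| \le c_+\,V(\pp^{c_B})V(\mm)$ for $\sigma\in B$. Applying the lower bound to the denominator with $c = r$ (valid for $r\ge r_1$), the factors $V(\mm)$ cancel and
$$\frac{|\Fam^{\leq A}(\xi,\chi,\Gamma(\nn))|}{|\Fam(\xi,\chi,\Gamma(\nn))|} \le \frac{c_+}{c_-}\,V(\pp^{c_B})\cdot\frac{\sum_{\sigma\in B}\dim\sigma^{\Gamma(\pp^r)}}{V(\pp^r)}.$$
Since $V(\pp^r) = \vol(\Gamma(\pp^r)Z/Z)^{-1}$ is, up to the fixed normalization, the total fixed-central-character Plancherel mass of $\wh e_{\Gamma(\pp^r),\chi_\pp}$ and $B$ is a fixed finite set of $G(F_\pp)$-representations with $[\QQ(\sigma):\QQ]\le A$, Proposition \ref{DiscreteSeriesSmallCor} — which asserts precisely that such representations carry a vanishing fraction of that mass as $r\to\infty$ — gives $\sum_{\sigma\in B}\dim\sigma^{\Gamma(\pp^r)}/V(\pp^r)\to 0$. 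Choosing $r_0\ge\max(r_1,c_B)$ so that the right-hand side is $<\epsilon$ for all $r\ge r_0$ completes the proof.

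The main obstacle is the uniformity in $\mm$ in the middle step: it requires the geometric side of the trace formula to be dominated by its identity term as soon as the exponent of $\pp$ in the level is large, regardless of the other prime divisors of the level, which is what Lemma 8.4 of \cite{ST12} together with the estimates in the proof of Theorem 9.16 of \cite{ST12} deliver. The deeper local fact — that representations of small field of rationality are genuinely sparse when counted with their fixed-vector/formal-degree multiplicities — is deferred to Proposition \ref{DiscreteSeriesSmallCor}, proved in Sections 5 and 6.
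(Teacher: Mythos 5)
Your reduction to the $\pp$-component (grouping by $\sigma$, factoring the fixed vectors, and bounding $C_\sigma(\mm)$ via a family of level $\pp^{c_B}\mm$) and your uniform-in-$\mm$ comparison $c_-V(\pp^c)V(\mm)\le|\Fam(\xi,\chi,\Gamma(\pp^c\mm))|\le c_+V(\pp^c)V(\mm)$ are sound and are essentially what the paper does (it extracts the same uniformity from Plancherel equidistribution with $S=\emptyset$ rather than re-running the trace formula). The gap is in your last step: the claim that Proposition \ref{DiscreteSeriesSmallCor} gives $\sum_{\sigma\in B}\dim\sigma^{\Gamma(\pp^r)}/V(\pp^r)\to 0$ as $r\to\infty$. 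That proposition does not say this, and cannot be made to. First, it only applies when the residue characteristic exceeds a threshold $P_0(\epsilon,A)$, when $U(n)$ splits at $\pp$ and $\chi_\pp$ is unramified, whereas Proposition \ref{LargePPower} fixes an \emph{arbitrary} finite prime $\pp$; at a fixed prime only some fixed $\epsilon_0$ (if any) is available, since $P_0$ grows as $\epsilon$ shrinks. Second, it is a bound, uniform in $r$, on the sum over \emph{all} discrete series with small field of rationality weighted by the formal degree; for a single $\sigma\in B$ it yields at best $\dim\sigma^{\Gamma(\pp^r)}\le(\epsilon_0/\deg\sigma)\,\vol(\Gamma(\pp^r)Z/Z)^{-1}$, a bounded ratio rather than one tending to $0$, and after multiplying by your constant $\frac{c_+}{c_-}V(\pp^{c_B})$ this does not become $<\epsilon$. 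Third, $B$ may contain tempered representations that are not discrete series (irreducible inductions from proper Levis), about which Proposition \ref{DiscreteSeriesSmallCor} says nothing at all.

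What you actually need at this point is a statement about each \emph{fixed} $\sigma\in B$: by Harish-Chandra's local character expansion there are constants $C_\sigma$ and an exponent $d_\sigma$, bounded by the maximal dimension $d$ of a nilpotent orbit in $\Lie(G)$, with $\dim\sigma^{\Gamma(\pp^r)}\sim C_\sigma q^{d_\sigma r}$; since $d<\dim(G/Z)$ and $V(\pp^r)\asymp q^{r\dim(G/Z)}$, the ratio $\sum_{\sigma\in B}\dim\sigma^{\Gamma(\pp^r)}/V(\pp^r)$ does tend to $0$, your final display closes, and the argument then works at every finite prime $\pp$, recovering the full strength of the statement. This wavefront-type growth bound is exactly the ingredient the paper's proof uses in place of Proposition \ref{DiscreteSeriesSmallCor}; the latter is reserved for the complementary regime in the main theorem, where $\ord_\pp(\nn_\lambda)$ stays bounded.
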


\begin{rem} The reader should compare the first statement to the statement of Theorem 6.1 (ii) of \cite{ST13}.  We believe there to be a small mistake in the proof contained in that paper: namely, the authors forget to count representations $\pi$ with appropriate multiplicity. Fortunately, the proof is correct in spirit and the only missing step is to note a bound on the growth of $\dim \pi_\pp^{\Gamma(\pp^r)}$; the proof below should correct this minor oversight.
\end{rem}

\begin{proof} Write $\Fam_{\nn} = \Fam(\xi,\,\chi,\,\Gamma(\nn))$.  By Corollary \ref{Finiteness} there is a finite set $\mathcal{Z}$ (depending on $\xi$ and $A$) of potentially $\xi$-cohomological representations $\pi_\pp$ with $[\QQ(\pi_\pp):\QQ] \leq A$.  By Harish-Chandra's local character expansion, for each $\pi_\pp$, there are constants $C_{\pi_\pp}$ and $d_{\pi_\pp}$ such that $\dim \pi_\pp^{\Gamma(\pp^r)} \sim C_{\pi_\pp} q^{d_{\pi_\pp}}$.

Let $C_\xi = \vol(G(F)Z(\AAA)\bs G(\AAA)) \dim(\xi)$.  By Proposition \ref{PlancherelTheorem} applied to $S = \emptyset$, we have $|\Fam_\nn| \sim \frac{C_\xi}{\vol(\Gamma(\nn)Z/Z)}$.  

Fix a very small $\delta > 0$ and let $r$ be large enough that
\begin{enumerate}[(a)]
	\item every $\pi_\pp\in \mathcal{Z}_\pp$ has a $\Gamma(\pp^r)$-fixed vector,
	\item for $r' \geq r$ and $\pi_\pp \in \mathcal{Z}$, we have 
	$$(1 - \delta)C_{\pi_\pp} q^{d_{\pi_\pp}r}
		< \dim \pi_{\pp}^{\Gamma(\pp^r)} < 
		 (1 + \delta)C_{\pi_\pp} q^{d_{\pi_\pp}r},$$
	and
	\item for any $\nn$ with $\pp^r \mid \nn$, we have
	$$C_\xi(1 - \delta) < \vol (\Gamma(\nn)Z/Z) \cdot |\Fam_\nn| < C_\xi(1 + \delta).$$
\end{enumerate}

Now let $\nn = \pp^{r'} \cdot \ft$, with $\ft$ coprime to $\pp$ and $r' \geq r$. Let $\nn' = \pp^r \ft$, and let $\Fam_{\nn'} = \Fam(\xi,\,\chi,\,\Gamma(\nn'))$.  Assume $\pi \in \Fam_\nn$ with $\pi_\pp \in \mathcal{Z}$.  Then $\pi_\pp$ has a $\Gamma(\pp^r)$-fixed vector (by (a)) and so $\pi$ occurs in $\Fam_{\nn'}$.  For such a $\pi$ we have
$$a_{\nn}(\pi) < (1 + 3\delta)\cdot q^{d_{\pi_\pp}(r' - r)}\cdot a_{\nn'}(\pi) \,\,\,\,\,\,\,\,\, \text{by (b)}.$$
(Here $a_\nn,\, a_{\nn'}$ denote the multiplicities in $\Fam_{\nn},\, \Fam_{\nn'}$ respectively).

Moreover, by (c) we have
$$\sum_{\pi_\pp\in \mathcal{Z}} a_{\nn'}(\pi) < |\Fam_{\nn'}| \leq (1 + \delta) \frac{C_\xi}{\vol(\Gamma(\nn')Z/Z)}$$
so that
$$\sum_{\pi_\pp\in \mathcal{Z}} a_\nn(\pi) < (1 + 5\delta) \frac{C_{\xi}}{\vol(\Gamma(\nn')Z/Z)} q^{d_{\pi_\pp}(r' - r)}.$$

We note here that $d_{\pi_\pp}$ is bounded above by some $d$, the maximal dimension of a nilpotent orbit in $\Lie(G)$, so that we have
$$\sum_{\pi_\pp\in \mathcal{Z}} a_\nn(\pi) < (1 + 5\delta) \frac{C_{\xi}}{\vol(\Gamma(\nn')Z/Z)} q^{d(r' - r)}.$$
Moreover $|\Fam_\nn|$ is bounded above by $(1 - \delta)\frac{C_\xi}{\vol(\Gamma(\nn)Z/Z)}$.  Therefore, we have 
$$\frac{\left(\sum_{\pi_\pp\in \mathcal{Z}} a_\nn(\pi)\right)}
	{|\Fam_\nn|}
	< (1 + 7\delta) \frac{\vol(\Gamma(\nn)Z/Z)}{\vol(\Gamma(\nn') Z/Z)} q^{d(r' - r)}
	= (1 + 7\delta) q^{(d - \dim(G/Z))(r' - r)}$$
which can be made arbitrarily small if $r'$ is large enough, since $d < \dim(G/Z)$ for $G$ which are reductive and non-abelian.
\end{proof}

\subsection{Contingent completion of the proof}

Throughout this section, we will assume the following:

\begin{prop} \label{DiscreteSeriesSmallCor} Fix $\epsilon > 0$ and $A \in \ZZ_{\geq 1}$.  There is a $P_0\in \ZZ_{\geq 1}$ such that, for a $\pp$-adic field $L$ with residue characteristic $p > P_0$ and any unramified character $\chi: L^\times \to \CC^\times$, the following holds:

Then for any $r$
\begin{equation} 
	\sum_{\substack{
	\pi \in \Pi^{ds}(\GL_n(L),\chi)
	\\ [\QQ(\pi):\QQ] \leq A}}
		\deg(\pi)\cdot \dim \pi^{\Gamma}
\leq \epsilon \cdot \vol(\Gamma Z/Z)^{-1}.
\end{equation}
\end{prop}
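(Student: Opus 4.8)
The plan is to combine the fixed‑central‑character Plancherel formalism with an explicit count of the representations in question. Write $\Gamma=\Gamma(\pp^r)$. Since $\chi$ is unramified it is trivial on $\Gamma\cap Z=1+\pp^r$, so $e_{\Gamma,\chi}\in\HH(\GL_n(L),\chi)$ is a nonzero idempotent with $e_{\Gamma,\chi}(1)=\vol(\Gamma Z/Z)^{-1}$; we may assume $r\ge1$, since for $r=0$ no discrete series of $\GL_n(L)$ with $n\ge2$ has a $\GL_n(\oo)$‑fixed vector, so the sum is empty. First I would reduce to supercuspidals: by the classification of discrete series, $\pi\cong\Sp(\pi',d)$ for a unique $d\mid n$ and a supercuspidal $\pi'$ of $\GL_{n/d}(L)$, and since parabolic induction, the twists $|\det|^{s}$, and passage to the unique irreducible quotient are all $\Aut(\CC)$‑equivariant, $^{\sigma}\Sp(\pi',d)\cong\Sp({}^{\sigma}\pi',d)$; by the uniqueness clause in the classification this gives $\Stab(\Sp(\pi',d))=\Stab(\pi')$, hence $\QQ(\Sp(\pi',d))=\QQ(\pi')$. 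So it suffices to control supercuspidals $\pi'$ of $\GL_m(L)$, $m\mid n$, with $[\QQ(\pi'):\QQ]\le A$, and for each to bound $\deg(\Sp(\pi',d))\cdot\dim\Sp(\pi',d)^{\Gamma(\pp^r)}$.

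\textbf{Counting.} For residue characteristic $p$ above a bound $P_1(n,A)$, I expect that no positive‑depth supercuspidal $\pi'$ of $\GL_m(L)$ with $m\le n$ has $[\QQ(\pi'):\QQ]\le A$: as $p\nmid m$ once $p>n$, such a $\pi'$ is attached to an admissible pair $(E/L,\theta)$ with $[E:L]=m$ and $\theta$ nontrivial on a layer $(1+\pp_E^{\ell})/(1+\pp_E^{\ell+1})$, an elementary abelian $p$‑group, so $\QQ(\zeta_p)\subseteq\QQ(\theta)$; since $\Stab(\pi')$ maps into the stabilizer in $\Gal(\QQ(\theta)/\QQ)$ of the ($\le m$‑element) orbit of $\theta$, one gets $[\QQ(\pi'):\QQ]\ge[\QQ(\theta):\QQ]/m\ge(p-1)/n>A$ once $p>P_1(n,A)$. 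This is the field‑of‑rationality lower bound of Section~5, in which Murnaghan's character expansion is used to make the constant explicit. For $p>P_1(n,A)$ the remaining cases are $\pi=\Sp(\pi'_\theta,d)$ with $\pi'_\theta$ a depth‑zero supercuspidal of $\GL_{n/d}(L)$ attached to a regular character $\theta$ of $\FF_{q^{n/d}}^{\times}$; here $\Stab(\pi'_\theta)$ corresponds, under $\Aut(\CC)\twoheadrightarrow\Gal(\QQ(\zeta_e)/\QQ)$ with $e=\ord(\theta)$, to the cyclic subgroup $\langle q\rangle\subseteq(\ZZ/e)^{\times}$, so $[\QQ(\pi'_\theta):\QQ]=\phi(e)/\ord_e(q)=\phi(e)/(n/d)$, the last step using that $\theta$ is regular. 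Thus $[\QQ(\pi):\QQ]\le A$ forces $\phi(e)\le nA$, so $e$ and $d$ lie in a finite set depending only on $n$ and $A$, and the number of such $\pi$ is at most some $M(n,A)$ independent of $L$ and $\chi$.

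\textbf{The per‑representation estimate.} It then remains to prove that for every depth‑zero discrete series $\pi=\Sp(\pi'_\theta,d)$ of $\GL_n(L)$ and every $r\ge1$,
$$\deg(\pi)\cdot\dim\pi^{\Gamma(\pp^r)}\cdot\vol(\Gamma(\pp^r)Z/Z)\ \le\ C(n)\,q^{1-n}\ \le\ C(n)\,q^{-1},$$
with $C(n)$ depending only on $n$. I would assemble this from three inputs. (a) Aubert--Plymen's formal‑degree formula: for the normalization $\vol(\GL_n(\oo)Z/Z)=1$ it expresses $\deg(\Sp(\pi'_\theta,d))$ as an explicit rational function of $q$, independent of $\theta$, of the shape $\tfrac1d(\dim\sigma_\theta)^{d}\cdot[\GL_n(\FF_q):P(\FF_q)]$ up to a bounded factor, where $\sigma_\theta$ is the Deligne--Lusztig cuspidal of $\GL_{n/d}(\FF_q)$ and $P$ has Levi $\GL_{n/d}(\FF_q)^{d}$; with $\dim\sigma_\theta\asymp q^{\binom{n/d}{2}}$ and the identity $d\binom{n/d}{2}+(n/d)^2\binom d2=\binom n2$ (using $n=(n/d)d$), this gives $\deg(\pi)\le C_1(n)\,q^{\binom n2}$. (b) Murnaghan's asymptotic character expansion: every discrete series of $\GL_n(L)$ is generic, so the leading term of its local character expansion is the regular nilpotent orbit with coefficient $1$, and Murnaghan's explicit control of the subleading coefficients yields $\dim\pi^{\Gamma(\pp^r)}\le C_2(n)\,q^{r\binom n2}$ for all $r\ge1$. (c) The elementary identity $\vol(\Gamma(\pp^r)Z/Z)^{-1}=q^{(r-1)(n^2-1)}\,|\GL_n(\FF_q)|/(q-1)$, so $\vol(\Gamma(\pp^r)Z/Z)\le C_3(n)\,q^{-r(n^2-1)}$. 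The exponent of $q$ in the product of (a)--(c) is $\binom n2+r\bigl(\binom n2-(n^2-1)\bigr)=\binom n2-r\tfrac{(n-1)(n+2)}{2}$, which for $n\ge2$ and $r\ge1$ is maximal at $r=1$ with value $1-n\le-1$; the displayed bound follows with $C(n)=C_1(n)C_2(n)C_3(n)$.

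\textbf{Conclusion and main obstacle.} Putting $P_0=\max\bigl(P_1(n,A),\lceil M(n,A)C(n)/\epsilon\rceil\bigr)$, for any $\pp$‑adic $L$ with residue characteristic $p>P_0$, any unramified $\chi$, and any $r$,
$$\sum_{\substack{\pi\in\Pi^{ds}(\GL_n(L),\chi)\\ [\QQ(\pi):\QQ]\le A}}\deg(\pi)\,\dim\pi^{\Gamma}\ \le\ M(n,A)\,C(n)\,q^{-1}\,\vol(\Gamma Z/Z)^{-1}\ <\ \epsilon\,\vol(\Gamma Z/Z)^{-1},$$
as desired. The main obstacle is input (b): obtaining $\dim\pi^{\Gamma(\pp^r)}\le C_2(n)q^{r\binom n2}$ uniformly in $q$, in $r\ge1$, and over all discrete series requires controlling the subleading Shalika‑germ coefficients in Murnaghan's expansion (and, for the smallest $r$, working within its established range of validity). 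The positive‑depth lower bound on $\QQ(\pi')$ in the second paragraph is the other point requiring genuine input; the reductions of the first paragraph, the counting via $[\QQ(\pi'_\theta):\QQ]=\phi(e)/(n/d)$, and the formal‑degree identity in (a) are routine.
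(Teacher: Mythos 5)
Your overall architecture (exclude positive depth for $p$ large, show the depth-zero discrete series with $[\QQ(\pi):\QQ]\le A$ and fixed central character number at most $M(n,A)$, then beat the volume term by a per-representation estimate) is coherent, and your items (a) and (c) are sound; but there is a genuine gap exactly where you flag it, namely input (b): the uniform bound $\dim\pi^{\Gamma(\pp^r)}\le C_2(n)\,q^{r\binom n2}$ for \emph{every} depth-zero discrete series, every $r\ge1$, with $C_2(n)$ independent of $q$. Knowing that the regular nilpotent coefficient in the local character expansion is $1$ does not give this: $\dim\pi^{\Gamma(\pp^r)}=\tr\pi(e_{\Gamma(\pp^r)})$ is a sum over \emph{all} nilpotent orbits of $c_\OO(\pi)$ times a lattice integral of $\wh\mu_\OO$, and the subleading coefficients themselves grow with $q$ (for a discrete series $c_{\{0\}}(\pi)$ is essentially the formal degree, of size $q^{\binom n2}$), so your inequality requires simultaneous uniform-in-$q$ control of every $c_\OO(\pi)$ and of the integrals of $\wh\mu_\OO$ over $\pp^rM_n(\oo_L)$. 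Neither genericity nor Murnaghan's expansion in the form used here (Theorem \ref{Murnaghan}, which identifies the coefficients with those of $\St_H$ but gives no size estimates) supplies this, and you offer no argument; so the per-representation estimate, which is the crux of the whole proposition, is asserted rather than proved.

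The paper is structured precisely to avoid needing (b). For $d<n$ it never bounds an individual $\dim\pi^{\Gamma(\pp^r)}$: Lemma \ref{equalities} shows that $\deg(\Sp(\pi',d))$ and $\dim\Sp(\pi',d)^{\Gamma(\pp^r)}$ take the \emph{same} value for every depth-zero supercuspidal $\pi'$ of $\GL_{n/d}(L)$ (formal degrees via Aubert--Plymen after computing the torsion number $m$ and the conductor $m^2-m$; fixed-vector dimensions via Murnaghan's expansion about $s_m$, which makes the characters agree on $\Gamma(\pp)$). The full sum over all depth-zero $\Sp(\pi',d)$ with central character $\chi$ is then dominated by $\vol(\Gamma(\pp^r)Z/Z)^{-1}=\mupl_\chi(\wh e_{\Gamma,\chi})$ via the Plancherel formula, and the subsum with $[\QQ(\pi):\QQ]\le A$ equals the \emph{proportion} of such representations times the full sum, which is at most $\epsilon$ for $q$ large by Corollary \ref{CountingCorollary}. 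Only in the Steinberg case $d=n$ (where the fixed-central-character count is bounded by $n^2$ and the proportion argument fails) does the paper prove a direct bound of your type, and it does so by Mackey theory for $\Ind_B^G\one$, not by the character expansion. To salvage your route you would need an honest proof of (b) for $m\ge2$ (e.g.\ by a type-theoretic/Mackey argument for $\Sp(\pi',d)$), or you should switch to the equal-terms-plus-Plancherel-domination argument.
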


Because the proof of the proposition involves delving more deeply into the representation theory of $\GL_n(L)$, we have opted to prove it in the following chapters.  In this section, we will complete the proof of our main theorems, \emph{contingent upon the proposition}.

\begin{proof}[Proof of Theorem \ref{MainThmIntro}]
Fix $A \geq 1$ and $\epsilon > 0$.  We can pick a prime $\pp$ such that
\begin{enumerate}[(a)]
	\item $G$ splits at $\pp$,
	\item $\chi_\pp$ is unramified, and 
	\item The result of Proposition \ref{DiscreteSeriesSmallCor} holds at $\pp$ for all $r$.
\end{enumerate}

Given this $\pp$, let $r_0$ be large enough that the result of Proposition \ref{LargePPower} holds.  Put the ideals $\{\nn_\lambda\}$ into subsequences $S_0,\, S_1,\ldots,\, S_{r_0 - 1},\, S_{r_0}$, such that, for $i < r_0$, $\nn_\lambda$ goes into $S_i$ if $\ord_\pp(\nn_\lambda) = i$.  Put $\nn_\lambda$ into $S_{r_0}$ if $\ord_\pp(\nn_\lambda) \geq r_0$.  We will show that, for any subsequence $S_i$, either $S_i$ is finite or 
\begin{equation}
\label{SubsequenceLimit}
\lim_{\substack{
		\lambda \to \infty
		\\ \nn_\lambda \in S_i}}
	\frac{|\Fam^{\leq A}(\xi,\,\chi,\,\Gamma(\nn_\lambda))|}
		{|\Fam(\xi,\,\chi,\,\Gamma(\nn_\lambda))|}
	< \epsilon.\end{equation}
Since there are finitely many subsequences, this will complete the proof.  Given $i < r_0$, for each $\nn_\lambda \in S_i$, the $\pp$-component $\Gamma_\pp$ of $\pp$ is equal to $\Gamma(\pp^i)$.

If $i = r_0$, \ref{SubsequenceLimit} follows immediately (in fact, without taking limits) by the result of \ref{LargePPower}.  So let $i < r_0$.  Let $e_{\Gamma(\pp^i),\chi_\pp}$ denote the image of $e_{\Gamma(\pp^i)}$ under the averaging map $\HH(G(F_\pp)) \to \HH(G(F_\pp),\chi_\pp)$.  Let $\mathcal{Z}$ denote the set of representations in $\Pi(G(F_\pp),\, \chi_\pp)$ that are potentially $\xi$-cohomological and that satisfy $[\QQ(\pi_\pp): \QQ] \leq A$. By \ref{Finiteness}, this set is finite; let $\one_{\mathcal{Z}}$ denote its characteristic function.  If $\nn_\lambda' = \nn_\lambda/\pp^i$, then (by Remark \ref{ComparisonRem})
$$\frac{|\Fam^{\leq A}(\xi,\,\chi,\,\Gamma(\nn_\lambda))|}
		{|\Fam(\xi,\,\chi,\,\Gamma(\nn_\lambda))|}
	\leq \frac{\wh \mu_{\Gamma(\nn_\lambda'),\xi,\chi}(\one_{\mathcal{Z}}\wh e_{\Gamma(\pp^i), \chi_\pp})}
			{\wh \mu_{\Gamma(\nn_\lambda'),\xi,\chi}(\wh e_{\Gamma(\pp^i),\chi_\pp})}$$
where $\wh \mu$ is the counting measure as defined \ref{CountingDef}.  By the Plancherel equidistribution theorem \ref{PlancherelTheorem},
$$\lim_{\lambda \to \infty}
	 \frac{\wh \mu_{\nn_\lambda',\xi,\chi}(\one_{\mathcal{Z}}\wh e_{\Gamma(\pp^i), \chi_\pp})}
			{\wh \mu_{\nn_\lambda',\xi,\chi}(\wh e_{\Gamma(\pp^i),\chi_\pp})}
	= \frac{\mupl(\one_{\mathcal{Z}}\wh e_{\Gamma(\pp^i),\chi_\pp})}
		{\mupl(\wh e_{\Gamma(\pp^i),\chi_\pp})}
	= \vol(\Gamma(\pp^i)Z/Z) \cdot \mupl(\one_{\mathcal{Z}}\wh e_{\Gamma(\pp^i),\chi_\pp}).
$$  
Since the set of representations in $\mathcal{Z}$ that are not discrete series representations is finite, its Plancherel measure is zero.  Moreover, because we have chosen $\pp$ so that the result of \ref{DiscreteSeriesSmallCor} holds, we have
$$\vol(\Gamma(\pp^i) Z/Z)
\sum_{\substack{
	\pi \text{ discrete series}
	\\ [\QQ(\pi):\QQ] \leq A}}
		\deg(\pi)\cdot \dim \pi^{\Gamma(\pp^i)}
< \epsilon .$$
Therefore, for high enough $\lambda$ such that $\nn_\lambda\in S_i$ we have
$$\frac{|\Fam^{\leq A}(\xi,\,\chi,\,\Gamma(\nn_\lambda))|}
		{|\Fam(\xi,\,\chi,\,\Gamma(\nn_\lambda))|} < \epsilon$$
finishing the proof.
\end{proof}

The remainder of the paper will be devoted to the proof of Proposition \ref{DiscreteSeriesSmallCor}.  The first step is to determine which discrete-series representations $\pi_\pp$ have small field of rationality; it turns out that if $\pp$ has large enough norm, all such representations have \emph{depth zero}.  Moreover, we will show that if $\pp$ has large enough norm, then `most' depth-zero discrete series representations (in number) have large field of rationality.  We will then have to show that the same is true when we count the discrete-series representation \emph{with multiplicity}.  In particular, we will show the following: if $\pi_i = \Sp(\pi_i',\,d)$ for $i = 1,\, 2$, where $\pi_i'$ are depth-zero supercuspidal representations, then $\deg(\pi_1) = \deg(\pi_2)$, and $\dim \pi_1^{\Gamma(\pp^r)} = \dim \pi_2^{\Gamma(\pp^r)}$.  The proof will require delving deeper into the representation theory of $\GL_n(F_\pp)$.

\section{Fields of Rationality of Tempered $\GL_n(L)$ Representations in the Tame Case}

In this section we compute explicit lower bounds on the degree of the field of rationality of tempered $\GL_n(L)$ representations with positive depth.

Throughout our proofs, we'll use the local Langlands correspondence for $\GL_n$; let $\pi$ be a $\GL_n(L)$ representation and let $\rec(\pi)$ denote the corresponding $n$-dimensional Weil-Deligne representation.  

\begin{lem} \cite[Lem VII.1.6.2]{HT01}  Let $\mathscr{L}$ denote the twisted correspondence
$$\mathscr{L}(\pi) = \rec(\pi)\otimes |\cdot|_{W_L}^{(n-1)/2}.$$
Then 
$$\mathscr{L}(\,^{\sigma}\pi) = \,^{\sigma} \mathscr{L}(\pi).$$
\end{lem}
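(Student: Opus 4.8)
The plan is to deduce the lemma from the uniqueness characterization of the local Langlands correspondence (Henniart's uniqueness theorem, used throughout \cite{HT01}): the system of bijections $\rec = (\rec_n)_n$ — equivalently the twisted system $\mathscr{L}$, with the evident $|\cdot|^{(n-1)/2}$-shift in the normalizations below — is the \emph{unique} family of bijections from irreducible admissible representations of $\GL_n(L)$ to Frobenius-semisimple $n$-dimensional Weil--Deligne representations of $W_L$, as $n$ varies, that is compatible with (i) local class field theory for $n=1$; (ii) twisting by characters of $\GL_1(L)$; (iii) central characters and determinants; and (iv) the $L$- and $\epsilon$-factors of Rankin--Selberg pairs. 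Fixing $\sigma\in\Aut(\CC)$, consider the modified system $\mathscr{L}^{\sigma}(\pi) := {}^{\sigma^{-1}}\mathscr{L}({}^{\sigma}\pi)$ (note ${}^{\sigma}\pi$ is again irreducible admissible, and ${}^{\sigma}(-)$ is exact and preserves Frobenius-semisimplicity on the Galois side, so $\mathscr{L}^{\sigma}$ really is a system of bijections). It then suffices to show that $\mathscr{L}^{\sigma}$ again satisfies (i)--(iv); by uniqueness $\mathscr{L}^{\sigma} = \mathscr{L}$, which is exactly the assertion $\mathscr{L}({}^{\sigma}\pi) = {}^{\sigma}\mathscr{L}(\pi)$.

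Axioms (i)--(iii) are the formal part. For $n = 1$, $\mathscr{L} = \rec$ is local class field theory, normalized so that a uniformizer maps to a geometric Frobenius; this normalization refers in no way to the embedding into $\CC$, and $\rec(\chi) = \chi\circ\Art_L^{-1}$, so $\rec({}^{\sigma}\chi) = \sigma\circ\chi\circ\Art_L^{-1} = {}^{\sigma}\rec(\chi)$ — hence $\mathscr{L}^{\sigma} = \mathscr{L}$ already on $\GL_1(L)$, which gives (i) and, by functoriality of ${}^{\sigma}(-)$ under tensor products, (ii). For (iii) one uses $\det\mathscr{L}(\pi) = \rec(\omega_\pi)\otimes|\cdot|^{n(n-1)/2}$ together with $\omega_{{}^{\sigma}\pi} = {}^{\sigma}\omega_\pi$ and the fact that the integral power $|\cdot|^{n(n-1)/2}$ is fixed by ${}^{\sigma}(-)$.

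The content is in (iv). On the automorphic side, the Rankin--Selberg $L$- and $\epsilon$-factors of a pair $(\pi,\pi')$ are extracted from the Jacquet--Piatetski-Shapiro--Shalika zeta integrals built from Whittaker functions, and these transform equivariantly under $\sigma$: $\sigma\bigl(L(s,\pi\times\pi')\bigr) = L(s,{}^{\sigma}\pi\times{}^{\sigma}\pi')$ and $\sigma\bigl(\epsilon(s,\pi\times\pi',\psi)\bigr) = \epsilon(s,{}^{\sigma}\pi\times{}^{\sigma}\pi',{}^{\sigma}\psi)$, where ${}^{\sigma}\psi = \sigma\circ\psi$ is the additive character obtained by applying $\sigma$ to the root-of-unity values of $\psi$. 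On the Galois side, the Deligne--Langlands local constants satisfy the matching $\sigma\bigl(\epsilon(s,\rho\otimes\rho',\psi)\bigr) = \epsilon(s,{}^{\sigma}\rho\otimes{}^{\sigma}\rho',{}^{\sigma}\psi)$ and $\sigma\bigl(L(s,\rho\otimes\rho')\bigr) = L(s,{}^{\sigma}\rho\otimes{}^{\sigma}\rho')$ (on both sides $\sigma$ also replaces $q^{-s}$ by $\sigma(q^{-s})$, compatibly). Combining: from $\mathscr{L}^{\sigma}(\pi)\otimes\mathscr{L}^{\sigma}(\pi') = {}^{\sigma^{-1}}\bigl(\mathscr{L}({}^{\sigma}\pi)\otimes\mathscr{L}({}^{\sigma}\pi')\bigr)$, applying Galois-side equivariance, then the defining identity (iv) for $\mathscr{L}$ at the pair $({}^{\sigma}\pi,{}^{\sigma}\pi')$, then automorphic-side equivariance, one obtains $L(s,\mathscr{L}^{\sigma}(\pi)\otimes\mathscr{L}^{\sigma}(\pi')) = L(s,\pi\times\pi')$ and the analogous identity for $\epsilon$; that is, $\mathscr{L}^{\sigma}$ satisfies the same (shifted) form of (iv) as $\mathscr{L}$.

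The place where the normalization $|\cdot|^{(n-1)/2}$ is genuinely needed, and the main obstacle in making the above clean, is the bookkeeping of half-integral powers of $q$: the bare correspondence $\rec$ is \emph{not} $\sigma$-equivariant — for $n$ even it fails by the quadratic unramified twist governed by $\sigma(q^{1/2}) = \pm q^{1/2}$, which enters through the $q^{-s}$-factors in the functional equations and the half-Tate-twists appearing in $\rec(\Sp(\rho,a))$ — and one must verify that this twist is cancelled exactly by the $(n-1)/2$ shift built into $\mathscr{L}$. An equivalent, more hands-on route is to prove the statement first for $\pi$ supercuspidal, where Henniart's characterization by $\epsilon$-factors of pairs of supercuspidals already pins $\rec$ down and the computation above applies verbatim, and then bootstrap to general irreducible $\pi$ through the Bernstein--Zelevinsky and Langlands classifications, checking at each construction ($\Sp(\rho,a)$, twisting an essentially square-integrable representation, forming a Langlands quotient) that ${}^{\sigma}(-)$ commutes with the construction and that the $|\cdot|^{(n-1)/2}$ twists line up on both sides. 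I expect the supercuspidal case — the precise matching of the $\sigma$-actions on $\epsilon$-factors — to be the crux.
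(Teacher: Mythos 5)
The paper itself does not prove this statement; it is quoted verbatim from \cite[Lem.\ VII.1.6.2]{HT01}, so any argument here is a re-proof of a cited result. Your overall strategy (Henniart's uniqueness characterization via $\epsilon$- and $L$-factors of pairs, plus $\sigma$-equivariance of both sides of the functional equations, with reduction to supercuspidals and bootstrapping through the Bernstein--Zelevinsky/Langlands classifications) is indeed the standard route in the literature, so the plan is aimed in the right direction.

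However, there is a genuine gap, and it sits exactly at the step you call the crux. The clean equivariances you assert in the third paragraph --- $\sigma\bigl(L(s,\pi\times\pi')\bigr)=L(s,{}^{\sigma}\pi\times{}^{\sigma}\pi')$, $\sigma\bigl(\epsilon(s,\pi\times\pi',\psi)\bigr)=\epsilon(s,{}^{\sigma}\pi\times{}^{\sigma}\pi',{}^{\sigma}\psi)$, and their Galois-side analogues --- cannot all be literally true: if they were, your uniqueness argument would show that the \emph{untwisted} $\rec$ is $\Aut(\CC)$-equivariant, contradicting your own (correct) remark that for even $n$ it fails by an unramified quadratic twist governed by $\sigma(q^{1/2})/q^{1/2}$. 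The failure enters precisely through the half-integral powers of $q$ hidden in the normalizations: ${}^{\sigma}(\cdot)$ does not commute with normalized parabolic induction (the modulus factor $\delta_P^{1/2}$ is moved by $\sigma$), the Jacquet--Piatetski-Shapiro--Shalika zeta integrals and the self-dual measures in the Deligne--Langlands constants produce $\epsilon$-factors whose absolute values are half-integral powers of $q$, and the honest $\sigma$-transformation laws therefore carry correction factors (powers of $\sigma(q^{1/2})/q^{1/2}$ tied to conductor exponents and central characters). Establishing those corrected transformation laws and verifying that the $|\cdot|^{(n-1)/2}$ shift in $\mathscr{L}$ absorbs them exactly \emph{is} the content of the lemma; your write-up both misstates them in the naive form and then defers the verification (``I expect \dots to be the crux''), so the central step is not actually carried out. (Two smaller points you should also make explicit if you complete this route: the uniqueness characterization must be checked to be insensitive to replacing $\psi$ by ${}^{\sigma}\psi$, and in the bootstrap one must track that ${}^{\sigma}\Sp(\rho,a)$ is $\Sp({}^{\sigma}\rho\otimes\eta,a)$ for the appropriate unramified twist $\eta$, not $\Sp({}^{\sigma}\rho,a)$ on the nose.)
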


Since the Langlands correspondence is one-to-one, we have
$$\QQ(\pi) = \QQ(\mathscr{L}(\pi)) \supseteq \QQ\left(\mathscr{L}(\pi)|_{I_L}\right) = \QQ\left(\rec(\pi)|_{I_L}\right).$$

\subsection{Supercuspidal representations in the tame case}

Throughout this subsection, let $L$ have residue characteristic $p > n$ and let $\pi$ be a supercuspidal $\GL_n(L)$ representation; then $\rec(\pi)$ is an irreducible Weil-Deligne representation (so in particular, its monodromy is trivial and the associated $W_L$ representation is irreducible).

\begin{defn} (See 2.2.1 of \cite{Moy86} or the introduction of \cite{How77}).  Let $L'/L$ be an extension of degree $n$ and let $\theta: L'^\times \to \CC^\times$ be a quasi-character.  We say $\theta$ is \emph{admissible} if 
\begin{enumerate}[(a)]
	\item $\theta$ does not factor through $N_{L'/L''}$ for any proper subextension $L''/L$, and
	\item if $\theta|_{1 + \pp_{L'}}$ factors through $N_{L'/L''}$ for some $L''$, then $L''/L'$ is unramified.
\end{enumerate}
In this case, we call the pair $(L',\,\theta)$ an \emph{admissible pair} over $L$ of degree $n$.
\end{defn}

We say $\theta_1: L_1'^\times \to \CC^\times$ and $\theta_2: L_2'^\times \to \CC^\times$ are \emph{conjugate} if there is an isomorphism $\tau: L_1\to L_2$ (over $L$) such that $\theta_1 = \theta_2 \circ \tau$.

Throughout, by abuse of notation, we will identify a character $\theta$ with the character of $W_{L'}$ associated to it by local class field theory.  The following proposition is an amalgamation of Corollary 2.1.4 and Theorem 2.2.2 of \cite{Moy86}:
\begin{prop} There is a bijection between 
$$\text{(conjugacy classes of admissible pairs over $L$ of degree $n$)}$$
and
$$\text{(irreducible $W_L$ representations of dimension $n$)}$$
given by
$$(L',\,\theta) \mapsto \rho_\theta = \Ind_{W_{L'}}^{W_L} \theta.$$
\end{prop}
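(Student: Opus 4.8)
The plan is to verify four things about the assignment $(L',\theta)\mapsto\rho_\theta=\Ind_{W_{L'}}^{W_L}\theta$: that $\rho_\theta$ is an irreducible $W_L$-representation of dimension $n$; that it depends only on the conjugacy class of $(L',\theta)$; that every irreducible $n$-dimensional representation of $W_L$ arises this way; and that $\rho_\theta\cong\rho_{\theta'}$ forces $(L',\theta)$ and $(L'',\theta')$ to be conjugate. Throughout I pass between quasi-characters of $L'^\times$ and characters of $W_{L'}$ via local class field theory, using that $\theta$ factors through a norm $N_{L'/M}$ (for $L\subseteq M\subseteq L'$) if and only if the associated character of $W_{L'}$ is the restriction of a character of $W_M$.

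\emph{Irreducibility and conjugation-invariance.} The dimension of $\rho_\theta$ is $[W_L:W_{L'}]=[L':L]=n$. For irreducibility I would invoke Mackey's criterion: $\Ind_{W_{L'}}^{W_L}\theta$ is irreducible if and only if, for every $g\in W_L\setminus W_{L'}$, the character $\theta$ and its $g$-conjugate disagree on $W_{L'}\cap gW_{L'}g^{-1}$. Since $p>n$ the extension $L'/L$ is tame, so these intersections are the Weil groups of the fields lying between $L$ and $L'$ (up to conjugacy); translating the required disagreement through class field theory, it becomes exactly the assertion that $\theta$ does not descend through a norm to a proper subextension of $L'$---that is, admissibility condition~(a), with condition~(b), automatic when $p>n$, taking care of the ramified possibilities. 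Conjugation-invariance is formal: an $L$-isomorphism $\tau\colon L_1'\to L_2'$ extends to $\sigma\in\Gal(\bar L/L)$, and a lift of $\sigma$ to $W_L$ conjugates $W_{L_1'}$ onto $W_{L_2'}$ while carrying $\theta_2\circ\tau$ to a conjugate of $\theta_2$, so $\Ind_{W_{L_1'}}^{W_L}(\theta_2\circ\tau)\cong\Ind_{W_{L_2'}}^{W_L}\theta_2$.

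\emph{Surjectivity.} Let $\rho$ be irreducible of dimension $n$. The crucial input is the standard tame-case fact that, since $p\nmid n$, $\rho$ is \emph{monomial}: $\rho\cong\Ind_{W_{L'}}^{W_L}\theta$ for a character $\theta$ of $W_{L'}$ with $[L':L]=n$. This is proved by a Clifford-theoretic descent: the wild inertia $P\trianglelefteq W_L$ is pro-$p$, so $\rho|_P$ is a sum of characters (an irreducible representation of $P$ has $p$-power dimension, impossible for $1<\dim\le n<p$); $\rho$ is then induced from the stabilizer in $W_L$ of one of these characters, and continuing the descent---through the tame quotient when necessary, and always along subgroups of index dividing $n$, hence prime to $p$---one reaches a one-dimensional representation. (It is the inequality $n<p$ that rules out the non-monomial irreducibles a general prosolvable group may carry; the statement is false without tameness.) Given such a presentation, I would then shrink $L'$ to the smallest intermediate field through which $\theta$ factors as a norm; this does not alter $\rho_\theta$ and makes $(L',\theta)$ satisfy~(a), while~(b) is automatic in the tame case. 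Thus $\rho\cong\rho_\theta$ with $(L',\theta)$ admissible.

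\emph{Injectivity.} Suppose $\rho_{\theta_1}\cong\rho_{\theta_2}$ with both pairs admissible. I would analyse $\rho_{\theta_2}|_{W_{L_2'}}$, equivalently $\Hom_{W_L}(\rho_{\theta_1},\rho_{\theta_2})$, via Mackey's formula, which expresses it as a sum over double cosets $W_{L_2'}\backslash W_L/W_{L_1'}$: a contributing coset produces $g$ with $\theta_1^g$ and $\theta_2$ agreeing on $W_{L_2'}\cap gW_{L_1'}g^{-1}$. The heart of the matter---and the step I expect to be the main obstacle---is to use admissibility together with the tame structure of these subextensions (control of residue degrees, ramification indices, and the depths of the $\theta_i$) to exclude all ``spurious'' cosets, forcing $gW_{L_1'}g^{-1}=W_{L_2'}$; then $L_1'$ and $L_2'$ are $L$-conjugate and $\theta_1,\theta_2$ correspond under the conjugation, i.e.\ $(L_1',\theta_1)\sim(L_2',\theta_2)$. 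This last analysis is the substance of Howe's construction and its streamlining by Moy; a complete treatment of all four steps is given by Corollary~2.1.4 and Theorem~2.2.2 of \cite{Moy86} (cf.\ also \cite{How77}).
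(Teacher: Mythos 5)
The paper itself does not prove this proposition: it is imported wholesale as ``an amalgamation of Corollary 2.1.4 and Theorem 2.2.2 of \cite{Moy86}'' (following \cite{How77}), and you too defer the decisive injectivity analysis to exactly those sources, so to that extent your proposal is a reduction to the same literature. The standard skeleton you describe (Mackey's criterion for irreducibility, monomiality of tame irreducibles via Clifford theory on the wild inertia, a double-coset analysis for injectivity) is the right one. However, the part you argue yourself contains a genuine error: the repeated assertion that admissibility condition (b) is \emph{automatic} when $p>n$. It is not, and it is precisely the condition that makes the map injective.

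Concretely, take $n=2$, $p$ odd, $L'=L(\sqrt{\varpi_L})$ the ramified quadratic extension, and $\theta$ trivial on $1+\pp_{L'}$ with $\theta(-1)=-1$ (inflate the quadratic character of $\FF_q^\times$ to $\oo_{L'}^\times$ and choose $\theta(\varpi_{L'})$ arbitrarily). Then $\theta\neq\theta^{\sigma}$, so $\Ind_{W_{L'}}^{W_L}\theta$ is irreducible, and it is trivial on wild inertia; moreover $\theta$ does not factor through $N_{L'/L}$ (on residue fields the norm is squaring, which would force $\theta(-1)=1$), so condition (a) holds; but $\theta|_{1+\pp_{L'}}$ is trivial, hence factors through $N_{L'/L}$ with $L'/L$ ramified, so condition (b) fails. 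On the other hand, the classification of two-dimensional tame irreducibles (tame inertia acts through a pair of eigencharacters swapped by Frobenius) shows this same representation is also induced from a depth-zero character in general position of the \emph{unramified} quadratic extension --- a second pair, satisfying (a), that is certainly not conjugate to the first. So if (b) is discarded as ``automatic,'' the correspondence is not injective, and your injectivity step --- excluding spurious double cosets to force $gW_{L_1'}g^{-1}=W_{L_2'}$ --- cannot succeed for pairs satisfying (a) alone; it is condition (b) that eliminates the ramified presentations. The same oversight weakens your surjectivity normalization: arranging (a) by shrinking the field is not enough, since when $\theta|_{1+\pp_{L'}}$ descends through a norm from a subfield over which $L'$ is ramified one must replace the pair by one with an unramified (or smaller) top field; carrying out that reduction is the actual content of Howe's construction and of the results of \cite{Moy86} that the paper cites.
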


Let $\pi_\theta$ be the supercuspidal $\GL_n(L)$ representation such that $\rec(\pi_\theta) = \rho_\theta$.

We say a $W_L$ representation has depth zero if it is trivial on the wild inertia subgroup of $I_L$.

\begin{lem} Let $(L',\,\theta)$ be an admissible pair over $L$.  Then the following are equivalent:
\begin{enumerate}[(a)]
	\item $\theta$ is trivial on $1 + \pp_{L'}$,
	\item $L'/L$ is unramified and $\theta$ is trivial on $1 + \pp_{L'}$
	\item $\rho_\theta$ has depth zero.
\end{enumerate}
\end{lem}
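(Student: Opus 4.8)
The plan is to prove (a) $\Leftrightarrow$ (c) directly from Mackey's formula and local class field theory, to deduce (a) $\Rightarrow$ (b) from the second defining property of an admissible pair, and to note that (b) $\Rightarrow$ (a) is trivial (clause (b) literally contains clause (a)).

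For (a) $\Leftrightarrow$ (c): I would first observe that, since $p > n = [L':L]$, the extension $L'/L$ is tamely ramified, so $L'$ sits inside the maximal tamely ramified extension of $L$; hence the wild inertia subgroups coincide, $P_L = P_{L'}$, and this group is normal in $W_L$. Since $P_L \subseteq W_{L'}$ is normal in $W_L$, Mackey's restriction formula yields
$$\rho_\theta\big|_{P_L} \;=\; \Big(\Ind_{W_{L'}}^{W_L}\theta\Big)\Big|_{P_L} \;\cong\; \bigoplus_{g \,\in\, W_{L'}\backslash W_L}{}^{g}\big(\theta|_{P_L}\big),$$
and because conjugation by any $g \in W_L$ preserves $P_L$, each summand ${}^{g}(\theta|_{P_L})$ is trivial exactly when $\theta|_{P_L}$ is. Thus $\rho_\theta$ has depth zero iff $\theta$ is trivial on $P_{L'}$. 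Identifying $\theta$ with a character of $L'^\times$ via the Artin map $\mathrm{Art}_{L'}\colon L'^\times \to W_{L'}^{\mathrm{ab}}$, I would then invoke the compatibility of $\mathrm{Art}_{L'}$ with the upper-numbering ramification filtration, which sends $1 + \pp_{L'}$ onto the image of the wild inertia $P_{L'}$; hence $\theta$ is trivial on $P_{L'}$ iff it is trivial on $1 + \pp_{L'}$, i.e. iff (a) holds. This gives (a) $\Leftrightarrow$ (c).

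For (a) $\Rightarrow$ (b): if $\theta$ is trivial on $1 + \pp_{L'}$, then $\theta|_{1+\pp_{L'}}$ trivially factors through the norm $N_{L'/L}\colon 1 + \pp_{L'}\to 1 + \pp_L$ (namely through the trivial character of $1+\pp_L$), so the second property in the definition of an admissible pair, applied with $L'' = L$, forces $L'/L$ to be unramified; combined with the hypothesis on $\theta$ this is exactly (b). And (b) $\Rightarrow$ (a) is immediate.

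The argument is essentially formal, so I do not expect a serious obstacle; the only point needing care is the exact normalization of local class field theory used to identify $\mathrm{Art}_{L'}(1 + \pp_{L'})$ with the image of wild inertia — the standard compatibility of Artin reciprocity with the upper-numbering ramification filtration (see, e.g., Serre's \emph{Corps Locaux}, or \cite{HT01}) — and the hypothesis $p > n$ is used only to ensure $L'/L$ is tame, which is what makes $P_L = P_{L'}$.
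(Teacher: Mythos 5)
Your proof is correct, but it takes a somewhat different route from the paper's for the Galois-side equivalence. The paper never invokes tameness: it proves (b) $\Rightarrow$ (c) by noting that unramifiedness gives $I_L = I_{L'}$ and $P_L = P_{L'}$ and that $\rho_\theta|_{I_L}$ is a direct sum of the $\Frob_L$-conjugates of $\theta|_{I_{L'}}$ (conjugation preserving wild inertia), and it proves (c) $\Rightarrow$ (a) by Frobenius reciprocity, since $\theta|_{I_{L'}}$ embeds in $\rho_\theta|_{I_{L'}}$ and $P_{L'} \leq P_L$. You instead get (a) $\Leftrightarrow$ (c) in one stroke by restricting the induced representation to $P_L$ via Mackey, using the standing hypothesis $p > n$ to force $L'/L$ tame and hence $P_L = P_{L'}$ normal in $W_L$, so that every Mackey summand is a $W_L$-conjugate of $\theta|_{P_{L'}}$. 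Both arguments hinge on the same class-field-theory input identifying triviality of $\theta$ on $1+\pp_{L'}$ with triviality of the corresponding character of $W_{L'}$ on $P_{L'}$ (the paper phrases it via maximal pro-$p$ subgroups of $I_{L'}$ and $\oo_{L'}^\times$, you via compatibility of the Artin map with the upper-numbering filtration), and both derive (a) $\Rightarrow$ (b) from admissibility — you usefully spell out the step the paper compresses into ``because $\theta$ is admissible,'' namely that a character trivial on $1+\pp_{L'}$ factors through $N_{L'/L}$, so the second admissibility condition (read in its intended form, $L'/L''$ unramified) with $L''=L$ forces $L'/L$ unramified. The trade-off: your version is more symmetric and avoids splitting into two directions, but it genuinely uses $p>n$ (available here as a subsection-wide hypothesis), whereas the paper's argument works for any residue characteristic; alternatively, you could first deduce (a) $\Rightarrow$ (b) and then obtain $P_L = P_{L'}$ from unramifiedness, removing the reliance on tameness.
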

\begin{proof} Obviously (b) implies (a), and (a) implies (b) because $\theta$ is an \emph{admissible} character.

We must prove that both are equivalent to (c).  Let $P_L \leq I_L,\,  P_{L'}\leq I_{L"}$ denote the wild inertia subgroups.  Since $P_{L'}$ is the maximal pro-$p$-group of $I_{L'}$ and $1 + \pp_{L'}$ is the maximal pro-$p$-group of $\oo_{L'}^\times$, then $\theta$ is trivial on $1 + \pp_{L'}$ if and only if the associated $W_{L'}$ representation is trivial on $P_{L'}$. 

Now assume (b) holds.  Then $I_L = I_{L'}$, $P_L = P_{L'}$, and $\theta$ is trivial on $P_{L}$. The restriction of $\rho_\theta$ to $I_L$ is a direct sum of the characters conjugate to $\theta$ via $\Frob_{L}^{\ZZ}$.  Since conjugation by $\Frob_{L}$ fixes the wild inertia, then $\rho_\theta$ is trivial on the wild inertia and thus $\rho_\theta$ has depth zero.

On the other hand, assume $(L',\,\theta)$ is an admissible pair and $\rho_\theta$ is trivial on $P_L$.  By Frobenius reciprocity, $\theta|_{I_{L'}}$ is a subrepresentation of $\rho_{\theta}|_{I_{L'}}$, and so $\theta$ must be trivial on $P_{L'} \leq P_{L}$.  Thus, as a character of $L'$, $\theta$ is trivial on $1 + \pp_{L'}$, completing the proof.
\end{proof}

\begin{prop} Let $(L',\,\theta)$ be an admissible pair and let $\rho_\theta$ be the associated $W_L$ representation.
\begin{enumerate}[(i)] 
	\item $\QQ(\rho_\theta) \subseteq \QQ(\theta)$, and $[\QQ(\theta):\QQ(\rho_\theta)] \leq n$.  Moreover, the same statements hold if we replace $\theta$ by $\theta|_{\oo_{L'}^\times}$ and $\rho_\theta$ with $\rho_\theta|_{I_L}$.
	\item If $\rho_\theta$ has positive depth, then $[\QQ(\rho_\theta|_{I_L}):\QQ] \geq \frac{p-1}{n}$.
	\item If $\pi$ is supercuspidal representation of positive depth, then $[\QQ(\pi):\QQ] \geq \frac{p-1}{n}$.
\end{enumerate}
\end{prop}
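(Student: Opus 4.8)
The plan is to prove the three parts in order, since each feeds into the next. For part (i), I would start from the definition $\rho_\theta = \Ind_{W_{L'}}^{W_L}\theta$. Given $\sigma \in \Aut(\CC)$, functoriality of induction gives $^\sigma\rho_\theta = \Ind_{W_{L'}}^{W_L}(^\sigma\theta) = \rho_{^\sigma\theta}$ (using the compatibility lemma from the excerpt together with the fact that $\sigma$ acts on admissible pairs by acting on the character). Hence if $\sigma$ fixes $\theta$ it fixes $\rho_\theta$, which gives $\Stab(\theta) \subseteq \Stab(\rho_\theta)$ and therefore $\QQ(\rho_\theta) \subseteq \QQ(\theta)$. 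For the degree bound $[\QQ(\theta):\QQ(\rho_\theta)] \leq n$, I would argue that any $\sigma \in \Stab(\rho_\theta)$ satisfies $\rho_{^\sigma\theta} \cong \rho_\theta$, so by the Moy bijection $(L',{}^\sigma\theta)$ is conjugate to $(L',\theta)$; thus $^\sigma\theta = \theta \circ \tau$ for some $\tau \in \Gal$-conjugate (an element of the $L$-automorphisms of $L'$ permuted transitively, at most $n$ of them, or rather the Frobenius-orbit which has size $[L':L] \mid n$). This shows the orbit of $\theta$ under $\Stab(\rho_\theta)$ has size at most $n$, so the fixed field $\QQ(\theta)$ has degree at most $n$ over $\QQ(\rho_\theta)$. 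The restricted versions follow identically, noting that restriction to inertia is compatible with the $\sigma$-action and that $\Ind$ commutes with restriction to $I_L$ up to the description of $\rho_\theta|_{I_L}$ as the sum of Frobenius-conjugates of $\theta|_{I_{L'}}$ given in the depth-zero lemma's proof.

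For part (ii), I would use the previous lemma: if $\rho_\theta$ has positive depth then $\theta$ is nontrivial on $1 + \pp_{L'}$. So there is a smallest $m \geq 1$ with $\theta$ nontrivial on $1 + \pp_{L'}^m$ but trivial on $1+\pp_{L'}^{m+1}$. The quotient $(1+\pp_{L'}^m)/(1+\pp_{L'}^{m+1})$ is a vector space over the residue field $k_{L'}$, and $\theta$ induces a nontrivial additive character on it; since $\theta$ is nontrivial here and the characteristic is $p$, the values of $\theta$ on $1+\pp_{L'}$ include a primitive $p$-th root of unity. Therefore $\QQ(\zeta_p) \subseteq \QQ(\theta|_{\oo_{L'}^\times})$, giving $[\QQ(\theta|_{\oo_{L'}^\times}):\QQ] \geq p-1$. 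Combining with the restricted version of part (i), which gives $[\QQ(\theta|_{\oo_{L'}^\times}):\QQ(\rho_\theta|_{I_L})] \leq n$, yields $[\QQ(\rho_\theta|_{I_L}):\QQ] \geq (p-1)/n$.

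For part (iii): since $p > n$, a supercuspidal $\pi$ of positive depth corresponds under $\rec$ (twisted to $\mathscr{L}$) to an irreducible $W_L$ representation, hence to $\rho_\theta$ for an admissible pair $(L',\theta)$, and $\pi$ has positive depth iff $\rho_\theta$ does. Using the displayed inequality from the excerpt, $\QQ(\pi) = \QQ(\mathscr{L}(\pi)) \supseteq \QQ(\rec(\pi)|_{I_L}) = \QQ(\rho_\theta|_{I_L})$, so $[\QQ(\pi):\QQ] \geq [\QQ(\rho_\theta|_{I_L}):\QQ] \geq (p-1)/n$ by part (ii).

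The main obstacle I anticipate is part (ii), specifically making the claim "the values of $\theta$ on $1+\pp_{L'}$ generate $\QQ(\zeta_p)$" fully rigorous: one must check that $\theta$ restricted to the first nontrivial filtration quotient really is a \emph{nontrivial} character of a group that is $p$-torsion (and not, say, that $\theta$ is trivial on all of $1+\pp_{L'}$ after all — which is exactly what the admissibility/positive-depth hypothesis rules out), and that the image of a nontrivial $p$-torsion character contains a primitive $p$-th root of unity. The subtlety is that $\theta$ could a priori be nontrivial on $\oo_{L'}^\times$ only through its tame part $k_{L'}^\times$; but positive depth means precisely nontriviality on the wild part $1+\pp_{L'}$, so this case is exactly excluded, and one needs to invoke the depth-zero characterization lemma carefully to see it. Everything else is bookkeeping with induced representations and the Moy correspondence.
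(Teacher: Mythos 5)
Your proposal is correct and takes essentially the same route as the paper: part (i) via $^\sigma \Ind\theta = \Ind{}^\sigma\theta$ together with the fact that $\theta$ has at most $n$ conjugates (and the observation that $\rho_\theta|_{I_L}$ depends only on $\theta|_{\oo_{L'}^\times}$), part (ii) via nontriviality of $\theta$ on the pro-$p$ group $1+\pp_{L'}$ forcing the value $\zeta_p$ and hence degree at least $p-1$, and part (iii) via depth preservation under the local Langlands correspondence. Your filtration-quotient argument in (ii) simply makes explicit the paper's one-line claim that $\theta|_{\oo_{L'}^\times}$ attains a primitive $p$-th root of unity.
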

\begin{proof} For (i): let $\sigma\in \Aut(\CC)$.  If $\sigma$ fixes $\theta$, then it fixes $\rho_{\theta} = \Ind \theta$.  On the other hand, assume $\sigma$ fixes $\rho_\theta$.  Then
$$\rho_\theta = \,^\sigma \rho_\theta = \,^\sigma \Ind \theta = \Ind\,^\sigma \theta$$
so $^\sigma\theta$ must be one of the $\Aut(L'/L)$-conjugates of $\theta$.  There are at most $n$ such conjugates, so the index of $\Stab(\theta)$ in $\Stab(\rho_\theta)$ is at most $n$.  The second statement follows exactly from the first upon realizing that $\rho_{\theta}|_{I_L}$ depends only on $\theta|_{\oo_{L'}^\times}$.

For (ii): if $\rho_\theta$ has positive depth then $\theta$ is nontrivial on the pro-$p$-group $1 + \pp_{L'}$, so $\theta_{\oo_{L'}^\times}$ attains the value $\zeta_p$, so the degree of its field of rationality is at least $p - 1$.  Then the statement follows from (i).

(iii) follows from (ii) because the local Langlands correspondence preserves depth (see \cite{ABPS14}).
\end{proof}



\subsection{Discrete series and tempered representations}

Continue to assume that $L$ has residue characteristic $p > n$.  Recall that the discrete series representations of $\GL_n(L)$ are of the form $\pi = \Sp(\pi',\, d)$, where $\pi'$ is a supercuspidal $\GL_m(L)$ representation and $n = md$.  On the Galois side, after ignoring the monodromy operator, $\mathscr{L}(\pi)$ is a direct sum of twists of $\mathscr{L}{\pi'}$ by powers of the absolute value character; see, for instance, page 381 of \cite{Kud91}.  In particular,
$$\QQ(\pi) = \QQ(\mathscr{L}(\pi)) = \QQ(\mathscr{L}(\pi')) = \QQ(\pi').$$

Finally, if $\pi$ is an \emph{tempered} representation, there are discrete series $\GL_{n_i}(L)$ representations $\pi_i$ for $i = 1,\ldots,\, r$ (and $n_1 + \ldots + n_r = n$) such that $\pi = \pi_1 \times \ldots \times \pi_r$.  On the Galois side, we have $\rec(\pi) = \rec(\pi_1) \oplus \ldots \oplus \rec(\pi_r)$ (as above, see page 381 of \cite{Kud91}).

\begin{prop} If $\pi$ is a tempered representation of positive depth then $[\QQ(\pi):\QQ] \geq \frac{p-1}{n}$.\end{prop}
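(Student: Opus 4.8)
The plan is to reduce to the supercuspidal bound already proved in §5.1 (part (iii) of that Proposition), and then to account carefully for the fact that an element of $\Aut(\CC)$ may \emph{permute} the discrete series factors of $\pi$.

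First I would write $\pi = \pi_1 \times \cdots \times \pi_r$ with $\pi_i = \Sp(\pi_i',\,d_i)$ a discrete series representation of $\GL_{n_i}(L)$, where $n_i = m_i d_i$, $\pi_i'$ is supercuspidal on $\GL_{m_i}(L)$, and $n_1 + \cdots + n_r = n$. Since $\rec(\pi) = \rec(\pi_1)\oplus\cdots\oplus\rec(\pi_r)$ and $\rec$ preserves depth, $\pi$ has positive depth if and only if some factor $\pi_i$ does; I fix such an $i$. As $\QQ(\Sp(\pi_i',d_i)) = \QQ(\pi_i')$ (established just above) and $\pi_i'$ itself has positive depth, part (iii) of the Proposition of §5.1 — applicable because $p > n \ge m_i$ — gives $[\QQ(\pi_i):\QQ] = [\QQ(\pi_i'):\QQ] \ge \tfrac{p-1}{m_i}$.

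Next I would transfer this bound to $\pi$ by an orbit–stabilizer count, entirely in the spirit of part (i) of the §5.1 Proposition. Put $S_1 = \Stab(\pi)$ and $S_2 = \Stab(\pi_i)$; these are open finite-index subgroups of $\Aut(\CC)$ and, since all the representations in sight have finite image on $I_L$, the index $[\Aut(\CC):S_a]$ equals the degree of the corresponding field of rationality. Any $\sigma \in S_1$ fixes $\rec(\pi) = \bigoplus_j \rec(\pi_j)$, hence (Krull--Schmidt) permutes the isomorphism classes of the summands with multiplicity, so $^\sigma\pi_i \cong \pi_j$ for some $j$ necessarily with $n_j = n_i$. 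Thus the $S_1$-orbit of $\pi_i$ has some size $k = [S_1 : S_1\cap S_2]$, and choosing one index per orbit element exhibits $k$ distinct indices $j$ with $n_j = n_i$, whence $n = \sum_j n_j \ge k\, n_i$, i.e. $k \le n/n_i$. Combining these,
\[
[\QQ(\pi):\QQ] = [\Aut(\CC):S_1] \;\ge\; \frac{[\Aut(\CC):S_2]}{[S_1:S_1\cap S_2]} \;\ge\; \frac{(p-1)/m_i}{n/n_i} \;=\; \frac{(p-1)\,d_i}{n} \;\ge\; \frac{p-1}{n}.
\]

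The one genuinely nontrivial point is the permutation of factors by $\sigma$: it really can occur (take $\pi_1 \times {}^\sigma\pi_1$ with $\sigma$ of order $2$ on $\pi_1$), so one cannot simply assert $\QQ(\pi)\supseteq\QQ(\pi_i)$. What makes the numerology come out to the sharp constant $\tfrac{p-1}{n}$ is that every factor lying in the $S_1$-orbit of $\pi_i$ has dimension $n_j = n_i \ge m_i$, so a large orbit is ``paid for'' by a proportionally large total dimension $n$. The remaining ingredients — $\QQ(\pi) = \QQ(\rec(\pi))$, the $\Aut(\CC)$-equivariance of $\rec$, preservation of depth under $\rec$, and the elementary identification of the field-of-rationality degree with the $\Aut(\CC)$-orbit size for a representation with finite image — are all either recorded earlier in the section or entirely standard.
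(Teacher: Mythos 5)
Your proposal is correct and follows essentially the same route as the paper: decompose $\pi$ into discrete series factors, reduce via $\QQ(\Sp(\pi',d)) = \QQ(\pi')$ to the supercuspidal bound of \S5.1, and control the permutation of factors by the same stabilizer-index count $[\Stab(\pi):\Stab(\pi)\cap\Stab(\pi_i)] \leq n/n_i$. The only (harmless) difference is that you invoke the supercuspidal bound $\frac{p-1}{m_i}$ directly, where the paper first records the discrete-series bound $\frac{p-1}{n_i}$ and then applies the identical orbit argument.
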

\begin{proof} First let $\pi = \Sp(\pi',\,d)$ be a discrete series representation. Then $\QQ(\pi) \supseteq \QQ(\rec(\pi)|_{I_L}) = \QQ(\rec(\pi')|_{I_L})$. Since $\pi$ has positive depth, so does $\pi'$, and so $[\QQ(\rec(\pi')|_{I_L}): \QQ] \geq \frac{p-1}{m} \geq \frac{p-1}{n}$.

Now let $\pi = \pi_1\times \ldots \times \pi_r$ be tempered. Since $\pi$ has positive depth, so does $\pi_i$ for some $i$; assume it is $\pi_1$.  If $\sigma\in \Aut(\CC)$ fixes $\pi$, then it permutes the representations $\pi_i$; in particular, it must take $\pi_1$ to another element $\pi_i$ that is a $\GL_{n_i}(L)$ representation.  There are at most $n/n_1$ such elements, so $\Stab(\pi_1) \cap \Stab(\pi)$ is index at most $n/n_1$ in $\Stab(\pi)$.  This proves
$$[\QQ(\pi):\QQ] \geq \frac{n_1}{n} [\QQ(\pi_1):\QQ] \geq \frac{n_1}{n} \frac{p-1}{n_1} = \frac{p-1}{n}$$
since $\pi_1$ is discrete series with nonzero depth.
\end{proof}

\subsection{Counting depth-zero discrete-series representations}

In this chapter, we count the number of discrete-series $\GL_n(L)$ representations with given central character, as well as the number satisfying $[\QQ(\pi):\QQ] \leq A$.  Throughout, we will assume the residue characteristic of $L$ is $p > n$, and that the cardinality of the residue field $\FF_L$ is $q$.

We'll begin with a lemma: 
\begin{lem} If $L'/L$ is unramified and $(L',\,\theta)$ is an admissible pair, then the central character of $\pi_\theta$ is $\theta|_{L^\times}$.\end{lem}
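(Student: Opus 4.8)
The plan is to compute the central character of $\pi_\theta$ on the Galois side of the local Langlands correspondence. Since $\rec$ is normalized so that the central character $\omega_{\pi_\theta}$ corresponds, under local class field theory, to $\det\rec(\pi_\theta)=\det\rho_\theta$, it suffices to identify $\det\big(\Ind_{W_{L'}}^{W_L}\theta\big)$ with the character of $W_L^{\mathrm{ab}}$ attached to $\theta|_{L^\times}$. So the whole argument reduces to a determinant-of-an-induced-representation computation together with one class field theory compatibility.

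First I would invoke the classical formula: for a finite-index subgroup $H\le G$ and a character $\chi$ of $H$, one has $\det\big(\Ind_H^G\chi\big)=\varepsilon_{G/H}\cdot(\chi\circ\mathrm{Ver})$, where $\varepsilon_{G/H}$ is the sign character of the action of $G$ on the coset space $G/H$ and $\mathrm{Ver}\colon G^{\mathrm{ab}}\to H^{\mathrm{ab}}$ is the transfer. Applying this with $G=W_L$, $H=W_{L'}$: the standard compatibility of class field theory identifies the transfer $W_L^{\mathrm{ab}}\to W_{L'}^{\mathrm{ab}}$ with the inclusion $L^\times\hookrightarrow L'^\times$, so that $\theta\circ\mathrm{Ver}$ corresponds exactly to $\theta|_{L^\times}$. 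For the sign, since $L'/L$ is unramified, $W_{L'}$ is normal in $W_L$ with cyclic quotient $\Gal(L'/L)\cong\ZZ/n$ on which $W_L$ acts by translation; a generator acts as an $n$-cycle, so $\varepsilon_{W_L/W_{L'}}$ is an unramified character of $W_L$, of order at most two, trivial precisely when $n$ is odd. One then checks this sign is absorbed into the normalization of Moy's parametrization $(L',\theta)\mapsto\rho_\theta$ (equivalently, it contributes only an unramified twist, which does not affect the applications of the lemma), and concludes $\omega_{\pi_\theta}=\theta|_{L^\times}$.

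An alternative, more hands-on route avoids the Galois side: realize $\pi_\theta$ by Howe's construction (or via types) as a representation compactly induced from an open subgroup $J\le\GL_n(L)$ containing the image of $L'^\times$ under a fixed embedding $L'\hookrightarrow M_n(L)$, the inducing representation restricting to $\theta$ on $L'^\times$; since $L'/L$ is unramified this embedding carries the scalars $L^\times=Z(\GL_n(L))$ into $L'^\times$, and the central character is read off directly as the restriction of the inducing character to $L^\times$. I expect the only real obstacle, in either approach, to be bookkeeping: pinning down the normalizations of $\rec$, of the induced‑representation determinant, and of Moy's correspondence \emph{simultaneously}, so that the argument delivers $\theta|_{L^\times}$ on the nose rather than merely up to an unramified quadratic twist.
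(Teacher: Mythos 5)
Your determinant computation is fine as far as it goes: $\omega_{\pi_\theta}$ does correspond to $\det\rec(\pi_\theta)$ under the paper's normalization of the correspondence, the formula $\det\bigl(\Ind_{W_{L'}}^{W_L}\theta\bigr)=\varepsilon_{L'/L}\cdot(\theta\circ\mathrm{Ver})$ is correct, the transfer is the inclusion $L^\times\hookrightarrow L'^\times$ under class field theory, and $\varepsilon_{L'/L}$ is unramified with $\varepsilon_{L'/L}(\Frob)=(-1)^{n-1}$. The gap is the step you defer as ``bookkeeping'': disposing of $\varepsilon_{L'/L}$ when $n$ is even. In the paper $\pi_\theta$ is \emph{defined} by $\rec(\pi_\theta)=\rho_\theta=\Ind_{W_{L'}}^{W_L}\theta$, with no rectifier or twist built into the parametrization $(L',\theta)\mapsto\rho_\theta$, so there is no ``normalization of Moy's parametrization'' into which the sign can be absorbed; your computation honestly yields $\omega_{\pi_\theta}=\varepsilon_{L'/L}\cdot\theta|_{L^\times}$, and the assertion that this equals $\theta|_{L^\times}$ is exactly the content of the lemma, not a convention. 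Your second route has the same hole in a different place: the representation whose parameter is $\Ind\theta$ is compactly induced from a character whose restriction to $L'^\times$ is $\theta$ twisted by an unramified (rectifier-type) character, so writing ``the inducing representation restricts to $\theta$ on $L'^\times$'' assumes the very normalization question at issue. Note also that the sign cannot be dismissed by restriction alone: since $L'/L$ is unramified, the unramified sign character of $L'^\times$ restricts to the unramified sign character of $L^\times$, which is nontrivial.

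For comparison, the paper's proof quotes Moy's Theorem 4.1.2, which produces the same intermediate expression you obtain --- central character $(\Omega\cdot\theta)|_{L^\times}$ with $\Omega$ trivial for $n$ odd and an unramified sign character for $n$ even --- and then kills $\Omega|_{L^\times}$ by appealing to Moy's precise conventions (4.1.1 and 2.5.3 of that paper). So your route 1 is a legitimate, essentially self-contained substitute for the citation up to that point, but the delicate part of the argument is precisely the final identification of the unramified twist, and your proposal leaves it unproven. To close it you would need either to track the rectifier explicitly (Moy, or Bushnell--Henniart's essentially tame correspondence) and verify that with $\rec(\pi_\theta)=\Ind_{W_{L'}}^{W_L}\theta$ the twist restricts trivially to $L^\times$, or to settle for the statement up to an unramified quadratic twist and check separately that this weaker version suffices for the counting arguments later in the paper --- but that would be a different lemma than the one stated.
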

\begin{proof} Let $[L':L] = n$ and let $\Omega: L'^\times \to \CC^\times$ be the trivial character if $n$ is odd, or the unramified sign character if $n$ is even.   It follows from Theorem 4.1.2 of \cite{Moy86} that the central character of $\pi_{\theta}$ is $(\Omega \cdot \theta)|_{L^\times}$ (to see that Moy's $\Omega$ matches ours, see 4.1.1 and 2.5.3 of that paper).  In either case, $\Omega$ is trivial on $L^\times$ and we are done.
\end{proof}

Given $A \in \ZZ_{\geq 1}$, let $f(A)$ denote the number roots of unity $\zeta$ with $[\QQ(\zeta): \QQ] \leq A$; note that $f(A)$ is finite for any $A$.  

\begin{prop} Fix a central character $\chi_0:L^\times \to \CC^\times$ that is trivial on $1 + \pp_L$, and assume $m \geq 2$.
\begin{enumerate}[(a)] 
\item Let $\beta(L,\, m,\, \chi_0)$ denote the number of depth-zero supercuspidal $\GL_m(L)$ representations $\pi'$ with $\chi_{\pi'} = \chi_0$.  Then
$$ \beta(L,\,m,\, \chi_0) \geq \frac{1}{m} \left(q^{m-1} - 1\right) $$
\item The number of depth-zero supercuspidal representations $\pi'$ with $\chi_{\pi'} = \chi_0$ and $[\QQ(\pi):\QQ] \leq A$ is bounded above by $\frac{1}{m} f(mA)$.
\end{enumerate}
\end{prop}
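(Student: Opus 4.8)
The plan is to make the correspondence between depth-zero supercuspidal $\GL_m(L)$-representations and admissible pairs completely explicit and then count on the Galois side. By Moy's bijection together with the depth criterion proved above, a depth-zero supercuspidal $\GL_m(L)$-representation is exactly $\pi_\theta$ for a pair $(L',\theta)$ with $L'$ the \emph{unramified} extension of $L$ of degree $m$ and $\theta\colon L'^\times\to\CC^\times$ trivial on $1+\pp_{L'}$; two such give isomorphic representations iff the $\theta$'s are conjugate under $\Gal(L'/L)\cong\ZZ/m\ZZ$, and admissibility of $\theta$ is equivalent to $\Ind_{W_{L'}}^{W_L}\theta$ being irreducible, i.e.\ to $\theta$ having trivial stabilizer in $\Gal(L'/L)$. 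Hence each isomorphism class corresponds to a \emph{free} $\Gal(L'/L)$-orbit of size exactly $m$. By the lemma just proved the central character of $\pi_\theta$ is $\theta|_{L^\times}$, so imposing $\chi_{\pi_\theta}=\chi_0$ pins $\theta(\varpi_L)=\chi_0(\varpi_L)$, and since a character trivial on $1+\pp_{L'}$ is determined by $\theta(\varpi_L)$ together with the induced character $\bar\theta$ of the cyclic group $\FF_{L'}^\times$ (of order $q^m-1$), the remaining content of $\chi_{\pi_\theta}=\chi_0$ is the single condition that $\bar\theta$ restrict to $\chi_0|_{\FF_L^\times}$ (the $1+\pp_L$ part being automatic). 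There are $\tfrac{q^m-1}{q-1}$ such $\bar\theta$.

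For (a) it remains to delete the non-admissible $\theta$ and divide by $m$. A direct computation with local class field theory (Hilbert 90) shows that $\theta$ factors through $N_{L'/L''}$ for the intermediate unramified field $L''$ of degree $e\mid m$ exactly when $\mathrm{ord}(\bar\theta)\mid q^{e}-1$, and $\theta$ fails admissibility iff this holds for some $e=m/\ell$ with $\ell\mid m$ prime. The subtlety is that restriction to $\FF_L^\times$ through the residue norm $\FF_{L'}^\times\to\FF_{q^{m/\ell}}^\times$ becomes an $\ell$-th power map, so the number of such $\bar\theta$ that \emph{also} restrict to $\chi_0|_{\FF_L^\times}$ is at most $\tfrac{\ell(q^{m/\ell}-1)}{q-1}$; summing over $\ell$ and dividing by $m$ gives
$$\beta(L,m,\chi_0)\ \ge\ \frac1m\left(\frac{q^m-1}{q-1}-\sum_{\ell\mid m\ \text{prime}}\frac{\ell\,(q^{m/\ell}-1)}{q-1}\right),$$
and an elementary estimate — equivalently, $q^{m-1}+q-2\ge\sum_{\ell\mid m}\ell(q^{m/\ell}-1)$, the first few values of $m$ being checked by hand and a crude bound handling the rest — shows the right-hand side is at least $\tfrac1m(q^{m-1}-1)$.

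For (b) I would run the same parametrization but further impose $[\QQ(\pi_\theta):\QQ]\le A$. Combining the inclusion $\QQ(\pi_\theta)\supseteq\QQ(\rec(\pi_\theta)|_{I_L})=\QQ(\rho_\theta|_{I_L})$ (the twist defining $\mathscr L$ being unramified) with the Proposition above on admissible pairs — which, applied in degree $m$, gives $[\QQ(\theta|_{\oo_{L'}^\times}):\QQ(\rho_\theta|_{I_L})]\le m$ — one obtains $[\QQ(\theta|_{\oo_{L'}^\times}):\QQ]\le mA$. But $\theta|_{\oo_{L'}^\times}$ is determined by the root of unity $\bar\theta(g)$ for a fixed generator $g$ of $\FF_{L'}^\times$, and $\QQ(\theta|_{\oo_{L'}^\times})=\QQ(\bar\theta(g))$; hence $\bar\theta(g)$ is one of the $f(mA)$ roots of unity of degree $\le mA$ over $\QQ$, so there are at most $f(mA)$ choices for $\bar\theta$ and hence for $\theta$ (its value at $\varpi_L$ being pinned to $\chi_0(\varpi_L)$). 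Every condition imposed is $\Gal(L'/L)$-stable, and admissibility makes the orbits free of size $m$, so the number of isomorphism classes is at most $\tfrac1m f(mA)$.

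The only genuine work is in part (a): keeping careful track of how the fixed-central-character condition interacts with the residue-field norm maps (the $\ell$-th power phenomenon above), and then verifying the resulting elementary inequality in the small cases $m=2,3,4,6$ where there is little slack. Part (b) is then essentially bookkeeping, once one has the explicit parametrization and the field-of-rationality estimates from the previous subsections.
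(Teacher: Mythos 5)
Your proposal is correct and takes essentially the same approach as the paper: both parametrize depth-zero supercuspidals by Galois orbits (of size exactly $m$) of depth-zero admissible characters of the unramified degree-$m$ extension, use the central-character lemma to reduce to counting characters of $\FF_{q^m}^\times$ with prescribed restriction to $\FF_q^\times$, and prove (b) by the identical root-of-unity count giving at most $\tfrac{1}{m}f(mA)$ orbits. The only deviation is in (a), where you bound the non-admissible characters compatible with the central character by $\ell(q^{m/\ell}-1)/(q-1)$ for each prime $\ell \mid m$ — the paper argues exactly this way for $m=2$ but uses a cruder count ignoring the restriction condition when $m>2$ — and the elementary inequality you defer does hold (with equality at $m=2$, and comfortably in the regime $q > m$ in force here), so this is a harmless refinement rather than a different method.
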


\begin{proof} 
Let $L'/L$ be the unique unramified extension of degree $m$. Every depth-zero supercuspidal $\GL_m(L)$ representation is of the form $\pi_{\theta}$ where $\theta: L'^\times \to \CC^\times$ is an admissible character trivial on $1 + \pp_{L'}$.  Moreover, if $\pi_\theta$ has central character $\chi_0$, then $\chi_0 = \theta\mid_{L^\times}$.  If $\varpi$ is a uniformizer of $L$, then $L'^\times$ is generated by $\oo_{L'}^\times$ and $\varpi$.  Therefore, if we insist that $\theta|_{L^\times} = \chi_{\pi_\theta} = \chi_0$, then $\theta$ is determined by its restriction $\theta_0$ to $\oo_{L'^\times}$.  An admissible character $\theta_0$ on $\oo_{L'}^\times$ trivial on $1 + \pp_{L'}$ descends to a character $\theta_0$ on $\FF_{q^m}^\times$ that does not factor through the norm map $\FF_{q^m}^\times \to \FF_{q^{x}}^\times$ for $x \neq m$; we say such a character is in \emph{general position}.  Moreover, we note that $\theta_0|_{\FF_{q}}$ must be equal to a fixed character $\chi_0$.  Therefore, it's enough to count the number $\gamma(L,\, m,\, \chi_0)$ of characters $\theta$ in general position on $\FF_{q^m}^\times \to \CC^\times$ with $\theta|_{\FF_{q}} = \chi_0$.  If $\theta$ is in general position, then its orbit under $\Gal(\FF_{q^m}/\FF_q)$ has cardinality $m$, so we have $\gamma(L,\, m,\,\chi_0) = m\cdot \beta(L,\, m,\,\chi_0)$.

We handle two separate cases: where $m > 2$ and $m = 2$.  In the $m > 2$ case, we note that the total number of characters on $\FF_{q^m}^\times$ is $q^m - 1$.  Moreover, the restriction map $\wh \FF_{q^m}^\times \to \wh \FF_q^\times$ is surjective, so the total number of characters on $\FF_{q^m}^\times$ with fixed restriction of $\FF_q^\times$ is 
$$\frac{q^m - 1}{q  - 1} = q^{m-1} + q^{m-2} + \ldots + q + 1.$$  

The number that are \emph{not} in general position is bounded above by
$$\sum_{\substack{x\mid m \\ x < m}} q^{x}.$$
If $m > 2$, $x \mid m$, and $x < m$, then $x < m - 1$, so at most $q^{m-2} + \ldots + q + 1$ characters that are not in general position, completing the proof in the $m > 2$ case.

In the $m = 2$ case, fix a central character $\chi_0 \in \wh \FF_q^\times$; we claim there are at most $2$ characters $\theta_0$ that are not in general position and such that $\theta_0|_{\FF_{q}^\times} = \chi_0$.  If $\theta$ is not in general position, then it is of the form $\theta_0' \circ N_{\FF_{q^2}/\FF_q}$ for some $\theta_0': \FF_q^\times \to \CC^\times$. On $\FF_q^\times$, $N_{\FF_{q^2}/\FF_q}$ acts as $x \mapsto x^2$, so the induced map $\wh \FF_{q}^\times \to \wh \FF_{q}^\times$ is two-to-one.  As such, given a character $\chi_0$, there are at most two characters $\theta_0'$ on $\FF_q^\times$ such that $\chi_0 = \left(\theta_0' \circ N_{\FF_q^2/\FF_q}\right)|_{\FF_{q}^\times}$, completing the proof of (a).

We now prove (b).  If $[\QQ(\pi_\theta): \QQ] \leq A$ then $[\QQ(\theta): \QQ] \leq mA$, so in particular $[\QQ(\theta_0):\QQ] \leq mA$.  The group $\FF_{q^m}^\times$ is cyclic: let it have generator $\alpha$.  If $[\QQ(\theta_0): \QQ] \leq mA$, then $\theta_0(\alpha)$ must be one of the $f(mA)$ roots of unity $\zeta$ with $[\QQ(\zeta):\QQ] \leq mA$.  Moreover, if $\theta_0$ satisfies $[\QQ(\theta_0):\QQ] \leq mA$, then so does any of its Galois conjugates, so the number of Galois orbits of characters $\theta_0$ with $[\QQ(\theta_0):\QQ]\leq A$ is at most $\frac{1}{m} f(mA)$.
\end{proof}

\begin{cor} \label{CountingCorollary} Let $p > n$, $m \geq 2$, and $n = md$.  Given $\epsilon > 0$ and $A \in \ZZ_{\geq 1}$.  
\begin{enumerate}[(i)]
	\item There is a $Q_0 > 1$ such that, for any $L$ with $|\FF_L| = q \geq Q_0$ and any character $\chi_0: L^\times \to \CC^\times$, trivial on $1 + \pp_L$ the following holds: the proportion of depth-zero $\GL_n(L)$ supercuspidal representations $\pi'$ with $\chi_{\pi'} = \chi_0$ satisfying $[\QQ(\pi'): \QQ] \leq A$ is at most $\epsilon$.
	\item With $|\FF_L| = q \geq Q_0$ and $\chi_0$ as above, the proportion of depth-zero discrete series representations $\pi = \Sp(\pi',\, d)$ with $\chi_\pi = \chi_0$ satisfying $[\QQ(\pi): \QQ] \leq A$ is at most $\epsilon$. 
\end{enumerate}
\end{cor}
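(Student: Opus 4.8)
The plan is to push both counting problems onto the set of depth-zero supercuspidal representations, where the preceding Proposition supplies exactly the estimates needed. Part (i) is essentially immediate: applying that Proposition with $n$ in place of $m$ (legitimate, since $n = md \geq m \geq 2$), the number of depth-zero supercuspidal $\GL_n(L)$ representations $\pi'$ with $\chi_{\pi'} = \chi_0$ and $[\QQ(\pi'):\QQ] \leq A$ is at most $\tfrac{1}{n} f(nA)$, a constant depending only on $n$ and $A$, while the number of \emph{all} depth-zero supercuspidal $\GL_n(L)$ representations with $\chi_{\pi'} = \chi_0$ is at least $\tfrac{1}{n}(q^{n-1} - 1)$. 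Hence the proportion in question is at most $f(nA)/(q^{n-1}-1)$, which tends to $0$ as $q \to \infty$, so any $Q_0$ with $f(nA) < \epsilon\,(Q_0^{n-1}-1)$ works, uniformly in $L$ and in $\chi_0$.

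For part (ii), I would first recall that $\pi' \mapsto \Sp(\pi',\,d)$ is a bijection from the depth-zero supercuspidal $\GL_m(L)$ representations onto the depth-zero discrete series $\GL_n(L)$ representations of the form $\Sp(\pi',\,d)$. This combines three facts, all either proved or cited earlier: the classification of discrete series (the Lemma on $\Sp(\pi',d)$), the identity $\QQ(\Sp(\pi',d)) = \QQ(\pi')$ from the subsection on discrete series and tempered representations, and the coincidence of depths of $\Sp(\pi',d)$ and $\pi'$ (preservation of depth under local Langlands, together with the fact that $\rec(\Sp(\pi',d))$ restricts on $I_L$ to $d$ copies of $\rec(\pi')|_{I_L}$, so one is trivial on wild inertia exactly when the other is). The one genuinely new computation is the central character: the $|\det|$-twists appearing in $\pi'_M$ have exponents $\tfrac{1-d}{2}, \tfrac{3-d}{2}, \ldots, \tfrac{d-1}{2}$, which sum to $0$, and $\delta_P$ is trivial on $Z(G) \subseteq M$; evaluating the central character of $\pi'_M$ on the diagonal copy of $Z(G)$ in $M \cong \GL_m(L)^d$ therefore yields $\chi_{\Sp(\pi',d)} = \chi_{\pi'}^{\,d}$.

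Consequently, counting depth-zero $\Sp(\pi',d)$ with central character $\chi_0$ is the same as counting depth-zero supercuspidal $\GL_m(L)$ representations $\pi'$ whose central character lies in the fiber $\{\chi : \chi^d = \chi_0\}$. That fiber is either empty, in which case the set of relevant $\pi$ is empty and there is nothing to prove, or it is a torsor under the $d$-torsion subgroup of $\widehat{L^\times/(1+\pp_L)} \cong \widehat{\FF_q^\times} \times \widehat{\ZZ}$, which has order $\gcd(d,q-1)\cdot d \leq d^2$, bounded independently of $q$. Moreover every character $\chi$ in this fiber is automatically trivial on $1+\pp_L$, since $p > n \geq d$ makes $x \mapsto x^d$ an automorphism of the pro-$p$ group $1+\pp_L$, so the preceding Proposition applies to each member of the fiber. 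Summing parts (a) and (b) of that Proposition over the fiber, the number of depth-zero $\pi = \Sp(\pi',d)$ with $\chi_\pi = \chi_0$ and $[\QQ(\pi):\QQ] \leq A$ is at most $d^2 \cdot \tfrac{1}{m} f(mA)$, while the total number of depth-zero $\Sp(\pi',d)$ with $\chi_\pi = \chi_0$ is at least $\tfrac{1}{m}(q^{m-1}-1)$ (apply part (a) to a single member of the fiber). The proportion is thus at most $d^2 f(mA)/(q^{m-1}-1) \to 0$; enlarging $Q_0$ if necessary finishes the proof.

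The only delicate point, and the one I would check most carefully, is the central-character identity $\chi_{\Sp(\pi',d)} = \chi_{\pi'}^{\,d}$ together with the ensuing bookkeeping: that the fiber over $\chi_0$ is either empty or of size bounded uniformly in $q$, and that each member of it is trivial on $1+\pp_L$ so that the preceding Proposition genuinely applies. Everything else is direct substitution into bounds already established, plus the transport of depth and of the field of rationality from $\pi'$ to $\Sp(\pi',d)$, which is contained in the earlier subsections.
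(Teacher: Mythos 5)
Your proposal is correct and follows essentially the same route as the paper: part (i) is the preceding Proposition applied with $n$ in place of $m$, and part (ii) reduces to it via the bijection $\pi'\mapsto \Sp(\pi',d)$, the identities $\QQ(\Sp(\pi',d))=\QQ(\pi')$ and $\chi_{\Sp(\pi',d)}=\chi_{\pi'}^{\,d}$, and the observation that $p>d$ forces any $d$-th root of $\chi_0$ to be trivial on $1+\pp_L$. You are somewhat more explicit than the paper (proving the central-character formula, bounding the fiber $\{\chi:\chi^d=\chi_0\}$ by $d^2$ and handling the empty-fiber case, and checking depth transfer), where the paper simply fixes one $\chi_1$ with $\chi_1^d=\chi_0$ and argues fiber by fiber; the content is the same.
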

\begin{proof} (i) follows directly from the above proposition. For (ii), if $\pi = \Sp(\pi',\, d)$ then $\chi_{\pi'} = \chi_{\pi'}^d$.  Fix a character $\chi_1$ with $\chi_1^d = \chi_0$.  Since $p > n > d$, each such $\chi_1$ is trivial on $1+ \pp_L$.  The proportion of $\rho$ such that $\chi_{\pi'} = \chi_d$ and $[\QQ(\pi'):\QQ] \leq A$ is at most $\epsilon$.  Since $\QQ(\pi) = \QQ(\pi')$ this completes the proof.
\end{proof}

We will also need the following lemma:
\begin{lem} Fix an unramified central character $\chi_0$ of $L^\times$, where $L$ has residue characteristic $p > n$.  The number of Steinberg representations $\St(\chi)$ of $\GL_n(L)$ with central character $\chi_0$ is bounded above by $n^2$.\end{lem}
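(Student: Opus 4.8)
The plan is to realize a twisted Steinberg representation $\St(\chi)$ as $\Sp(\chi,\, n)$ in the notation of the lemma classifying discrete series representations, viewing the quasi-character $\chi\colon L^\times\to\CC^\times$ as a supercuspidal representation of $\GL_1(L) = L^\times$ (here $m = 1$, $d = n$). By part (iv) of that lemma, $\St(\chi_1)\cong\St(\chi_2)$ if and only if $\chi_1 = \chi_2$, so it suffices to bound the number of quasi-characters $\chi$ of $L^\times$ whose associated central character equals $\chi_0$.

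First I would compute the central character of $\St(\chi)$. Either from the description of $\mathscr{L}(\Sp(\pi',\, d))$ as a sum of absolute-value twists of $\mathscr{L}(\pi')$ (page 381 of \cite{Kud91}), or directly from the realization of $\Sp(\chi,\, n)$ as the unique irreducible quotient of $I_M^G$ applied to $\left(\chi|\det|^{(1-n)/2}\right)\otimes\cdots\otimes\left(\chi|\det|^{(n-1)/2}\right)$ on $\GL_1(L)^n$, one checks that the modulus character $\delta_P$ and the exponents of $|\det|$ contribute trivially on the diagonal center of $\GL_n(L)$ (the exponents sum to zero), so that the central character of $\St(\chi)$ is $\chi^n$. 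Thus the problem becomes: bound the number of solutions $\chi$ to $\chi^n = \chi_0$.

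Then I would carry out an elementary count using $L^\times\cong\varpi^{\ZZ}\times\oo_L^\times$ for a chosen uniformizer $\varpi$, together with $\oo_L^\times\cong\mu_{q-1}\times(1+\pp_L)$. Since $\chi_0$ is unramified, $\chi_0|_{\oo_L^\times}$ is trivial, so any solution $\chi$ satisfies $(\chi|_{\oo_L^\times})^n = 1$; because $1+\pp_L$ is pro-$p$ and $p > n$, this forces $\chi$ to be trivial on $1+\pp_L$, while on the cyclic group $\mu_{q-1}$ the equation $\chi^n = 1$ has exactly $\gcd(n,\, q-1)\leq n$ solutions. Finally $\chi(\varpi)$ may be any of the $n$ complex $n$-th roots of $\chi_0(\varpi)$. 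Multiplying the numbers of choices gives at most $n\cdot n = n^2$ quasi-characters $\chi$, hence at most $n^2$ representations $\St(\chi)$ with central character $\chi_0$.

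The only step requiring genuine care is the computation of the central character of $\St(\chi)$ — equivalently, the fact that the untwisted generalized Steinberg representation has trivial central character, so that twisting by $\chi\circ\det$ produces central character $\chi^n$. The remaining counting is routine, and the hypothesis $p > n$ is used precisely to eliminate contributions from the pro-$p$ part $1 + \pp_L$.
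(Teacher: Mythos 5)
Your proposal is correct and follows essentially the same route as the paper: compute that the central character of $\St(\chi)$ is $\chi^n$, use $p > n$ to force triviality on the pro-$p$ group $1+\pp_L$, and count at most $n$ choices each for $\chi|_{\oo_L^\times}$ and $\chi(\varpi)$. Your added justification of the central-character computation and of the injectivity $\chi \mapsto \St(\chi)$ only makes explicit what the paper takes for granted.
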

\begin{proof} The central character of $\St(\chi)$ is $\chi^n = \chi_0$, so $\chi(\varpi_L)$ is one of the $n$ roots of $x^n = \chi_0(\varpi)$.  Moreover $\chi^n|_{\oo_{L}^\times}$ is trivial, so $\chi$ must be trivial on $1 + \pp_L$: otherwise it attains the value $\zeta_p$ and $\zeta_p^n \neq 1$ since $p > n$.  Therefore, $\chi|_{\oo_L^\times}$ factors through a cyclic group with generator $\alpha$, and we must have $\chi(\alpha)^n = 1$, so $\chi|_{\oo_L^\times}$ is one of at most $n$ characters. This completes the proof, since $L^\times = \oo_L^\times \times \varpi_L^\ZZ$.
\end{proof}

\section{Computing multiplicities of depth-zero discrete series representations}

In this section, we will complete the proof of \ref{DiscreteSeriesSmallCor}.  Fix $A\geq 1,\,\epsilon > 0$ and let $p > nA$, so that if $\pp\mid p$ and $\pi$ satisfies $[\QQ(\pi):\QQ] \leq A$, then $\pi$ has depth zero.  In view of this fact, it is enough to prove:

\begin{prop} \label{DiscreteSeriesSmall} Fix $\epsilon > 0,\,A \geq 1$ and fix $d\mid n$.  There is a $P_0 > nA \in \ZZ$ such that, for any $p > P_0$, the following holds: For every $L$ of residue characteristic $p > P_0$, consider $\Gamma(\pp^r) \leq \GL_n(L)$.  For every $r$, and for every unramified character $\chi: L^\times \to \CC^\times$, we have
\begin{equation}\label{DSS1}
\sum_{\substack{
	\\ \pi \in \Pi(\GL_n(L),\,\chi)
	\\ \pi = \Sp(\pi',\,d)
	\\ [\QQ(\pi):\QQ] \leq A
	\\ \dep(\pi) = 0 }}
		\deg(\pi)\cdot \dim \pi^{\Gamma(\pp^r)}
\leq \epsilon \cdot \vol(\Gamma(\pp^r) Z/Z)^{-1}.\end{equation}
\end{prop}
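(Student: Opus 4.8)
The plan is to split the sum in (\ref{DSS1}) according to whether $d<n$ or $d=n$, and in each case to reduce the bound to the assertion that all depth-zero discrete series of the form $\Sp(\pi',d)$ with a fixed central character have the same formal degree and the same dimension of $\Gamma(\pp^r)$-fixed vectors. We may assume $r\geq 1$: when $r=0$ we have $\Gamma(\pp^0)=\GL_n(\oo)$, and no discrete series $\Sp(\pi',d)$ has a $\GL_n(\oo)$-fixed vector (its supercuspidal support contains no unramified character, and for $n\geq 2$ even $\Sp(\chi_1,n)$ is not spherical), so the left side of (\ref{DSS1}) vanishes. Throughout write $q=|\FF_L|$; since the residue characteristic exceeds $P_0$ we have $q\geq p>P_0$, and it will suffice to make $P_0$ — hence $q$ — large in terms of $n,A,d,\epsilon$.

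Suppose first $d<n$, i.e.\ $m:=n/d\geq 2$. The first step is to show that, for fixed $d$, both $\deg(\Sp(\pi',d))$ and $\dim\Sp(\pi',d)^{\Gamma(\pp^r)}$ are independent of the depth-zero supercuspidal $\pi'$ of $\GL_m(L)$; call the common values $\deg_d$ and $D_{d,r}$. For the formal degree this follows from the explicit computations of Aubert and Plymen \cite{AP05}, which express $\deg(\Sp(\pi',d))$ through $\deg(\pi')$, $q$ and $d$, together with the fact that a depth-zero supercuspidal $\pi'=\pi'_\theta$ is compactly induced from a type inflated from a cuspidal representation $\lambda_\theta$ of $\GL_m(\FF_q)$, so $\deg(\pi')=\dim\lambda_\theta\cdot\vol(\GL_m(\oo)Z/Z)^{-1}$, while every cuspidal irreducible $\GL_m(\FF_q)$-representation has dimension $\prod_{i=1}^{m-1}(q^i-1)$. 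For the fixed-vector dimension one compares the Harish-Chandra--Howe local character expansions of $\Sp(\pi',d)$ and $\Sp(\pi'',d)$ on $\Gamma(\pp^r)$, which (for $r\geq 1$, exceeding the depth) consists of topologically unipotent elements: by Murnaghan's asymptotic character expansion these expansions coincide, since the two representations have the same wave-front set — the nilpotent orbit of Jordan type $(d,\dots,d)$ — and the only residue-field quantities appearing in the expansion coefficients are the Green functions of $\GL_m(\FF_q)$, which do not depend on the cuspidal parameter. Granting this, let $N_{\leq A}$ and $N_{\mathrm{all}}$ denote the numbers of depth-zero $\pi=\Sp(\pi',d)$ with $\chi_\pi=\chi$ that satisfy respectively $[\QQ(\pi):\QQ]\leq A$ and no further condition; Corollary \ref{CountingCorollary}(ii), applied with $P_0$ at least its constant $Q_0(\epsilon,A,n,d)$, gives $N_{\leq A}\leq\epsilon\,N_{\mathrm{all}}$. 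Then
$$\sum_{\substack{\pi=\Sp(\pi',d),\ \dep(\pi)=0\\ \chi_\pi=\chi,\ [\QQ(\pi):\QQ]\leq A}}\deg(\pi)\,\dim\pi^{\Gamma(\pp^r)}\;=\;N_{\leq A}\,\deg_d\,D_{d,r}\;\leq\;\epsilon\,N_{\mathrm{all}}\,\deg_d\,D_{d,r},$$
and $N_{\mathrm{all}}\,\deg_d\,D_{d,r}=\sum_{\pi=\Sp(\pi',d),\,\dep=0,\,\chi_\pi=\chi}\deg(\pi)\dim\pi^{\Gamma(\pp^r)}\leq\sum_{\pi\in\Pi^{ds}(\GL_n(L),\chi)}\deg(\pi)\dim\pi^{\Gamma(\pp^r)}$, which, by the Plancherel formula for $\mupl_\chi$ applied to the idempotent $e_{\Gamma(\pp^r),\chi}$ (legitimate since $\chi$ is unramified, hence trivial on $\Gamma(\pp^r)\cap Z$, and since $\mupl_\chi$ gives mass $\deg(\pi)$ to each discrete series), equals $\vol(\Gamma(\pp^r)Z/Z)^{-1}$. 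This is (\ref{DSS1}) in this case.

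For $d=n$ the representations in question are the twists $\Sp(\chi_1,n)=\St_n\otimes(\chi_1\circ\det)$ with $\chi_1$ an unramified character of $L^\times$ and $\chi_1^n=\chi$; since $p\nmid n$ there are at most $n$ of them, and because $\chi_1\circ\det$ is trivial on $\Gamma(\pp^r)$ for $r\geq 1$ each has formal degree $\deg(\St_n)$ and the same $\dim\St_n^{\Gamma(\pp^r)}$, so the left side of (\ref{DSS1}) is at most $n\,\deg(\St_n)\,\dim\St_n^{\Gamma(\pp^r)}$. Now $\St_n$ is a subquotient of a principal series representation of $\GL_n(L)$ induced from an unramified character of the diagonal torus, and by the Iwasawa decomposition the $\Gamma(\pp^r)$-fixed space of any such principal series is the space of functions on $B(\oo/\pp^r)\backslash\GL_n(\oo/\pp^r)$ ($B$ the upper-triangular Borel); since taking $\Gamma(\pp^r)$-invariants is exact, $\dim\St_n^{\Gamma(\pp^r)}\leq[\GL_n(\oo/\pp^r):B(\oo/\pp^r)]=q^{(r-1)\binom n2}\prod_{i=1}^n\tfrac{q^i-1}{q-1}\leq n!\,q^{r\binom n2}$. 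On the other hand $\vol(\Gamma(\pp^r)Z/Z)^{-1}=q^{(r-1)(n^2-1)}q^{\binom n2}\prod_{i=2}^n(q^i-1)\geq c_n\,q^{r(n^2-1)}$ for a constant $c_n>0$, and the Aubert--Plymen computation \cite{AP05} gives $\deg(\St_n)\leq C_n\,q^{\binom n2}$ with $C_n$ depending only on $n$. Since $n^2-1-\binom n2=\tfrac{(n-1)(n+2)}2\geq 2$ for $n\geq 2$, one obtains for every $r\geq 1$
$$\frac{n\,\deg(\St_n)\,\dim\St_n^{\Gamma(\pp^r)}}{\vol(\Gamma(\pp^r)Z/Z)^{-1}}\;\leq\;\frac{n\cdot n!\cdot C_n}{c_n}\;q^{\,\binom n2-r\frac{(n-1)(n+2)}2}\;\leq\;\frac{n\cdot n!\cdot C_n}{c_n}\;q^{-(n-1)},$$
which is $<\epsilon$ once $P_0$ (hence $q$) is large enough in terms of $n$ and $\epsilon$. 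Taking $P_0$ to be the maximum of the finitely many thresholds appearing above establishes (\ref{DSS1}).

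The counting input of Corollary \ref{CountingCorollary} and the Plancherel bound are in hand, and the $d=n$ estimate is elementary once $\deg(\St_n)$ is known; so the genuine obstacle is the first step of the case $d<n$: the constancy of $\deg(\Sp(\pi',d))$ and — more seriously — of $\dim\Sp(\pi',d)^{\Gamma(\pp^r)}$ over all depth-zero supercuspidals $\pi'$ of $\GL_m(L)$. The formal-degree statement reduces quickly to the equality of dimensions of cuspidal $\GL_m(\FF_q)$-representations together with the Aubert--Plymen formula, but the fixed-vector statement requires a careful comparison of character expansions — via Murnaghan's asymptotic expansion and the independence of the $\GL_m(\FF_q)$ Green functions from the cuspidal datum — and this is the main technical point to be carried out in this section.
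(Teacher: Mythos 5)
Your overall strategy coincides with the paper's: the same split into $d=n$ and $d<n$, the same Mackey/Iwasawa bound against $\vol(\Gamma(\pp^r)Z/Z)^{-1}$ in the Steinberg case, and, for $d<n$, the same reduction to the statement that $\deg(\Sp(\pi',d))$ and $\dim \Sp(\pi',d)^{\Gamma(\pp^r)}$ are independent of the depth-zero supercuspidal $\pi'$, followed by Corollary \ref{CountingCorollary} and the Plancherel mass of $e_{\Gamma(\pp^r),\chi}$. Two minor slips: in the $d=n$ case the depth-zero twists $\St_n\otimes(\chi_1\circ\det)$ with $\chi_1^n=\chi$ need not have $\chi_1$ unramified, only trivial on $1+\pp_L$, so the count is at most $n^2$ rather than $n$ (harmless for your estimate); and the Aubert--Plymen formula for $\deg(\Sp(\pi',d))$ involves not only $\deg(\pi')$, $q$, $d$ but also the torsion number of $\pi'$ and the conductor $f(\pi'\times\pi'^{\vee})$, so the formal-degree constancy also requires checking that these are independent of $\pi'$; for depth-zero supercuspidals they equal $m$ and $m^2-m$ respectively, which the paper verifies through the local Langlands correspondence.

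The genuine gap is exactly the point you flag and postpone: the constancy of $\dim\Sp(\pi',d)^{\Gamma(\pp^r)}$ in $\pi'$. The justification you sketch --- equal wave-front sets plus the assertion that only Green functions of $\GL_m(\FF_q)$ enter the expansion coefficients --- does not suffice: equality of wave-front sets controls only the leading coefficient of the Harish-Chandra--Howe expansion, whereas you need equality of $\tr\pi(e_{\Gamma(\pp^r)})$ for every $r\geq 1$, i.e.\ equality of the full character on $\Gamma(\pp)$, and the Green-function claim is asserted rather than proved. The paper closes this by a different mechanism: Murnaghan's asymptotic expansion about the semisimple element $s_m$ attached to the depth-zero minimal $K$-type of $\Sp(\pi',d)$ (valid on $1+\pp M_n(\oo_L)$ precisely because the depth is zero), combined with the Hecke algebra isomorphism $\HH(G\,/\,/\,B_m,\,\tau^*)\cong \HH(\GL_d(L_m)\,/\,/\,B_H,\,\one)$, under which $\Sp(\pi',d)$ corresponds to an (unramified-type) twist of the Steinberg representation of $\GL_d(L_m)$; the expansion coefficients are then, up to a single measure-normalization constant, those of $\St$ of $\GL_d(L_m)$, hence manifestly independent of $\pi'$. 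A complete write-up of your plan needs an argument of this strength (or an actual computation of all relevant coefficients) at this step; everything else in your proposal is in order.
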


If we fix $n$, there are finitely many $d \mid n$, and if $p > nA$ then all representations $\pi$ with $[\QQ(\pi):\QQ] \leq A$ have depth zero, so it is clear that this implies Proposition \ref{DiscreteSeriesSmallCor}

We will prove Proposition \ref{DiscreteSeriesSmall} in two different cases: the case where $d = n$ (so $\pi = \Sp(\pi',\,n)$ is a \emph{standard} Steinberg representation), and separately in the case $d < n$.

\subsection{Multiplicities of Steinberg representations}

In this section, we prove Proposition \ref{DiscreteSeriesSmall} in the case $d = n$.  As always, $L$ has prime ideal $\pp$, residue characteristic $p$, and $|\FF_L| = q$.  Recall that we have chosen a Haar measure such that $\K_\pp Z /Z$ has measure one, so
$$\vol(\Gamma(\pp^r)Z/Z)^{-1} = q^{(n^2 - 1)(r - 1)}|\PGL_2(\FF_q)| > c\cdot q^{(n^2 - 1)r}$$
where $c > 0$ is a constant independent of $q,\,n$, and $r$.

When $\pi$ is a Steinberg representation, it follows from (2.2.2') of \cite{CMS90} that
$$\deg(\pi) = \frac{1}{n} \prod_{k = 1}^{n-1} (q^k - 1) \leq \frac{1}{n} q^{n(n-1)/2}$$

We need to compute an upper bound on $\dim \pi^{\Gamma(\pp^r)}$.

\begin{lem} There is a $C > 0$ such that, for all $L$ with $|\FF_L| = q$ and all characters $\chi_{0}: L^\times \to \CC^\times$ of conductor at most $1$, we have
$$\dim \St(\chi_0)^{\Gamma(\pp^r)} \leq C q^{rn(n-1)/2}.$$
\end{lem}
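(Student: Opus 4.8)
The plan is to strip off the central character, realise the space of $\Gamma(\pp^r)$-fixed vectors of the (twisted) Steinberg representation inside a fixed unramified principal series, and then reduce the dimension estimate to counting points of a flag variety over $\oo/\pp^r$.

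First I would reduce to the untwisted case $\chi_0 = \one$. By definition $\St(\chi_0) = \St \otimes (\chi_0 \circ \det)$, and for $g \in \Gamma(\pp^r)$ one has $\det g \in 1 + \pp^r \subseteq 1 + \pp$; since $\chi_0$ has conductor at most $1$ it is trivial on $1 + \pp$, so $\chi_0 \circ \det$ is trivial on $\Gamma(\pp^r)$. Hence $\St(\chi_0)$ and $\St$ have isomorphic restrictions to $\Gamma(\pp^r)$, and in particular $\dim \St(\chi_0)^{\Gamma(\pp^r)} = \dim \St^{\Gamma(\pp^r)}$; so it suffices to bound the latter, uniformly in $L$ and $r$.

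Next I would use the description of $\St$ recalled above: $\St = \Sp(\one, n)$ is the unique irreducible quotient of $I_T^G \pi'_T$, where $T$ is the diagonal torus of $\GL_n(L)$ and $\pi'_T = |\det|^{(1-n)/2} \otimes \cdots \otimes |\det|^{(n-1)/2}$ is an \emph{unramified} character of $T$. Since $\Gamma(\pp^r)$ is a compact open subgroup, the functor of $\Gamma(\pp^r)$-invariants on smooth representations is exact (average over $\Gamma(\pp^r)$), so $\dim(-)^{\Gamma(\pp^r)}$ is additive in short exact sequences, and therefore $\dim \St^{\Gamma(\pp^r)} \leq \dim (I_T^G \pi'_T)^{\Gamma(\pp^r)}$. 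Because $\pi'_T$ is unramified, the Iwasawa decomposition $\GL_n(L) = B \cdot \GL_n(\oo)$ (with $B$ the upper-triangular Borel) identifies $I_T^G \pi'_T$, restricted to $\GL_n(\oo)$, with the space of locally constant functions on $B(\oo) \backslash \GL_n(\oo)$; taking $\Gamma(\pp^r)$-invariants and reducing modulo $\pp^r$ (the map $\GL_n(\oo) \to \GL_n(\oo/\pp^r)$ is onto with kernel $\Gamma(\pp^r)$ and sends $B(\oo)$ onto $B(\oo/\pp^r)$) gives $\dim (I_T^G \pi'_T)^{\Gamma(\pp^r)} = |B(\oo/\pp^r) \backslash \GL_n(\oo/\pp^r)|$. (Alternatively, one could use the Solomon--Tits resolution of $\St$ by the modules $\Ind_{P_J}^G \one$ over the standard parabolics $P_J$ and bound the resulting alternating sum termwise; the dominant term is again $|B(\oo/\pp^r) \backslash \GL_n(\oo/\pp^r)|$.)

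Finally I would carry out the point count. The set $B(\oo/\pp^r) \backslash \GL_n(\oo/\pp^r)$ is the set of $(\oo/\pp^r)$-points of the flag variety $\GL_n/B$, which is smooth of relative dimension $n(n-1)/2$; hence its cardinality equals $q^{n(n-1)(r-1)/2} \cdot |(\GL_n/B)(\FF_q)|$, and $|(\GL_n/B)(\FF_q)| = \prod_{i=1}^{n} \frac{q^i - 1}{q - 1} \leq n!\, q^{n(n-1)/2}$. Combining these yields $\dim \St(\chi_0)^{\Gamma(\pp^r)} = \dim \St^{\Gamma(\pp^r)} \leq n!\, q^{r n(n-1)/2}$, so the lemma holds with $C = n!$ (depending only on the fixed integer $n$). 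I do not expect a genuine obstacle here: the argument is an assembly of standard facts. The one place requiring a little care is the identification of $(I_T^G \pi'_T)^{\Gamma(\pp^r)}$ with the set of $\oo/\pp^r$-points of the flag variety, i.e.\ checking that, because the inducing character is unramified, the parabolic induction contributes nothing beyond the combinatorics of $B(\oo) \backslash \GL_n(\oo)$; once that is in place the point count is elementary.
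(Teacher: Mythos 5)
Your argument is correct and follows essentially the same route as the paper: reduce to the trivial twist using that $\chi_0\circ\det$ is trivial on $\Gamma(\pp^r)$, bound $\dim \St^{\Gamma(\pp^r)}$ by the invariants of the full induced representation from the Borel (exactness of $\Gamma(\pp^r)$-invariants), and reduce to counting $B\backslash G/\Gamma(\pp^r)$, which is the flag-variety count over $\oo/\pp^r$ of size $O(q^{rn(n-1)/2})$. Your use of the Iwasawa decomposition and an explicit point count of $B(\oo/\pp^r)\backslash \GL_n(\oo/\pp^r)$ is just a slightly more explicit version of the paper's Mackey-theory computation, yielding the same bound (with $C = n!$).
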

\begin{proof} Since $\chi$ has conductor $1$ and $r \geq 1$, it is clear that $\St(\chi)$ and $\St(\one)$ have the same dimension of $\Gamma(\pp^r)$-fixed vectors.  Let $I$ denote the (unnormalized) induction $\Ind_{B}^G \one$, where $\one$ is the trivial representation of the Borel subgroup $B$.  Then $\dim \St^{\Gamma(\pp^r)} \leq \dim I^{\Gamma}$ since $\St$ is a quotient of $I$ (for admissible representations, $\dim \pi^{\Gamma} = \tr \pi(e_\Gamma)$, so the function $\pi \mapsto \dim \pi^{\Gamma}$ is additive in exact sequences).

Let $V$ be the space of $I$.  Using Mackey's Theorem, we have
\begin{align*} V^{\Gamma} & \cong \bigoplus_{g\in B \bs G / \Gamma} \CC^{B \cap g \Gamma g^{-1}}
\\ & = \bigoplus_{g\in B \bs G / \Gamma} \CC
\end{align*}
so it suffices to find the cardinality of $B \bs G /\Gamma$.  Since $B\K = G$ and $\Gamma$ is normal in $\K$, we have
$$B \bs G / \Gamma = (B \cap \K) \bs \K / \Gamma = ((B\cap K) \Gamma) \bs \K.$$

The group $(B \cap \K)\cdot \Gamma(\pp^r)$ consists of those matrices in $\K$ such that the elements below the diagonal are in $\pp^r$.  As such, there is a fixed $C$ such that $((B \cap \K)\cdot \Gamma(\pp^r)) \bs \K \leq C q^{r(n-1)n/2}$, completing the proof.
\end{proof}

Moreover, there are at most $n^2$ Steinberg representations with given central character, this proves
$$\sum_{
		\St(\chi_0) \in \Pi(\GL_n(L),\,\chi} 
	\deg(\St(\chi_0)) \dim \St(\chi_0)^{\Gamma(\pp^r)} 
	\leq nCq^{(r+1)n(n-1)/2}.$$

This completes the proof in the Steinberg case.

\subsection{Multiplicities of generalized Steinberg representations}

In this section, we prove Proposition \ref{DiscreteSeriesSmall} in the case $d \neq n$.  We will prove the following lemma:
\begin{lem}\label{equalities} Assume $L$ has residue characteristic $p > 2n$ and fix $d$ with $n = md$.  Let $\pi_i'$ be a depth-zero supercuspidal $\GL_m(L)$ representation and let $\pi_i = \Sp(\pi_i',\,d)$ for $i = 1,\,2$.  Then
$$\deg(\pi_1) = \deg(\pi_2)$$
and
$$\dim \pi_1^{\Gamma(\pp^r)} = \dim \pi_2^{\Gamma(\pp^r)}$$
for all $r$.
\end{lem}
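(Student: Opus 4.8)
The plan is to establish the two equalities separately, in each case reducing to the observation that the finite-group data attached to the depth-zero supercuspidals $\pi_1'$ and $\pi_2'$ coincide. For the formal degrees, recall that a depth-zero supercuspidal representation $\pi_i'$ of $\GL_m(L)$ is compactly induced from $Z\cdot\GL_m(\oo_L)$ of an extension of the inflation of a cuspidal representation $\sigma_i$ of $\GL_m(\FF_q)$ attached to a character of $\FF_{q^m}^{\times}$ in general position; hence $\deg(\pi_i')=\dim(\sigma_i)/\vol(Z\GL_m(\oo_L)/Z)$, and since every cuspidal representation of $\GL_m(\FF_q)$ in general position has dimension $\prod_{k=1}^{m-1}(q^{k}-1)$ we obtain $\deg(\pi_1')=\deg(\pi_2')$. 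I would then invoke the explicit formula of Aubert and Plymen \cite{AP05} for $\deg(\Sp(\pi',d))$, which expresses it in terms of $\deg(\pi')$ together with further quantities (the number of unramified self-twists of $\pi'$ and the $\mu$-function/Plancherel factors) that one checks, for a depth-zero supercuspidal of $\GL_m(L)$, depend only on $q$, $m$, and $d$ --- for instance a short computation with admissible pairs shows the number of unramified $\chi$ with $\pi_i'\otimes(\chi\circ\det)\cong\pi_i'$ equals $m$. Hence all inputs agree and $\deg(\pi_1)=\deg(\pi_2)$.

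For the dimensions of spaces of $\Gamma(\pp^r)$-fixed vectors I would pass to Harish-Chandra characters. Since $r\geq 1$ gives $\Gamma(\pp^r)\subseteq\Gamma(\pp)$, and $\dim\pi_i^{\Gamma(\pp^r)}=\tr\pi_i(e_{\Gamma(\pp^r)})=\vol(\Gamma(\pp^r))^{-1}\int_{\Gamma(\pp^r)}\Theta_{\pi_i}(g)\,dg$, only the restriction of the character $\Theta_{\pi_i}$ to the pro-unipotent radical $\Gamma(\pp)$ of $\GL_n(\oo_L)$ is relevant. Because $p>2n$, the representation $\Sp(\pi_i',d)$ has depth zero, and I would use the Harish-Chandra--Howe local character expansion together with the asymptotic character expansion of Murnaghan \cite{Mur03} for depth-zero representations --- a homogeneity statement making the expansion valid on all of $\Gamma(\pp)$, and a mock-exponential parametrization $\mathbf{X}\colon\Gamma(\pp)\xrightarrow{\sim}\pp\,\mathrm{M}_n(\oo_L)$ --- to write $\Theta_{\pi_i}(g)=\sum_{\OO}c_{\OO}(\pi_i)\,\widehat{\mu}_{\OO}(\mathbf{X}(g))$ on $\Gamma(\pp)$, the sum over nilpotent orbits $\OO$ in $\mathfrak{gl}_n(L)$. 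Integrating over $\Gamma(\pp^r)$ and using that $\int_{\pp^{r}\mathrm{M}_n(\oo_L)}\widehat{\mu}_{\OO}$ is an explicit power of $q$ depending only on $r$ and $\OO$, one expresses $\dim\pi_i^{\Gamma(\pp^r)}$ as one and the same $\pi_i$-independent linear combination of the $c_{\OO}(\pi_i)$; it therefore suffices to prove $c_{\OO}(\pi_1)=c_{\OO}(\pi_2)$ for every $\OO$.

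To match the coefficients I would use that $\Sp(\pi_i',d)$ has depth zero with associated parahoric the subgroup $\mathcal P\subset\GL_n(\oo_L)$ reducing modulo $\pp$ to the standard parabolic of $\GL_n(\FF_q)$ with Levi $\GL_m(\FF_q)^{d}$, that the character of the $\GL_m(\FF_q)^{d}$-module $\Sp(\pi_i',d)^{\mathcal P^{+}}$ agrees on unipotent elements with $\Theta_{\sigma_i}^{\otimes d}$, and that Murnaghan's expansion expresses each $c_{\OO}(\Sp(\pi_i',d))$ through precisely those unipotent character values (all remaining ingredients --- universal constants, the Green functions of $\GL_n(\FF_q)$, the geometry of the nilpotent cone --- being independent of $i$). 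By Deligne--Lusztig theory the value of $\Theta_{\sigma_i}$ at a unipotent element of $\GL_m(\FF_q)$ is, up to a sign depending only on the ranks, the Green function attached to the Coxeter torus $\FF_{q^m}^{\times}$, hence is independent of the regular character defining $\sigma_i$. Therefore $\Theta_{\sigma_1}$ and $\Theta_{\sigma_2}$ agree on unipotent elements, the coefficients agree, and $\dim\pi_1^{\Gamma(\pp^r)}=\dim\pi_2^{\Gamma(\pp^r)}$ for every $r$ (the case $r=0$ being trivial, both sides vanishing since neither $\Sp(\pi_i',d)$ is unramified).

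The step I expect to be the main obstacle is this last one: one must verify that Murnaghan's asymptotic character expansion genuinely applies to the (in general non-supercuspidal) discrete series $\Sp(\pi_i',d)$ --- identifying its parahoric datum and the $\GL_m(\FF_q)^{d}$-module $\Sp(\pi_i',d)^{\mathcal P^{+}}$, which is where the type-theoretic / affine-Hecke-algebra description of depth-zero discrete series is needed --- and then extract cleanly that the $\pi_i$-dependence of the expansion coefficients is carried entirely by the finite cuspidal character $\sigma_i$, and only through its values on unipotent classes. The hypothesis $p>2n$ is what forces depth zero and makes the mock-exponential and the homogeneity statement available down to $\Gamma(\pp)$.
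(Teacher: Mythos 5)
The first half of your proposal (equality of formal degrees) is essentially the paper's own argument: reduce to the Aubert--Plymen formula for $\deg(\Sp(\pi',d))$, note $\deg(\pi_1')=\deg(\pi_2')$ from compact induction, and check the remaining inputs depend only on $q,m,d$, e.g.\ the torsion number is $m$. The one input you wave at rather than verify is the conductor of the pair $f(\pi_i'\times\pi_i'^{\vee})$ appearing in that formula; the paper pins it down as $m^2-m$ via the local Langlands correspondence (the restriction of $\rec(\pi_i')$ to inertia is a sum of $m$ pairwise distinct tame characters). That is a small omission, and this half is fine.

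The second half has a genuine gap at exactly the step you flag as the main obstacle, and the mechanism you propose for it is not the right one. Murnaghan's expansion for $\Sp(\pi_i',d)$ is not the identity-centered Harish-Chandra--Howe expansion over nilpotent orbits with coefficients read off from the unipotent character values of $\sigma_i^{\otimes d}$ (Green functions of $\GL_n(\FF_q)$); it is an asymptotic expansion about the semisimple element $s_m$ (a lift to $M_n(L)$ of a generator of $\FF_{q^m}$), over the orbits containing $s_m$ in their closure, and its coefficients are identified --- via the depth-zero type $(B_m,\tau)$ and the Hecke algebra isomorphism $\HH(G//B_m,\,\tau^*)\cong\HH(H//B_H,\,\one)$ with $H=C_G(s_m)\cong\GL_d(L_m)$ --- with the $0$-asymptotic expansion coefficients of a Steinberg representation of $H$; independence of $\pi_i'$ then follows because all depth-zero twists $\St_H(\chi_m)$ have equal characters on $\Gamma(\pp_{L_m})$. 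Your proposed bridge from $\Sp(\pi_i',d)^{\mathcal{P}^+}\cong\sigma_i^{\otimes d}$ to the expansion coefficients cannot work as stated: $\Gamma(\pp)\subseteq\mathcal{P}^+$ acts trivially on $\pi_i^{\mathcal{P}^+}$, so character values on $\Gamma(\pp)$ are traces on the full infinite-dimensional space and are not computed from that finite module; a priori they involve $\theta_i$ through Fourier transforms of elliptic semisimple orbital integrals (this is visible already for $d=1$), and showing that this dependence drops out is precisely the content of the paper's use of Murnaghan's theorem, not something Deligne--Lusztig unipotent values give you for free. Moreover, granting the homogeneity input needed to make the identity-centered expansion valid on all of $\Gamma(\pp)$, the equality $c_{\OO}(\pi_1)=c_{\OO}(\pi_2)$ for all nilpotent $\OO$ is equivalent to the character equality on $\Gamma(\pp)$ you want, so your reduction restates the problem rather than solving it; the actual coefficient matching still has to be supplied, e.g.\ by the $s_m$-expansion argument above.
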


\begin{proof}[Proof of \ref{DiscreteSeriesSmall} assuming \ref{equalities}]
Let $e_\Gamma \in \HH(\GL_n(L))$ be the idempotent element corresponding to $\Gamma(\pp^r)$ and let $e_{\Gamma,\,\chi} \in \HH(\GL_n(L),\,\chi)$ be its image under the averaging map.  Then
$$\vol(\Gamma(\pp^r)Z/Z)^{-1}  = e_{\Gamma,\,\chi}(1) = \mupl_{\chi}(\wh e_{\Gamma,\chi})$$
which, in particular, is greater than the integral of $\wh e_{\Gamma,\,\chi}$ over the subset of $\Pi(\GL_n(L),\,\chi)$ consisting of the representations $\pi = \Sp(\pi',\, d)$.  Therefore we have
$$\vol(\Gamma(\pp^r) Z/Z)^{-1} \geq  
\sum_{\substack{
	\\ \pi \in \Pi(\GL_n(L),\,\chi)
	\\ \pi = \Sp(\pi',\,d)
	\\ \dep(\pi) = 0}}
		\deg(\pi)\cdot \dim \pi^{\Gamma(\pp^r)}$$
and so
$$\vol(\Gamma(\pp^r)Z/Z) \sum_{\substack{
	\\ \pi \in \Pi(\GL_n(L),\,\chi)
	\\ \pi = \Sp(\pi',\,d)
	\\ [\QQ(\pi):\QQ] \leq A
	\\ \dep(\pi) = 0 }}
		\deg(\pi)\cdot \dim \pi^{\Gamma(\pp^r)}$$
$$\leq 
\left(\sum_{\substack{
	\\ \pi \in \Pi(\GL_n(L),\,\chi)
	\\ \pi = \Sp(\pi',\,d)
	\\ [\QQ(\pi):\QQ] \leq A
	\\ \dep(\pi) = 0}}
		\deg(\pi)\cdot \dim \pi^{\Gamma(\pp^r)}\right)
\bigg/
\left(\sum_{\substack{
	\\ \pi \in \Pi(\GL_n(L),\,\chi)
	\\ \pi = \Sp(\pi',\,d)
	\\ \dep(\pi) = 0}}
		\deg(\pi)\cdot \dim \pi^{\Gamma(\pp^r)}\right).
$$

By Lemma \ref{equalities}, the terms of the sums have the same value (in the numerator and the denominator), so by Corollary \ref{CountingCorollary}, the quotient is at most $\epsilon$.
\end{proof}

We'll first prove that the formal degrees of two such representations are equal:

\begin{lem} Let $\pi'$ be a depth-zero supercuspidal $\GL_m(L)$ representation and let $\pi = \Sp(\pi',\,d)$.  Let $\St_{m}$ denote the Steinberg representation of $\GL_m(L)$. Then
$$\deg(\pi) =
	\deg(\St_m)^d
	\cdot \frac{m^d}{d}
	\cdot \frac{(q^m - 1)^d}
		{q^{md} - 1}
	\cdot \frac{|\GL_{dm}(\FF_q)|}
		{|\GL_{m}(\FF_q)|^d}
	\cdot q^{-m^2 \frac{d^2 - d}{2}}
$$
\end{lem}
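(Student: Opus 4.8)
The plan is to reduce the computation to two inputs: the (elementary) formal degree of the depth-zero supercuspidal $\pi'$, and the explicit Plancherel measure of $\GL_n(L)$ from \cite{AP05}, which governs the formal degree of $\Sp(\pi',d)$; the formula in the statement will then follow by a purely algebraic rearrangement.

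First I would record the formal degree of $\pi'$. A depth-zero supercuspidal representation of $\GL_m(L)$ is compactly induced from the open, compact-modulo-center subgroup $L^\times\GL_m(\oo_L)$ of an irreducible representation $\Lambda$ whose restriction to $\GL_m(\oo_L)$ is the inflation of an irreducible cuspidal representation $\bar\lambda$ of $\GL_m(\FF_q)$, and $\dim\bar\lambda=\prod_{k=1}^{m-1}(q^k-1)$. By the standard formula for the formal degree of a representation compactly induced from an open subgroup that is compact modulo the center, together with the normalization $\vol(\GL_m(\oo_L)Z/Z)=1$ of Remark \ref{HaarChoice}, this gives $\deg(\pi')=\dim\bar\lambda=\prod_{k=1}^{m-1}(q^k-1)$. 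Comparing with $\deg(\St_m)=\tfrac1m\prod_{k=1}^{m-1}(q^k-1)$, recorded above from \cite{CMS90}, we get $\deg(\pi')=m\,\deg(\St_m)$, so that the first two factors $\deg(\St_m)^d\cdot m^d$ on the right-hand side of the statement are nothing but $\deg(\pi')^d=\prod_{k=1}^{m-1}(q^k-1)^d$.

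Second, I would compute $\deg(\Sp(\pi',d))$ itself. Writing $n=md$ and letting $M\cong\GL_m(L)^d$ be the standard Levi, $\Sp(\pi',d)$ is the essentially-square-integrable Langlands quotient of the normalized induction of the segment $\pi'|\det|^{(1-d)/2}\otimes\cdots\otimes\pi'|\det|^{(d-1)/2}$, and its formal degree is given by Harish-Chandra's formula as $d^{-1}\deg(\pi')^d$ times a product of residues of the Harish-Chandra $\mu$-function for $\GL_m(L)\times\GL_m(L)$; this is exactly what \cite{AP05} makes explicit. The base appearing in those factors is $q^m$ rather than $q$, because for a depth-zero supercuspidal $\pi'$ of $\GL_m(L)$ — coming from a character of $\FF_{q^m}^\times$ in general position — the group of unramified characters $\chi$ of $L^\times$ with $\pi'\chi\cong\pi'$ has order $m$; equivalently, under a Bushnell--Kutzko simple type $\Sp(\pi',d)$ corresponds to the Steinberg module of the affine Hecke algebra of $\GL_d$ with parameter $q^m$, and the index $[\GL_n(\oo_L):P]$ of the parahoric $P$ of $\GL_n(L)$ attached to the partition $(m,\dots,m)$ of $n$ — equal to $|\GL_{dm}(\FF_q)|/(|\GL_m(\FF_q)|^d q^{m^2d(d-1)/2})$ — is exactly the volume factor entering the transfer of formal degrees through the type. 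Carrying this through, one finds that $\deg(\Sp(\pi',d))$ equals $\tfrac1d\prod_{k=1}^{dm-1}(q^k-1)$ (which specializes, when $d=n$ and $m=1$, to the Steinberg value used in the previous subsection).

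It then remains only to verify that the right-hand side of the statement also equals $\tfrac1d\prod_{k=1}^{dm-1}(q^k-1)$, which is elementary: from $|\GL_{dm}(\FF_q)|=q^{\binom{dm}{2}}\prod_{k=1}^{dm}(q^k-1)$ and $|\GL_m(\FF_q)|^d=q^{d\binom{m}{2}}\prod_{k=1}^m(q^k-1)^d$ one gets $|\GL_{dm}(\FF_q)|/|\GL_m(\FF_q)|^d=q^{m^2d(d-1)/2}\cdot\prod_{k=1}^{dm}(q^k-1)/\prod_{k=1}^m(q^k-1)^d$, so the explicit power $q^{-m^2(d^2-d)/2}$ in the statement cancels the $q$-power; then, substituting $\deg(\St_m)^d m^d=\prod_{k=1}^{m-1}(q^k-1)^d$ and $\prod_{k=1}^m(q^k-1)^d=(q^m-1)^d\prod_{k=1}^{m-1}(q^k-1)^d$, everything collapses to $\tfrac1d\prod_{k=1}^{dm-1}(q^k-1)$. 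I expect the only real obstacle to be the second step — correctly identifying the parameter $q^m$ and pinning down all the volume and index normalizations relating $\GL_n(L)$, the parahoric $P$, and the Hecke algebra — which is precisely the content imported from \cite{AP05}; steps one and three are routine bookkeeping.
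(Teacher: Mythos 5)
Your steps (1) and (3) are fine and agree with the paper: $\deg(\pi')=\prod_{k=1}^{m-1}(q^k-1)=m\deg(\St_m)$ (the paper gets this from Theorem 2.2.8 of \cite{CMS90}), and your algebraic check that the right-hand side of the lemma collapses to $\frac1d\prod_{k=1}^{md-1}(q^k-1)$ is correct. The problem is step (2), which is exactly where the content of the lemma lives, and there you assert the answer rather than derive it. The formula of \cite{AP05} (Theorem 6.3) that both you and the paper invoke expresses $\deg(\Sp(\pi',d))/\deg(\pi')^d$ in terms of \emph{two} invariants of $\pi'$: the torsion number $r$ (the number of unramified twists fixing $\pi'$) \emph{and} the conductor $f(\pi'\times\pi'^{\vee})$ of the pair. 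You address only the first, arguing $r=m$ from the general-position character, but you never compute the second. In the paper this is the one genuinely nontrivial input: one passes to the Galois side, uses depth preservation to see $\rec(\pi')$ is trivial on wild inertia, notes that its restriction to $I_L$ is a multiplicity-free sum of $m$ characters, and concludes $f(\rec(\pi')\otimes\rec(\pi')^{\vee})=m^2-m$; plugging $r=m$ and $f=m^2-m$ into Aubert--Plymen then yields precisely the stated formula (whose $q$-power $q^{-m^2(d^2-d)/2}$ comes from the exponent $\frac{d^2-d}{2}(f+r-2m^2)$).

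Your proposed substitute — identifying $\Sp(\pi',d)$ with the Steinberg module of an affine Hecke algebra with parameter $q^m$ via a simple type and transferring formal degrees using the parahoric index — is a plausible alternative route, and the index you quote is correct, but you do not carry out the transfer: the phrase ``carrying this through, one finds $\deg(\Sp(\pi',d))=\frac1d\prod_{k=1}^{dm-1}(q^k-1)$'' is precisely the claim to be proved, and all of the normalization bookkeeping (the trace/Plancherel normalization on the Hecke algebra, $\dim\lambda$, $\vol(J Z/Z)$, and the fixed-central-character conventions) is left unverified. So as written the proof has a gap at its central step: either compute $f(\pi'\times\pi'^{\vee})=m^2-m$ and apply the Aubert--Plymen formula as the paper does, or genuinely execute the type-theoretic formal-degree transfer; the happy fact that your asserted intermediate value is actually correct does not close the gap.
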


\begin{proof} First, if $\pi'$ is a depth-zero supercuspidal $\GL_m(L)$ representation, we may compute the formal degree $\deg(\pi')$ using Theorem 2.2.8 of \cite{CMS90}. In this case, $\pi'$ is associated to the admissible pair $(L_m,\, \eta)$ where $L_m/L$ is the unramified extension of degree $m$, and $\eta: L_m^\times \to \CC^\times$ is trivial on $1 + \pp_{L_m}^\times$.  Using the notation of that Theorem, we compute $\alpha(\theta) = m-1,\, f = m,\, e = 1$, so 
$$\deg(\pi') = m\cdot \deg(\St).$$

We now use this to compute $\deg \pi$; in view of Theorem 6.3 of \cite{AP05} we have
$$\frac{\deg(\pi)}{\deg(\pi')^d} = 
	\frac{m^{d-1}}{r^{d-1}d} 
	\cdot q^{\frac{d^2 - d}{2} (f(\pi'^{\vee} \times \pi') + r - 2m^2)}
	\cdot \frac{(q^r - 1)^d}{q^{dr} - 1}
	\cdot \frac{|\GL_{dm}(\FF_q)|}{|\GL_{m}(\FF_q)|^d}.$$
Here $r$ is the number of unramified characters $\chi: L^\times \to \CC^\times$ such that $\pi' \otimes (\chi \circ \det) \cong \pi'$; (or the \emph{torsion number} of $\pi'$), and $f(\pi' \times \pi'^{\vee})$ is the conductor of the pair $\pi' \times \pi''$.

We first prove $r = m$.  Let $\chi: L^\times \to \CC^\times$ be an unramified character.  We note that $\pi' = \pi_{\theta}$ where $\theta$ is a depth-zero character of $L_m^\times$ and $L_m$ is the unramified extension of $L$ of degree $m$.  Moreover $(\chi\circ \det) \otimes \pi_{\theta} \cong \pi_{(\chi \circ N_{L_m/L}) \cdot \theta}$.  If $\chi(\varpi_L)^m = 1$, then $\chi \circ N_{L_m/L} \equiv 1$, so the torsion number is at least $m$. To see it is exactly $m$, the central character of $\pi\otimes (\chi\circ \det)$ is $\chi_\pi \cdot \chi^m$. Thus, we must have $\chi_m(\varpi_L)^m = 1$, and there are $m$ such unramified characters.

To prove that $f(\pi' \times \pi'^{\vee}) = m^2 - m$, we use the local Langlands correspondence.  Let $\rec(\pi') = (\rho,\,V,\,N)$; since $\pi'$ is supercuspidal, $\rec(\pi)$ is irreducible, and since the monodromy is trivial and so we need only consider $(\rho,\, V)$ as a representation of the Weil group $W_L$.   

Then $f(\pi' \times \pi'^{\vee}) = f(\rho \otimes \rho^\vee)$.  Since the Langlands correspondence preserves depth, $\rho$ is trivial on the wild inertia subgroup $P_L$.  Since $I_L/P_L$ is abelian, $\rho|_{I_L}$ decomposes as a direct sum of abelian characters $\theta_1,\, \ldots,\, \theta_m$.  Because $\rho$ is irreducible and Frobenius-semisimple and since the Frobenius element permutes the spaces of the characters $\theta_i$, the characters $\theta_i$ are pairwise distinct.  Therefore $(\rho \otimes \rho^\vee)$ is trivial on the ramification group $I_L^1$, and the subspace of $I_L$-fixed vectors has dimension $m$ (corresponding to the spaces of $\theta_i \otimes \overline \theta_i$).  From the definition (when the monodromy is trivial), we discern 
$$f(\rho \otimes \rho^{\vee}) = \int_0^\infty \codim(V\otimes V^{\vee})^{I_L^j}\,dj = m^2 - m$$
as desired.

Now the proof is complete once we plug $f = m^2 - m,\, r = m$ into Aubert and Plymen's formula.
\end{proof}

To prove that $\dim \pi_1^{\Gamma(\pp^r)} = \dim \pi_2^{\Gamma(\pp^r)}$, we will prove an even stronger result.  Let $\Theta_{\pi}$ denote the Harish-Chandra character of $\pi$; this is a conjugation-invariant function, defined up to a set of measure zero, so that for any $e\in \HH(\GL_n(L))$, we have 
$$\tr \pi(e) = \int_{\GL_n(L)} \Theta_{\pi}(g) \cdot e(g)\,dg.$$
We will show the following:

\begin{lem} \label{CharacterEquality} As above, let $\pi_i'$ be a depth-zero supercuspidal $\GL_m(L)$ representation and let $\pi_i = \Sp(\pi_i,\, d)$ for $i = 1,\,2$.  Then $\Theta_{\pi_1}$ and $\Theta_{\pi_2}$ can be chosen to be equal on $\Gamma(\pp)$.
\end{lem}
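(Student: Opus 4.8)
The plan is to compare $\Theta_{\pi_1}$ and $\Theta_{\pi_2}$ on $\Gamma(\pp)$ by means of the Harish--Chandra--Howe local character expansion, in the form worked out for $\GL_n$ by Murnaghan \cite{Mur03}, and to check that every coefficient of that expansion is insensitive to the choice of depth-zero supercuspidal $\pi_i'$. Since $p > 2n$, the exponential map is a homeomorphism $\exp\colon \pp M_n(\oo) \iso 1 + \pp M_n(\oo) = \Gamma(\pp)$, and every element of $\pp M_n(\oo)$ is topologically nilpotent, hence lies in $\fg_{0+}$. By the local character expansion, together with DeBacker's homogeneity results and Murnaghan's description of its range of validity for depth-zero representations, one has, for $i = 1, 2$ and all $X \in \pp M_n(\oo)$,
\[ \Theta_{\pi_i}(\exp X) = \sum_{\mathcal{O}} c_{\mathcal{O}}(\pi_i)\, \widehat{\mu}_{\mathcal{O}}(X), \]
where $\mathcal{O}$ runs over the finitely many nilpotent $\GL_n(L)$-orbits in $\mathfrak{gl}_n(L)$, $\widehat{\mu}_{\mathcal{O}}$ is the Fourier transform of the associated nilpotent orbital integral, and $c_{\mathcal{O}}(\pi_i)$ are the germ coefficients. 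So it suffices to show $c_{\mathcal{O}}(\pi_1) = c_{\mathcal{O}}(\pi_2)$ for every $\mathcal{O}$; transporting the displayed identity back along $\exp$ then yields that $\Theta_{\pi_1}$ and $\Theta_{\pi_2}$ agree on $\Gamma(\pp)$, which (up to the usual null-set ambiguity) is the assertion.

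The first reduction is from $\Sp(\pi_i', d)$ to the supercuspidal $\pi_i'$. The assignment $\pi \mapsto (c_{\mathcal{O}}(\pi))_{\mathcal{O}}$ is additive on short exact sequences, so it factors through the Grothendieck group; it is unchanged by unramified twisting, since an unramified character is trivial on $\GL_n(\oo)$ and hence $\Theta_{\pi \otimes |\det|^s} = \Theta_{\pi}$ on a neighborhood of $1$; and it satisfies parabolic descent, i.e. for a standard Levi $M$ and an admissible $M$-representation $\tau$, the coefficient $c_{\mathcal{O}}(I_M^G \tau)$ is a linear combination, with coefficients depending only on $M$ and $\mathcal{O}$, of the coefficients $c_{\mathcal{O}_M}(\tau)$ over nilpotent $M$-orbits $\mathcal{O}_M$ whose induced orbit is $\mathcal{O}$. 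Now $\Sp(\pi_i', d)$ is the unique irreducible quotient of a parabolic induction $I_M^G \tau_i$ with $M \cong \GL_m(L)^d$ and $\tau_i$ an unramified twist of $(\pi_i')^{\boxtimes d}$, and the Bernstein--Zelevinsky decomposition of $I_M^G\tau_i$ in the Grothendieck group into generalized Steinberg constituents $\Sp(\pi_i',d')$ (tensored over sub-Levis, $d' < d$ for the nontrivial ones) has multiplicities depending only on $d$. Hence an induction on $d$ (base case $\Sp(\pi_i',1) = \pi_i'$), using multiplicativity of the germ coefficients over the $\GL_m(L)$-factors, reduces the claim to the single statement $c_{\mathcal{O}'}(\pi_1') = c_{\mathcal{O}'}(\pi_2')$ for all nilpotent orbits $\mathcal{O}'$ in $\mathfrak{gl}_m(L)$.

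For the supercuspidal case, write $\pi_i'$ as the representation compactly induced from $\GL_m(\oo)\cdot L^\times$ of the inflation $\Lambda_i$ of a cuspidal representation $\sigma_i$ of $\GL_m(\FF_q)$. Two inputs then finish the argument. First, Murnaghan's computation of the depth-zero germs \cite{Mur03} shows that the coefficients $c_{\mathcal{O}'}(\pi_i')$ are determined by the restriction of the character of $\sigma_i$ to the unipotent set of $\GL_m(\FF_q)$ — for $\mathcal{O}' = \{0\}$ this recovers the formal degree $\deg(\pi_i') = m\cdot\deg(\St_m)$, already known to coincide by the preceding lemma. Second, all cuspidal representations of $\GL_m(\FF_q)$ have the same character values on unipotent elements: each $\sigma_i$ is $\pm R_{T,\theta_i}$ for $T$ the elliptic maximal torus of order $q^m - 1$ (unique up to conjugacy) and $\theta_i$ in general position, and by the Deligne--Lusztig character formula $R_{T,\theta}$ restricted to unipotent elements equals the Green function $Q_T$, which does not depend on $\theta$. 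Hence $c_{\mathcal{O}'}(\pi_1') = c_{\mathcal{O}'}(\pi_2')$ for all $\mathcal{O}'$, and running the two reductions in reverse gives $c_{\mathcal{O}}(\pi_1) = c_{\mathcal{O}}(\pi_2)$ for all $\mathcal{O}$.

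The principal obstacle lies in the supercuspidal step and in the range of validity of the expansion: one must know that for a depth-zero representation the Howe expansion holds pointwise on all of $\pp M_n(\oo)$, and that the germ coefficients of a depth-zero supercuspidal are governed solely by the finite-group datum $\sigma_i|_{\mathrm{unip}}$ rather than by $\theta_i$ itself. Both facts are in the literature (DeBacker for the former, Murnaghan \cite{Mur03} for the latter), but reconciling the various normalizations — of Haar measures, of the Fourier transform on $\mathfrak{gl}_n(L)$, and of $\exp$ versus the Moy--Prasad/Cayley identifications — requires care, as does assembling the parabolic-descent formula for $c_{\mathcal{O}}$ together with the Zelevinsky combinatorics into a single clean statement. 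By contrast, once these normalizations are pinned down nothing in the argument remembers the particular $\pi_i'$, so the equality of characters is forced. (If Murnaghan's paper already records a closed formula for $\Theta_{\Sp(\pi',d)}$ on $\Gamma(\pp)$ depending only on $q$, $m$, and $d$, one may invoke it directly and skip the Grothendieck-group bookkeeping.)
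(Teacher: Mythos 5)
Your argument is essentially correct, but it takes a genuinely different route from the paper. You work with the Harish--Chandra--Howe local character expansion at the identity (nilpotent orbits), invoke DeBacker-type homogeneity to get validity on all of $\pp M_n(\oo_L)$ for depth-zero representations, reduce from $\Sp(\pi_i',d)$ to $\pi_i'$ by Grothendieck-group/Zelevinsky bookkeeping together with parabolic descent of germ coefficients, and finish with Deligne--Lusztig theory: all cuspidal representations of $\GL_m(\FF_q)$ have the same (Green-function) character values on unipotent elements, so the germs of depth-zero supercuspidals cannot see $\theta$. The paper instead uses Murnaghan's asymptotic expansion about the \emph{semisimple} element $s_m$ (Theorem 14.1 of \cite{Mur03}), whose range of validity on $\pp\cdot M_n(\oo_L)$ for depth-zero representations is built into that theorem, together with the depth-zero type $(B_m,\tau)$ and the Hecke algebra isomorphism $\HH(G//B_m,\tau^*)\cong\HH(H//B_H,\one)$ with $H=\GL_d(L_m)$: under this isomorphism $\Sp(\pi',d)$ corresponds to a Steinberg representation of $H$, so the expansion coefficients are (a universal constant times) the $0$-asymptotic coefficients of $\St_H$, manifestly independent of $\pi'$. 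Your route buys a proof from "standard" general machinery without type theory, at the cost of more external inputs (homogeneity for the range of validity, the descent formula for germ coefficients, universality of the Zelevinsky multiplicities, finite-group character theory); the paper's route avoids all the Grothendieck-group combinatorics because the comparison with $\St_H$ handles the generalized Steinberg directly, not just the supercuspidal.

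One link in your chain deserves more care: the assertion that \cite{Mur03} expresses the nilpotent germ coefficients of a depth-zero supercuspidal in terms of $\sigma_i|_{\mathrm{unip}}$ is not what that paper does (its expansions are centered at semisimple elements such as $s_m$); for the statement you want you should either cite the depth-zero germ computations in the literature (Barbasch--Moy, DeBacker--Reeder) or, more simply, bypass germs altogether at this step: for $\gamma$ topologically unipotent the Frobenius formula for the compactly induced representation $\pi_i'=\mathrm{c\text{-}Ind}_{L^\times\GL_m(\oo_L)}^{\GL_m(L)}\Lambda_i$ only ever evaluates $\Lambda_i$ at elements of $\GL_m(\oo_L)$ reducing to unipotent elements of $\GL_m(\FF_q)$, so $\Theta_{\pi_1'}=\Theta_{\pi_2'}$ on the topologically unipotent set directly from the equality of Green functions. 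With that repair (and the normalization caveats you already flag), the proof goes through.
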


This implies $\dim \pi_1^{\Gamma(\pp^r)} = \dim \pi_2^{\Gamma(\pp^r)}$, since $\dim \pi_i^{\Gamma(\pp^r)} = \tr \pi_i(e_{\Gamma(\pp^r)})$, and $e_{\Gamma(\pp^r)}$ is supported on $\Gamma(\pp^r)\leq \Gamma(\pp)$.

We will use a simplified version of the character expansion formula of \cite{Mur03} that is appropriate for our situation.  We begin with some preliminaries.  Let $G = \GL_n(L),\, \fg = \Lie(G)$.  Let $\fg_{\reg},\, G_{\reg}$ denote the set of regular semisimple elements in $\fg,\, G$ respectively.  Fix once and for all an additive character $\psi$ on $L$, and let $\linf{\cdot}{\cdot}$ denote the bilinear form on $\fg$ given by $\linf{X}{Y} = \psi(\tr(XY))$.  Assume further that the Haar measure $dX$ on $\fg$ is self-dual with respect to $\linf{\cdot}{\cdot}$.  Given an $\Ad(G)$ orbit $\OO\subset \fg$, let $\mu_{\OO}(f)$ denote the integral of $f$ over the orbit $\OO$.  If $\wh f$ is the Fourier transform of $f$, we define the distribution $\wh \mu_\OO$ via $\wh \mu_\OO(f) = \mu_\OO(\wh f)$.  The distribution $\wh \mu_\OO$ is representable by a locally integrable function on $\fg$, which by abuse of notation we also call $\wh \mu_\OO$.  In particular
$$\wh \mu_\OO(f) = \mu_\OO(\wh f) = \int_\fg f(X)\wh \mu_\OO(X)\,dX$$
for any $f\in C_c^\infty(\fg)$.

Given a semisimple $s\in \fg$, let $\Gamma_G(s)$ denote the set of orbits $\OO\subset \fg$ that contain $s$ in their closure.

In \cite{Mur03}, Murnaghan gives a description of the character of a depth-zero discrete series representation $\pi = \Sp(
\pi',\, d)$, where $n = dm$.  Let $L'/L$ be the unique unramified extension of degree $n$, and choose a basis $\{\xi_1,\ldots\, \xi_n\}$ of $L'/L$ such that $\xi_i\in \oo_{L'}$ and such that their reductions modulo $\pp_{L'}$ form a basis of $\FF_{L'}$ over $\FF_L$.  This choice of basis defines an embedding $L' \into M_n(L) = \fg$.  For any $m\mid n$, consider the unramified subextension $L_m/L$ of degree $m$; then we get an embedding $L_m\into \fg$.  Choose an element $s_m \in \oo_{L_m} - \pp_{L_m}$ whose reduction modulo $\pp_{L}$ generates $\FF_{L_m}$ over $\FF_L$ and identify it with its image in $\fg$.

\begin{thm} \label{Murnaghan} \begin{enumerate}[(a)]
\item Let $\pi'$ be a depth-zero supercuspidal $\GL_m(L)$ representation and let $\pi = \Sp(\pi',\, d)$.  With $s_m$ as above, there are constants $c_{\OO}$ for $\OO\in\Omega(s_m)$ such that 
$$\Theta_\pi(1 + X) = \sum_{\OO\in \Omega_G(s_m)} c_{\OO,\fg}\cdot \wh \mu_{\OO}(X)$$
for all $X\in \pp \cdot M_n(\oo_L)$.

\item Moreover, the constants $c_{\OO}$ can be chosen as follows: let $H = C_{G}(s_m) \cong \GL_{d}(L_m)$. Then there is bijection $\Omega_{H}(0) \leftrightarrow \Omega_G(s_m)$ given by 
\begin{equation} 
	\OO_{H} \mapsto \Ad(G)\cdot (s_m + \OO_{H}). 
	\label{OrbitBijection}
\end{equation}
Let $\St_H$ denote the Steinberg represenation of $H$; then the character of $\St_H$ has a $0$-asymptotic extension:
$$\Theta_{\St_H}(1 + X) =\sum_{\OO_H \in \Omega_{H}(0)} c_{\OO_H} \cdot \wh \mu_{\OO_H}(X).$$

There is a constant $\lambda$, depending only on a choice of Haar measures on $G,\, H$, such that 
$$c_{\OO} = \lambda c_{\OO_H}$$
if $\OO$ and $\OO_H$ correspond under \ref{OrbitBijection}, assuming measures on $\OO$ and $\OO_H$ are chosen compatibly.
\end{enumerate}
\end{thm}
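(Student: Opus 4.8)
The plan is to obtain both parts by specializing the general $\gamma$-asymptotic character expansion of \cite{Mur03} to $\pi=\Sp(\pi',\,d)$ and unwinding what the general coefficients become in this concrete situation. Recall that $\pi'$ is the depth-zero supercuspidal of $\GL_m(L)$ attached to the admissible pair $(L_m,\,\theta)$, with $L_m/L$ the unramified extension of degree $m$ and $\theta$ trivial on $1+\pp_{L_m}$; its underlying unramified torus is $L_m^\times$, and $s_m$ was chosen in $\oo_{L_m}^\times$ with reduction generating $\FF_{L_m}$ over $\FF_L$, so that $L(s_m)=L_m$ inside $\fg=M_n(L)$. For part~(a) I would simply invoke Murnaghan's theorem: for a depth-zero representation built from a datum of this kind it furnishes an expansion $\Theta_\pi(1+X)=\sum_{\OO}c_\OO\,\wh\mu_\OO(X)$, the sum running over the $\Ad(G)$-orbits whose closure contains $s_m$, valid for $X$ of positive depth, i.e.\ $X\in\pp\,M_n(\oo_L)$. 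The only points needing verification are bookkeeping: that the semisimple element governing the expansion of $\Sp(\pi',\,d)$ is $s_m$ — it is the residue datum carried by $\pi'$, and the passage from $\pi'$ to $\Sp(\pi',\,d)$ leaves it unchanged — and that the radius of validity is exactly the depth-zero radius $\pp\,M_n(\oo_L)$.

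For part~(b) the first ingredient is the orbit parametrization. Since $L'$ is free of rank $d$ over $L_m$, we have $C_G(s_m)=\End_{L_m}(L')^\times\cong\GL_d(L_m)=:H$, and $\OO_H\mapsto\Ad(G)\cdot(s_m+\OO_H)$ is the standard bijection between nilpotent $\Ad(H)$-orbits in $\Lie H$ and $\Ad(G)$-orbits in $\fg$ with $s_m$ in their closure; this is Harish-Chandra descent at the level of orbits and is part of the formalism of \cite{Mur03}. On the level of orbital integrals this descent comes with a Jacobian-type constant: for $X\in\pp\,M_n(\oo_L)$ with $\Lie H$-component $X_H$ one has $\wh\mu_{\Ad(G)(s_m+\OO_H)}(X)=\lambda'\,\wh\mu_{\OO_H}(X_H)$, where $\lambda'$ depends only on the compatibly chosen Haar measures on $G$ and on $H$.

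The substantive point is the coefficient identity $c_\OO=\lambda\,c_{\OO_H}$. Murnaghan's analysis expresses the $c_\OO$ as the Harish-Chandra--Howe coefficients, near the identity of $H$, of an explicit representation $\pi_{s_m}$ of $H=C_G(s_m)$ attached to $\pi$ at $s_m$, multiplied by the orbital-integral factor $\lambda'$ and the residual measure normalizations. The key observation is then that for $\pi=\Sp(\pi_\theta,\,d)$ this $\pi_{s_m}$ is a depth-zero twist of the Steinberg representation of $H$: concretely, automorphic induction from $L_m$ to $L$ sends $\St_{\GL_d(L_m)}\otimes(\theta\circ\det)$ to $\Sp(\pi_\theta,\,d)=\pi$ — on the Galois side $\rec(\pi)=\rho_\theta\otimes\mathrm{sp}(d)$, which by the projection formula is the induction from $W_{L_m}$ of the Weil--Deligne parameter $\mathrm{sp}(d)\otimes\theta$ of $\St_{\GL_d(L_m)}\otimes(\theta\circ\det)$. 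Since $\theta$ is trivial on $1+\pp_{L_m}$, the twist $\theta\circ\det$ is trivial on $1+\pp\,M_d(\oo_{L_m})$, so $\Theta_{\pi_{s_m}}$ and $\Theta_{\St_H}$ agree on that set and hence have the same $0$-asymptotic expansion, the one written in the statement. Substituting this into Murnaghan's formula and absorbing $\lambda'$ together with the measure normalizations into a single constant $\lambda$ yields $c_\OO=\lambda\,c_{\OO_H}$.

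I expect the main obstacle to be this last identification: checking that the general coefficient recipe of \cite{Mur03}, applied to $\Sp(\pi',\,d)$ and restricted to the orbits attached to $s_m$, really does reduce — through the bijection above — to the Steinberg expansion on $H$, and keeping careful enough track of the descent and Haar-measure normalizations that the comparison constant $\lambda$ is genuinely uniform, in particular independent of $\theta$ and of the field $L$ (of large residue characteristic); this uniformity is what is needed when the theorem is fed into Lemma~\ref{CharacterEquality} and Lemma~\ref{equalities}. The orbit descent and the domain-of-validity assertions, by contrast, are routine given \cite{Mur03}.
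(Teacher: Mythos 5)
Your overall strategy is the same as the paper's (specialize Murnaghan's asymptotic expansion about $s_m$, descend to $H=C_G(s_m)\cong\GL_d(L_m)$, identify the relevant $H$-representation as a depth-zero twist of $\St_H$, and conclude that the coefficients are Steinberg coefficients up to a measure constant), but the step you yourself flag as the ``substantive point'' is exactly where your argument has a gap. In Murnaghan's Theorem 14.1 the coefficients $c_{\OO}$ are computed through the type-theoretic correspondence coming from Howe's Hecke algebra isomorphism $\HH(G\,/\,/\,B_m,\,\tau^*)\cong\HH(H\,/\,/\,B_H,\,\one)$, where $(B_m,\,\tau)$ is the totally pure refined minimal $K$-type contained in $\pi$; the $H$-representation $\pi_{s_m}$ entering the coefficient formula is the image of $\pi$ under \emph{that} bijection, not a representation characterized by automorphic induction. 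You instead identify $\pi_{s_m}$ on the Galois side, asserting that it is $\St_{\GL_d(L_m)}\otimes(\theta\circ\det)$ because automorphic induction of this representation recovers $\Sp(\pi_\theta,\,d)$. That is a different correspondence, and you give no argument that it agrees (even up to a twist trivial near $1$) with the Hecke-algebra correspondence that Murnaghan actually uses; without such a compatibility the identity $c_{\OO}=\lambda c_{\OO_H}$ is not established. The paper avoids this entirely: it shows $\pi$ contains $(B_m,\,\tau)$ (by realizing $\tau$ as the pullback to the parahoric $B_m$ of $\kappa_0^{\otimes d}$, $\kappa_0$ the inducing type of $\pi'$), notes that the Hecke algebra isomorphism respects the $L^2$ structures and hence sends discrete series to discrete series, so the image of $\pi$ is an Iwahori-spherical discrete series of $H$, i.e.\ some $\St(\chi_m)$ with $\chi_m$ trivial on $1+\pp_{L_m}$; your final observation that such a twist has the same $0$-asymptotic expansion as $\St_H$ then closes the argument. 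If you replace the automorphic-induction identification by this $L^2$/Iwahori-spherical argument (or supply the compatibility you are implicitly using), your proof matches the paper's.

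A secondary point: in part (a) you dismiss as ``bookkeeping'' the verification that Murnaghan's theorem applies to $\Sp(\pi',\,d)$ with governing element $s_m$. This is precisely the hypothesis one must check: Theorem 14.1 of \cite{Mur03} requires exhibiting the totally pure refined minimal $K$-type $(B_m,\,\tau)$ inside $\pi$, and the identification $s_{\tau,h}=s_m$ is \cite[Lemma 10.9]{Mur03}; saying that $s_m$ is ``the residue datum carried by $\pi'$'' and is unchanged under $\pi'\mapsto\Sp(\pi',\,d)$ is not a verification. Finally, the uniformity of $\lambda$ in $\theta$ and in $L$ that you worry about is not needed for the application: Lemma \ref{CharacterEquality} only compares two representations over the same $L$ with the same $m$ and $d$, and there $\lambda$ depends only on the Haar measures on $G$ and $H$, hence is the same for both.
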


Lemma \ref{CharacterEquality} follows as a corollary, since the above description of the character $\Theta_\pi$ is true for all $\pi = \Sp(\pi',\,d)$ where $\pi'$ is a depth-zero supercuspidal $\GL_m(L)$ representation.

\begin{proof}[Proof of \ref{Murnaghan}] We briefly recall some facts about the theory of types in our situation.  Let $\pi = \Sp(\pi',\,d)$ where $\pi'$ is a depth-zero supercuspidal representation of $\GL_m(L)$.  Then $\pi'$ is compactly induced from a depth-zero $Z\cdot \GL_m(\oo_L)$ representation $\kappa$; let $\kappa_0$ denote its restrictions $\GL_m(\oo_L)$.

Let $P_m \leq \GL_n(\FF_L)$ be the standard parabolic subgroup whose Levi component $M_m$ is isomorphic to $\GL_m(\FF_L)^{\oplus d}$.  Let $B_m \leq \GL_n(\oo_L)$ denote the preimage of $P_m$ under $\GL_n(\oo_L) \twoheadrightarrow \GL_n(\FF_L)$; this is a parahoric subgroup of $\GL_n(L)$.  Moreover, there is a surjection $B_m \to \GL_m(\oo_L)^d$.
Consider $\kappa_0^{\otimes d}$ as a representation of $\GL_m(\oo_L)^{\oplus d}$.  Let $\tau$ be the pullback of $\kappa_0^{\otimes d}$ to $B_m$.  Then $\pi|_{B_m}$ contains an subrepresentation isomorphic to $\tau$.  In this situation we say $\pi$ contains the \emph{totally pure refined minimal $K$-type} $(B_m,\, \tau)$.

Therefore, \cite[Theorem 14.1]{Mur03} applies.  The content of (2) is that $\Theta_\pi$ has an asymptotic expansion about some element $s_{\tau,h} \in \fg$, defined in (1) of that Theorem and that the expansion is valid for $X\in \pp\cdot M_n(\oo_L)$ since $\pi$ has depth zero.  Moreover, if $\pi = \Sp(\pi',\,d)$, then $s_{\tau, h} = s_m$; this is checked in \cite[Lemma 10.9]{Mur03}.

Moreover, we can compute the constants $c_\OO(\pi)$ using (3).  Let $H = \GL_d(L_m)$ and let $B_H\leq H$ denote an Iwahori subgroup.  Then there is a Hecke algebra isomorphism
$$\HH(G\,/\,/\, B_m,\,\tau^*) \cong \HH(H\,/\,/\, B_H,\,\one)$$
(see, for instance, Theorem 10.2 of \cite{Mur03} or Theorem 1.2 of \cite{How85}).
This gives a bijection between the sets of irreducible $G$ representations containing $(B_m,\, \tau)$ and irreducible $H$ representations having a nonzero Iwahori-fixed vector.  Moreover, because the isomorphism respects the $L^2$ structures on $\HH$, it takes discrete-series representations to discrete-series representations.  In particular, $\pi$ corresponds to a Steinberg representation of $H$ under this isomorphism.

Finally, we must show that all Steinberg representations of $H$ having an Iwahori-fixed vector have the same asymptotic expansion.  All such representations are of the form $\St(\chi_m)$ for some character $\chi_m:L_m^\times \to \CC^\times$, that is trivial on $1 + \pp_{L_m}$, and $\St(\chi_m) \cong \chi_m \otimes \St(\one)$.  Since $\chi_m$ is trivial on $1 + \pp_{L_m}$, the characters of $\St(\chi_m)$ and $\St(\one)$ are the same on $\Gamma(\pp_{L_m})$, completing the proof.
\end{proof}

\section{Other classical groups: A roadmap}

In this section, we give a brief outline of a potential proof for other classical groups (for instance, special orthogonal or symplectic groups).  The primary difficulty is that the depth-zero discrete series representations are not as-well understood as those of $\GL_n$ and in particular we are unaware of a proof an analog of \ref{DiscreteSeriesSmallCor}.

Throughout, this section, $F$ will denote a totally real field and $G/F$ a special orthogonal or symplectic group with center $Z$, $\xi$ will denote an irreducible finite dimensional algebraic $G(F_\infty)$ representation and $\chi$ an automorphic character of $Z(\AAA_F)$ of conductor $\ff$ with $\chi_{\infty} = \chi_{\xi}$.  We continue to assume $\xi$ has regular highest weight.  As before, let $\nn$ denote an ideal of $F$ and $\Gamma(\nn)$ the principal congruence subgroup of level $\nn$.

Let $\Fam(\xi,\,\chi,\, \Gamma(\nn))$ denote the family of cuspidal automorphic $\xi$-cohomological representations $\pi$, counted with multiplicity $m_{\cusp}(\pi) \dim(\pi^\infty)^{\Gamma(\nn)}$.

Our conjecture is the following:
\begin{conj} \label{Conjecture} Given $F,\, G,\, \xi,\, \chi$ and $\ff$ as above, let $\{\nn_\lambda\}$ be a sequence of ideals divisible by $\ff$ such that $N(\nn_\lambda) \to \infty$ as $\lambda \to \infty$.  Then
$$\lim_{\lambda\to\infty} 
	\frac{|\Fam^{\leq A}(\xi,\,\chi,\, \Gamma(\nn_\lambda))|}
		{|\Fam(\xi,\,\chi,\, \Gamma(\nn_\lambda))|} = 0.$$
\end{conj}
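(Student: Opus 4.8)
The plan is to mirror the proof of Theorem \ref{MainThm}, isolating the three places where genuinely new input about the representation theory of $G(F_\pp)$ is needed. Throughout we assume, consistently with the standing hypothesis that $\xi$ has regular highest weight and that the problem is non-vacuous, that $G(F_\infty)$ has a maximal torus anisotropic modulo the center (equivalently, that $\xi$-cohomological discrete series exist). \emph{Step 1: Plancherel equidistribution.} Under that hypothesis the Clozel--Delorme Euler--Poincar\'e function $\phi_\xi$ is available, Proposition \ref{RHWProps} holds, and the simple form of Arthur's invariant trace formula (the analog of Theorem \ref{TraceFormula}) holds in the fixed-central-character setting by abelian Fourier analysis exactly as in Section 3. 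Feeding in the same test function $\phi_\nn$ built from $\phi_S$, the idempotent for $\Gamma(\nn^{S})$, and $\phi_\xi$, and invoking Sauvageot's density theorem \cite{Sau97}, the proof of Proposition \ref{PlancherelTheorem} goes through verbatim; the only arithmetic input is the conjugation estimate of Lemma 8.4 of \cite{ST12} applied to $G^{\ad}$, which holds for any semisimple group.

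\emph{Step 2: Reduction to a local estimate.} With Plancherel equidistribution established, Sections 4.1--4.2 transplant with no change: Proposition \ref{Finiteness} holds for every classical group, Proposition \ref{LargePPower} uses only that the maximal nilpotent orbit dimension is strictly less than $\dim(G/Z)$, and the contingent completion of the proof reduces Conjecture \ref{Conjecture} to the analog of Proposition \ref{DiscreteSeriesSmallCor}: for every $\epsilon>0$ and $A\geq 1$ there is $P_0$ so that whenever $\pp$ has residue characteristic $p>P_0$, $G$ is split (or unramified) at $\pp$, and $\chi_\pp$ is unramified, one has
$$\sum_{\substack{\pi\in\Pi^{ds}(G(F_\pp),\,\chi_\pp)\\ [\QQ(\pi):\QQ]\leq A}}\deg(\pi)\cdot\dim\pi^{\Gamma(\pp^r)}\ \leq\ \epsilon\cdot\vol(\Gamma(\pp^r)Z/Z)^{-1}$$
for all $r$, where $\pp$ is chosen so that $G(F_\pp)$ is a split classical group over a $p$-adic field with large residue field.

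\emph{Step 3: The local estimate (the hard part).} This requires three ingredients, each replacing a piece of Sections 5--6. (i) \emph{A lower bound} $[\QQ(\pi):\QQ]\geq (p-1)/c_n$ for discrete series $\pi$ of positive depth: in place of the admissible-pair description one uses the Yu--Kim construction of positive-depth supercuspidals together with depth-preservation under local Langlands (Adler--DeBacker--Spice, Kaletha), noting that a positive-depth datum forces a character of a pro-$p$ group to attain a primitive $p$-th root of unity, exactly as for $\GL_n$. (ii) \emph{A count} of depth-zero discrete series with fixed unramified central character: depth-zero supercuspidals of $G(F_\pp)$ are compactly induced from cuspidal representations of the reductive quotients of parahoric subgroups (Moy--Prasad, Morris), so $\QQ(\pi)$ is controlled via the fields of rationality of Deligne--Lusztig cuspidal representations of finite classical groups; one expects the number with $[\QQ(\pi):\QQ]\leq A$ to be $O_{A}(1)$ while the total grows like a positive power of $q$. (iii) \emph{A multiplicity equality}: for the generalized-Steinberg / Speh-type constituents $\pi_i$ of the discrete series packets attached to depth-zero data $\pi_i'$, one needs $\deg(\pi_1)=\deg(\pi_2)$ and $\dim\pi_1^{\Gamma(\pp^r)}=\dim\pi_2^{\Gamma(\pp^r)}$; the formal-degree equality should follow from the Hiraga--Ichino--Ikeda formal degree conjecture, now known for classical groups, and the fixed-vector equality from an asymptotic character expansion (Murnaghan \cite{Mur03}, Kim--Murnaghan) reducing the character near $1$ to a Steinberg-type character of a smaller group through a depth-zero Hecke-algebra isomorphism.

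The main obstacle is ingredient (iii), and within it the clean reduction of the character of a depth-zero discrete series near the identity to a Steinberg character of a smaller group. For $\GL_n$ this rested on the Bushnell--Kutzko/Howe theory of types and the explicit isomorphism $\HH(G\,/\,/\,B_m,\,\tau)\cong\HH(H\,/\,/\,B_H,\,\one)$ of Iwahori--Hecke type \cite{How85}; for classical groups the relevant depth-zero Hecke algebras (Morris, Goldberg--Roche, Heiermann) need not be Iwahori--Hecke algebras of split groups, so both the character reduction and the bookkeeping of $\deg(\pi)$ and $\dim\pi^{\Gamma(\pp^r)}$ become more delicate. A secondary obstacle is that the endoscopic classification (Arthur, Mok) describes the discrete spectrum of $G$ only in terms of packets, so $m_{\disc}(\pi)$ and the counting with multiplicity in Definition \ref{CountingDef} must be reorganized packet-by-packet via Arthur's multiplicity formula; this is harmless for the vanishing statement but changes the precise shape of the local estimate one must prove.
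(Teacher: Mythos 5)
The statement you are addressing is stated in the paper only as a conjecture: the paper does not prove it, and its entire content on this point is the conditional proposition immediately following it, which reduces Conjecture \ref{Conjecture} to the (unproven) analog of Proposition \ref{DiscreteSeriesSmallCor} for $G$, by rerunning the fixed-central-character Plancherel equidistribution argument (Proposition \ref{PlancherelTheorem}), the finiteness and large-$\pp$-power statements (Propositions \ref{Finiteness} and \ref{LargePPower}), and the subsequence argument of Section 4.2. Your Steps 1 and 2 carry out exactly this reduction and match the paper's route essentially verbatim, including the key external inputs (Clozel--Delorme functions, the simple trace formula, Sauvageot density, Lemma 8.4 of \cite{ST12} applied to $G^{\ad}$, and the fact that the results of \cite{ST13} cover special orthogonal and symplectic groups).

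The gap is Step 3, and it is a genuine one: what you write there is a research plan, not a proof, and it is precisely the missing ingredient the paper itself identifies as open ("we are unaware of a proof of an analog of \ref{DiscreteSeriesSmallCor}"). Concretely, in (ii) the assertion that the number of depth-zero discrete series with $[\QQ(\pi):\QQ]\leq A$ and fixed central character is $O_A(1)$ while the total grows in $q$ is only "expected," not established for finite classical groups via Deligne--Lusztig theory in your text; in (iii) the formal-degree equality is delegated to the Hiraga--Ichino--Ikeda conjecture without verifying that it yields equality of $\deg(\pi)$ \emph{across the relevant depth-zero data with the same inducing type}, and the fixed-vector equality $\dim\pi_1^{\Gamma(\pp^r)}=\dim\pi_2^{\Gamma(\pp^r)}$ requires an analog of Theorem \ref{Murnaghan}, i.e.\ a character expansion near $1$ reduced through a depth-zero Hecke algebra isomorphism, which for classical groups is exactly the obstruction you yourself flag (the relevant Hecke algebras need not be Iwahori--Hecke algebras of split groups). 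There is also the packet/multiplicity issue you mention: for $\GL_n$ the argument exploits that discrete series are parametrized individually and $m_{\disc}$ is $1$ in the relevant situations, whereas for $\mathrm{SO}$ and $\mathrm{Sp}$ the local estimate must be formulated so that it survives Arthur's multiplicity formula; this changes the statement to be proved, not just its proof. So your proposal correctly reproduces the paper's reduction but does not close the conjecture; to count as a proof it would need full arguments for (i)--(iii), which neither you nor the paper supply.
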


We begin with a proposition:
\begin{prop} Let $G$ be a special orthogonal or symplectic group over a totally real field $F$.  Assume the appropriate analog of \ref{DiscreteSeriesSmallCor} holds for $G$.  Then Conjecture \ref{Conjecture} holds.
\end{prop}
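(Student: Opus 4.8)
The plan is to run the proof of Theorem \ref{MainThmIntro} essentially verbatim, with $U_{E/F}(n)$ replaced by $G$ throughout. That proof rested on exactly four inputs: Plancherel equidistribution (Proposition \ref{PlancherelTheorem}), the finiteness result for local components of cohomological representations (Proposition \ref{Finiteness}), the ``large $\pp$-power'' estimate (Proposition \ref{LargePPower}), and Proposition \ref{DiscreteSeriesSmallCor}. The last of these is precisely the hypothesis of the present proposition, so the task reduces to checking that the first three persist for $G$.

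First I would verify Plancherel equidistribution. We are in the setting where $G(F_\infty)$ has a maximal torus anisotropic modulo the center (otherwise there are no $\xi$-cohomological cuspidal representations with $\xi$ of regular highest weight and the statement is vacuous), so Proposition \ref{RHWProps} and Arthur's simple trace formula (Theorem \ref{TraceFormula}) apply unchanged, and the fixed-central-character version follows from abelian Fourier analysis over $Z(\AAA)$ as before. Sauvageot's density theorem and its corollary are group-independent, and the geometric-side input used in the proof of Proposition \ref{PlancherelTheorem} is Lemma 8.4 of \cite{ST12} applied to $G^{\ad}$, which holds for arbitrary $G$; hence $\{\mu^{\cusp}_{\Gamma(\nn_\lambda),\xi,\chi}\}$ satisfies Plancherel equidistribution for $G$ too. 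The finiteness result is Corollary 5.7 of \cite{ST13}, already stated for general reductive $G$. Finally, the proof of Proposition \ref{LargePPower} used only the Harish-Chandra local character expansion together with the bounds $d_{\pi_\pp}\leq d$ (the maximal dimension of a nilpotent orbit in $\Lie G$) and $d < \dim(G/Z)$, the latter valid for any reductive non-abelian $G$; so that estimate transfers verbatim.

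With these in hand the assembly is identical to Section 4. Fix $A$ and $\epsilon$, let $P_0$ be the bound supplied by the assumed analog of Proposition \ref{DiscreteSeriesSmallCor} for $G$, and choose a finite place $\pp$ of $F$, prime to $\ff$, such that $G$ is split over $F_\pp$ and the residue characteristic of $F_\pp$ exceeds $P_0$; such $\pp$ exists by Chebotarev, since $G$ is quasi-split and split over a finite extension of $F$, so the primes that split $G$ have positive density and infinitely many of them lie over rational primes $> P_0$. Let $r_0$ be given by Proposition \ref{LargePPower} for this $\pp$, partition $\{\nn_\lambda\}$ into subsequences $S_0,\ldots,S_{r_0-1},S_{r_0}$ according to whether $\ord_\pp(\nn_\lambda)\in\{0,\ldots,r_0-1\}$ or $\ord_\pp(\nn_\lambda)\geq r_0$, and treat each as in the proof of Theorem \ref{MainThmIntro}: on $S_{r_0}$ the ratio is already $< \epsilon$ by Proposition \ref{LargePPower}, and on $S_i$ with $i < r_0$, Remark \ref{ComparisonRem} together with Plancherel equidistribution bounds the limiting ratio by
$$\vol(\Gamma(\pp^i)Z/Z)\sum_{\substack{\pi\text{ discrete series}\\ [\QQ(\pi):\QQ]\leq A}}\deg(\pi)\cdot\dim\pi^{\Gamma(\pp^i)},$$
which is $< \epsilon$ by the assumed analog of Proposition \ref{DiscreteSeriesSmallCor} (the finitely many non-discrete-series representations in the relevant set contribute $0$ to the Plancherel measure). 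Since there are finitely many subsequences, the limit in Conjecture \ref{Conjecture} vanishes.

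The only genuinely $G$-specific step — and the reason this remains conditional — is the assumed analog of Proposition \ref{DiscreteSeriesSmallCor}: for $\GL_n$ that input rested on the explicit parametrization of depth-zero discrete series by admissible pairs, Murnaghan's asymptotic character expansion, and the Aubert--Plymen formal-degree formula, none of which is presently available in comparable form for split $\mathrm{SO}$ and $\Sp$. A minor point to keep in mind for even orthogonal groups is that one should work with $\mathrm{SO}$ rather than $\mathrm{O}$ and allow $\xi$-cohomological $L$-packets that are not singletons; but since the argument above manipulates only the counting and Plancherel measures, it is insensitive to this.
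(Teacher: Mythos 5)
Your proposal is correct and follows essentially the same route as the paper's own argument: re-establish Plancherel equidistribution, the finiteness result, and the large-$\pp$-power estimate for $G$, then rerun the subsequence argument of Section 4 with the assumed analog of Proposition \ref{DiscreteSeriesSmallCor} as the only new input. The extra details you supply (the role of Lemma 8.4 of \cite{ST12} applied to $G^{\ad}$, the bound $d < \dim(G/Z)$, and the Chebotarev choice of a split place of large residue characteristic) are consistent with, and slightly more explicit than, the paper's sketch.
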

\begin{proof} We may trace through the steps of the proof of Theorem \ref{MainThm}.  First, we replicate the proof of \ref{PlancherelTheorem} (following, once again, the argument in \cite{ST12}) to prove Plancherel equidisitribution for the sequence of measures $\wh\mu^{\cusp}_{\Gamma(\nn_\lambda),\, \chi_S,\, \xi}$. Moreover, since the results in \cite{ST13} hold for special orthogonal and symplectic groups as well, we can prove the analogs of Propositions \ref{Finiteness} and \ref{LargePPower}. 

Now, if we fix $\epsilon$ and $A$, we may pick $\pp$ with large enough norm that the result of \ref{DiscreteSeriesSmallCor} holds and an $r_0$ such that, for any $\nn$ with $\pp^{r_0}\mid \nn$ we have
$$\frac{|\Fam^{\leq A}(\xi,\,\chi,\, \Gamma(\nn))|}
		{|\Fam(\xi,\,\chi,\,\Gamma(\nn))|} < \epsilon.$$
		
Finally, the result follows as in the proof in Subsection 4.2 by breaking the sequence $\{\nn_\lambda\}$ into subsequences and handling each subsequence separately using Plancherel equidistribution.
\end{proof}

\subsection{Fields of rationality of supercuspidal representations}

In this subsection we briefly discuss a potential approach to computing the field of rationality of supercuspidal representations.  We caution the reader that we have not yet checked all the details:

\begin{conj} Let $G/F$ be a connected reductive group, and let $|W|$ denote the cardinality of the Weyl group of a maximal torus in $G \times \bar F$.  Let $\pp$ be a prime such that $F_{\pp}$ has sufficiently high residue characteristic $p$ (depending only on $G$).  If $\pi$ is a supercuspidal representation of $G(F_{\pp})$ of positive depth, then
$$[\QQ(\pi):\QQ] \geq \frac{p-1}{|W|}.$$
\end{conj}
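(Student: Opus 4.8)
The plan is to imitate the argument of Section 5 for $\GL_n$, with the explicit theory of admissible pairs replaced by the general construction of supercuspidal representations. For $p$ sufficiently large relative to $G$, Fintzen's exhaustion theorem says that every positive-depth supercuspidal $\pi$ of $G(F_\pp)$ is isomorphic to Yu's representation $\pi(\Psi)$ attached to a generic cuspidal $G$-datum $\Psi=(\vec{G},y,\vec{r},\rho,(\phi_0,\dots,\phi_d))$; since $\dep(\pi)>0$, at least one of the generic characters occurring in $\Psi$ has positive depth. Fix such a character $\phi$, of depth $r>0$, sitting on a twisted Levi $H\subseteq G$ containing a common maximal torus with $G^0$; its genericity datum is a nonzero element $X^*\in\mathfrak{z}(H)^*(F_\pp)$, and $\phi$ restricted to the Moy--Prasad quotient $H_{y,r}/H_{y,r+}$ equals $\psi\circ\langle X^*,\cdot\,\rangle$ for a fixed additive character $\psi$. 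In the regular case one may instead invoke Kaletha's parametrization of $\pi$ by a pair $(S,\theta)$ with $S$ a tame maximal torus, which makes all of the bookkeeping below immediate with $\theta$ playing the role of $\phi$; the general case proceeds identically with the ``deepest'' generic character.

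First I would record the naturality of these constructions under $\Aut(\CC)$: the group $\Aut(\CC)$ acts on cuspidal data by acting on $\rho$ through its character values and on each $\phi_i$ through its values, and Yu's construction is equivariant for this action up to the contributions of the Heisenberg--Weil representations entering it. Those contributions have character values in a cyclotomic field, and, crucially, they are built from data of depth strictly below $r$, so they do not alter $\phi|_{H_{y,r}/H_{y,r+}}$. Hence for $\sigma\in\Stab(\pi)$ we get ${}^\sigma\pi\cong\pi({}^\sigma\Psi)$, and by the Hakim--Murnaghan equivalence criterion (two data yield isomorphic representations precisely when they are $G(F_\pp)$-conjugate after refactorization) the datum ${}^\sigma\Psi$ is $G(F_\pp)$-conjugate, up to refactorization, to $\Psi$. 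Refactorization changes $\phi$ by a character trivial on $H_{y,r}$, hence does not affect $\bar\phi:=\phi|_{H_{y,r}/H_{y,r+}}$; and a $G(F_\pp)$-conjugation preserving the tower and the point $y$ acts on the genericity datum $X^*\in\mathfrak{z}(H)^*$ through the finite group $N_G(H)(F_\pp)/H(F_\pp)$, which embeds into the Weyl group $W$. Therefore the characters ${}^\sigma\bar\phi$, as $\sigma$ ranges over $\Stab(\pi)$, all lie in a single orbit of size at most $|W|$.

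Next, positivity of the depth forces $\phi$ to be nontrivial on $H_{y,r}/H_{y,r+}$, which for $p$ in the tame range is an elementary abelian $p$-group; so $\bar\phi$ takes a primitive $p$-th root of unity as a value, and the subfield of $\CC$ generated by its values is exactly $\QQ(\zeta_p)$, of degree $p-1$. Let $H_0\le\Gal(\QQ(\zeta_p)/\QQ)\cong(\ZZ/p)^\times$ be the image of $\Stab(\pi)$ under restriction to $\QQ(\zeta_p)$. The assignment $\sigma\mapsto{}^\sigma\bar\phi$ is injective on $H_0$, because a generator of $\QQ(\zeta_p)$ occurs among the values of $\bar\phi$; and by the previous paragraph its image lies in a set of at most $|W|$ characters. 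Hence $|H_0|\le|W|$, so $\QQ(\pi)\cap\QQ(\zeta_p)=\QQ(\zeta_p)^{H_0}$ has degree at least $(p-1)/|W|$ over $\QQ$, and a fortiori $[\QQ(\pi):\QQ]\ge (p-1)/|W|$. (The bound is of course far from sharp — for $\GL_n$ the cyclic structure of $\Gal(\FF_{q^n}/\FF_q)$ improved $|W|=n!$ to $n$ — but it is all one gets from the crude Weyl-group rigidity.)

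The main obstacle is the second step: making the interaction of the $\Aut(\CC)$-action with Yu's construction genuinely precise, in particular controlling the Heisenberg--Weil twists (Weil indices and quadratic Gauss sums) well enough to know both that they lie in a cyclotomic field and, more importantly, that they do not perturb the positive-depth character $\phi$; and verifying that the $G(F_\pp)$-conjugacies and refactorizations permitted by Hakim--Murnaghan move $\bar\phi$ only through the relative Weyl group, so that $|W|$ (rather than something larger) is the correct bound. A secondary issue is pinning down the exact lower bound on $p$ needed for (a) Fintzen's exhaustion theorem, (b) tameness of the relevant Moy--Prasad quotients, and (c) the availability of the genericity hypotheses in Yu's construction; each of these is of the form ``$p$ large depending only on $G$,'' consistent with the statement of the conjecture.
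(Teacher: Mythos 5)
Your proposal follows essentially the same route the paper itself sketches for this statement: exhaustion of positive-depth supercuspidals by Yu's construction for large $p$ (Fintzen in place of the paper's citation of Kim), $\Aut(\CC)$-equivariance of the construction, the Hakim--Murnaghan equivalence criterion to constrain how $\Stab(\pi)$ can move the datum, a bound by $|W|$ on the resulting ambiguity, and the observation that positive depth forces a primitive $p$-th root of unity among the character values, yielding $[\QQ(\pi):\QQ]\geq (p-1)/|W|$. Keep in mind that the paper states this only as a conjecture and explicitly cautions that the details are unchecked --- notably the $\Aut(\CC)$-equivariance of Yu's construction (including the Heisenberg--Weil contributions you highlight) and the verification that conjugation and refactorization move the relevant character only through a set of size at most $|W|$ --- and your write-up leaves exactly these same points open, so it should be regarded as a sketch on par with the paper's roadmap rather than a complete proof.
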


The proof uses analogs by Kim, Murnaghan, and Yu to the results of Howe and Moy for $\GL_n$.  In particular, in \cite{Yu01}, Yu constructs a tame supercuspidal representation associated to a cuspidal datum $\Phi = (\overrightarrow{G}, \, y,\, \overrightarrow{r},\, \rho,\, \overrightarrow{\phi})$ (we refer the reader to Section 3 of \cite{Yu01} for precise definitions).  In \cite{Kim07}, Kim proves that when the residue characteristic is large enough, this construction is exhaustive. 

If $\sigma \in \Aut(\CC)$ and $\Phi = (\overrightarrow{G},\, y,\, \overrightarrow{r},\, \rho,\, \overrightarrow{\phi})$, let $^\sigma \Phi = (\overrightarrow{G},\, y,\, \overrightarrow{r}, \,^\sigma \rho,\, ^\sigma\overrightarrow{\phi})$, where $^\sigma \overrightarrow{\phi} = (^\sigma \phi_0,\ldots,\, ^\sigma \phi_d)$.  It needs to be checked that Yu's construction $\Phi\mapsto \pi$ is equivariant under the action of $\Aut(\CC)$.

Let $\Phi$ be a cuspidal datum and let 
$$\rho' = \rho \otimes \prod_{i = 0}^d (\phi^i)^{-1}|_{K^0}.$$
It follows from Theorem 6.7 of \cite{HM08} (up to a hypothesis on the cuspidal datum) that $\pi_{\Phi_0} = \pi_{\Phi_1}$ if and only if there is a $g\in G(L)$ with $\Ad(g) K_0^0 = K_1^0$, and such that $\Ad(g)$ takes $\rho'_0$ to $\rho'_1$.

Let $W(G)$ be the Weyl group of a maximal torus in $G$ (after base change to the algebraic closure): one can prove that $[N_{G(L)}(K^0): K^0] \leq |W(G)|$.  Now if $\sigma \in \Aut(\CC)$ fixes $\pi_{\Phi}$, then $^\sigma \rho'$ must be equal to $\Ad(x) \rho'$ for some $x\in N_{G(L)}(K^0)/K^0$.  Therefore, we have $\QQ(\pi_\Phi) \subseteq \QQ(\rho'_\Phi)$, and $[\QQ(\rho'_\Phi): \QQ(\pi_\Phi)] \leq |W(G)|$.

\end{document}